\def\marginpar#1{\ignorespaces}
\newtheorem{theorem}[equation]{Theorem}
\newtheorem{proposition}[equation]{Proposition}
\newtheorem{lemma}[equation]{Lemma}
\newtheorem{corollary}[equation]{Corollary}
\newtheorem{definition}[equation]{Definition}
\newtheorem{conjecture}[equation]{Conjecture}
\theoremstyle{definition}
\newtheorem{remark}[equation]{Remark}
\numberwithin{equation}{section}
\def\AArm{\fam0 \rm}%
\newdimen\AAdi%
\newbox\AAbo%
\def\AAk#1#2{\setbox\AAbo=\hbox{#2}\AAdi=\wd\AAbo\kern#1\AAdi{}}%
\newcommand{\BBone}{{\ensuremath{{\AArm 1\AAk{-.8}{I}I}}}}
\def\eqref#1{(\ref{#1})}
\def\eqlabel#1{\def\@currentlabel{#1}}
\def\formula#1{\def\@tempa{#1}\let\@tempb\theequation\def\theequation{%
\hbox{#1}}\def\@currentlabel{(\theequation)}$$}
\def\endformula{\leqno\hbox{(\@tempa)}$$\@ignoretrue\let\theequation\@tempb}
\def\given{\hskip5\p@\relax\vrule\@width.4\p@\hskip5\p@\relax}
\newcommand{\open}[1]{%
\par\normalfont\topsep6\p@\@plus6\p@\trivlist\item[\hskip\labelsep\itshape#1%
\@addpunct{.}]\ignorespaces}
\DeclareRobustCommand{\close}[1]{%
  \ifmmode 
  \else \leavevmode\unskip\penalty9999 \hbox{}\nobreak\hfill
  \fi
  \quad\hbox{$#1$}}
\newlength{\toskip}\settowidth{\toskip}{(\theequation)}
\def\<{\langle}
\def\>{\rangle}
\def \R {{\mathbb R}}
\def \Q {{\mathbb Q}}
\def \G {{\mathbb G}}
\def \E {{\mathbb E}}
\def \L {{\mathbb L}}
\def \Var {\textrm{Var}}
\def \Osc {\textrm{Osc}}
\def \Cov {\textrm{Cov}}
\begin{document}
\date{\today}

\title[KLS]{On the Poincar\'e constant of log-concave measures.}

\author[P. Cattiaux]{\textbf{\quad {Patrick} Cattiaux $^{\spadesuit}$ \, \, }}
\address{{\bf {Patrick} CATTIAUX},\\ Institut de Math\'ematiques de Toulouse. Universit\'e de Toulouse, CNRS UMR 5219. \\ 118 route de Narbonne, F-31062 Toulouse cedex 09.}
\email{cattiaux@math.univ-toulouse.fr}

\author[A. Guillin]{\textbf{\quad {Arnaud} Guillin $^{\diamondsuit}$}}
\address{{\bf {Arnaud} GUILLIN},\\ Laboratoire de Math\'ematiques, CNRS UMR 6620, Universit\'e Blaise Pascal.\\ avenue des Landais, F-63177 Aubi\`ere.} \email{guillin@math.univ-bpclermont.fr}

\maketitle
 \begin{center}

 \textsc{$^{\spadesuit}$  Universit\'e de Toulouse}
\smallskip

\textsc{$^{\diamondsuit}$Universit\'e Blaise Pascal}
\smallskip

 \end{center}

\begin{abstract}
The goal of this paper is to push forward the study of those properties of log-concave measures that help to estimate their Poincar\'e constant. First we revisit E. Milman's result \cite{Emil1} on the link between weak  (Poincar\'e or concentration) inequalities and Cheeger's inequality in the logconcave cases, in particular extending localization ideas and a result of Latala, as well as providing a simpler proof of the nice Poincar\'e (dimensional) bound in the inconditional case. Then we prove alternative transference principle by concentration or using various distances (total variation, Wasserstein). A mollification procedure is also introduced enabling, in the logconcave case, to reduce to the case of the Poincar\'e inequality for the mollified measure. We finally complete the transference section by the comparison of various probability metrics (Fortet-Mourier, bounded-Lipschitz,...).
\end{abstract}
\bigskip

\textit{ Key words :}  Poincar\'e inequality, Cheeger inequality, log-concave measure, total variation, Wasserstein distance, mollification procedure, transference principle.
\bigskip

\textit{ MSC 2000 : .}
\bigskip

\section{\bf Introduction and overview.}\label{Intro}

In the whole paper, for $x\in \mathbb R^n$, $|x|=\left(\sum_{i=1}^n x_i^2\right)^{\frac 12}$ denotes the euclidean norm of $x$, and a function $f$ is said to be $K$-Lipschitz if $\sup_{|x-y|>0} \frac{|f(x)-f(y)|}{|x-y|} \leq K$.
\medskip

Let $\nu$ be a Probability measure defined on $\R^n$. For a real valued function $f$, $\nu(f)$ and $m_\nu(f)$ will denote respectively the $\nu$ mean and a $\nu$ median of $f$, when these quantities exist. We also denote by $$\Var_\nu(f)=\nu(f^2) - \nu^2(f)$$ the $\nu$ variance of $f$. \\ The Poincar\'e constant $C_P(\nu)$ of $\nu$ is defined as the best constant such that $$\Var_\nu(f)\leq C_P(\nu) \, \nu(|\nabla f|^2) \, .$$ In all the paper, we shall denote equally $C_P(Z)$ or $C_P(\nu)$ the Poincar\'e constant for a random variable $Z$ with distribution $\nu$. We say that $\nu$ satisfies a Poincar\'e inequality when $C_P(\nu)$ is finite. \\ It is well known that, as soon as a Poincar\'e inequality is satisfied, the tails of $\nu$ are exponentially small, i.e. $\nu(|x|>R)\leq C \, e^{- \, cR/\sqrt{C_P(\nu)}}$ for some universal $c$ and $C$ (see \cite{BL97}), giving a very useful necessary condition for this inequality to hold. Conversely, during the last eighty years a lot of sufficient conditions have been given for a Poincar\'e inequality to hold. In 1976, Brascamp and Lieb (\cite{BrasLieb}) connected Poincar\'e inequality to convexity by proving the following: if $\nu(dx)=e^{-V(x)} \, dx$, then $$ \Var_\nu(f) \leq \int \, ^t \, \nabla f \, (Hess^{-1}(V)) \, \nabla f \, d\nu$$ where $Hess(V)$ denotes the Hessian matrix of $V$. Consequently, if $V$ is uniformly convex, i.e. $\inf_x \, ^t\xi \, Hess(V)(x) \xi  \geq \rho \, |\xi|^2$ for some $\rho >0$, then $C_P(\nu) \leq 1/\rho$. This result contains in particular the gaussian case, and actually gaussian measures achieve the Brascamp-Lieb bound as it is easily seen looking at linear functions $f$. \\ This result was extended to much more general ``uniformly convex'' situations through the celebrated $\Gamma_2$ theory introduced by Bakry and Emery (see the recent monograph \cite{BGL} for an up to date state of the art of the theory) and the particular uniformly convex situation corresponds to the CD($\rho,\infty)$ curvature-dimension property in this theory. This theory has been recently revisited in \cite{CGsp} by using coupling techniques for the underlying stochastic process. 
\medskip

A particular property of the Poincar\'e inequality is the tensorization property $$C_P(\nu_1\otimes ... \otimes \nu_N) \leq \max_{i=1,...,N} \, C_P(\nu_i) \, .$$  It is of fundamental importance for the concentration of measure and for getting bounds for functionals of independent samples in statistics, due to its ``dimension free'' character. This ``dimension free'' character is captured by the Brascamp-Lieb inequality or the Bakry-Emery criterion, even for non-product measures.

Using a simple perturbation of $V$ by adding a bounded (or a Lipschitz) term, one can show that uniform convexity ``at infinity'' is enough to get a Poincar\'e inequality. This result can also be proved by using reflection coupling (see \cite{Ebe,Ebe2,CGsp}). However in this situation a ``dimension free'' bound for the optimal constant is hard to obtain, as it is well known for the double well potential $V(x) = |x|^4 - |x|^2$. 
\medskip

Uniform convexity (even at infinity) is not necessary as shown by the example of the symmetric exponential measure on the line, $\nu(dx)=\frac 12 \, e^{-|x|}$ which satisfies $C_P(\nu)=4$. In 1999, Bobkov (\cite{bob99}) has shown that any log-concave probability measure satisfies the Poincar\'e inequality. Here log-concave means that $\nu(dx)=e^{-V(x)} \, dx$ where $V$ is a convex function with values in $\mathbb R \cup \{+ \infty\}$. In particular uniform measures on convex bodies are log-concave. We refer to the recent book \cite{Gia} for an overview on the topic of convex bodies, and to \cite{SW} for a survey of log-concavity in statistics. Another proof, applying to a larger class of measures, was given in \cite{BBCG} using Lyapunov functions as introduced in \cite{BCG}. If it is now known that a Poincar\'e inequality is equivalent to the existence of a Lyapunov function (see \cite{CGZ,CGjfa}), this approach is far to give good controls for the Poincar\'e constant.

Actually Bobkov's result is stronger since it deals with the $\L^1$ version of Poincar\'e inequality 
\begin{equation}\label{eqcheegbob}
\nu(|f-\nu(f)|) \leq C_C(\nu) \, \nu(|\nabla f|) \, ,
\end{equation}
which is often called Cheeger inequality. Another form of Cheeger inequality is 
\begin{equation}\label{eqcheegbob1}
\nu(|f-m_\nu(f)|) \leq C'_C(\nu) \, \nu(|\nabla f|) \, .
\end{equation}
Using $$\frac 12 \, \nu(|f-\nu(f)|) \, \leq \, \nu(|f-m_\nu(f)|) \, \leq \, \nu(|f-\nu(f)|) \, ,$$ it immediately follows that $\frac 12 \, C_C \leq C'_C \leq C_C$.\\ It is well known that the Cheeger constant gives a natural control for the isoperimetric function $Is_\nu$ of $\nu$. Recall that for $0\leq u \leq \frac 12$, $$Is_\nu(u) = \inf_{A, \nu(A)=u} \, \nu_s(\partial A)$$ where $\nu_s(\partial A)$ denotes the surface measure of the boundary of $A$. It can be shown that $$Is_\nu(u) = \frac{u}{C'_C(\nu)} \, \geq \frac{u}{C_C(\nu)} \,.$$ The Cheeger inequality is stronger than the Poincar\'e inequality and 
\begin{equation}\label{eqcheegpoinc}
C_P(\nu)\leq 4 \, (C'_C)^2(\nu) \, .
\end{equation}
The first remarkable fact in the log-concave situation is that a converse inequality holds, namely if $\nu$ is log-concave, 
\begin{equation}\label{eqled}
(C'_C)^2(\nu) \leq 36 \, C_P(\nu) \, , 
\end{equation}
as shown by Ledoux (\cite{ledgap} formula (5.8)). \\ Ledoux's approach is using the associated semi-goup with generator $L=\Delta - \nabla V. \nabla$ for which the usual terminology corresponding to the convexity of $V$ is zero curvature. Of course to define $L$ one has to assume some smoothness of $V$. But if $Z$ is a random variable with a log-concave distribution $\nu$ and $G$ is an independent standard gaussian random variable, Prekopa-Leindler theorem ensures that the distribution of $Z+\varepsilon \, G$ is still log concave for any $\varepsilon >0$ and is associated to a smooth potential $V_\varepsilon$. Hence we may always assume that the potential $V$ is smooth provided we may pass to the limit $\varepsilon \to 0$.
\medskip

Log-concave measures deserve attention during the last twenty years in particular in Statistics. They are considered close to product measures in high dimension. It is thus important to get some tractable bound for their Poincar\'e constant, in particular to understand the role of the dimension. 

Of course if $Z=(Z_1,...,Z_n)$ is a random vector of $\mathbb R^n$, $$C_P((\lambda_1 \, Z_1,...,\lambda_n \, Z_n)) \leq \max_i \, \lambda_i^2 \, C_P((Z_1,...,Z_n)) \, ,$$ and the Poincar\'e constant is unchanged if we perform a translation or an isometric change of coordinates. It follows that $$C_P(Z) \, \leq  \, \sigma^2(Z) \, C_P(Z') \,$$ where $\sigma^2(Z)$ denotes the largest eigenvalue of the covariance matrix $Cov_{i,j}(Z) = \Cov(Z_i, Z_j)$, and $Z'$ is an affine transformation of $Z$ which is centered and with Covariance matrix equal to Identity. Such a random vector (or its distribution) is called \underline{isotropic} (or in isotropic position for convex bodies and their uniform distribution). The reader has to take care about the use of the word isotropic, which has a different meaning in probability theory (for instance in Paul L\'evy's work).

Applying the Poincar\'e inequality to linear functions show that $\sigma^2(Z) \leq C_P(Z)$.  In particular, in the uniformly convex situation, $\sigma^2(Z) \leq 1/\rho$ with equality when $Z$ is a gaussian random vector. For the symmetric exponential measure on $\mathbb R$, we also have $\sigma^2(Z)=C_P(Z)$ while $\rho=0$. It thus seems plausible that, even in positive curvature, $\sigma^2(Z)$ is the good parameter to control the Poincar\'e constant. 

The following was conjectured by Kannan, Lov\'{a}sz and Simonovits (\cite{KLS})
\begin{conjecture}\label{conjKLS}{\it (K-L-S conjecture.)}
There exists a universal constant $C$ such that for any log-concave probability measure $\nu$ on
$\R^n$, $$C_P(\nu) \, \leq \, C \, \sigma^2(\nu) \, ,$$ where $\sigma^2(\nu)$ denotes the largest
eigenvalue of the covariance matrix $Cov_{i,j}(\nu) = \Cov_\nu(x_i, x_j)$, or if one prefers, there exists an universal constant $C$ such that any isotropic log-concave probability measure $\nu$, in any dimension $n$, satisfies $C_P(\nu)\leq C$.
\end{conjecture}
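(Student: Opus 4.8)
The plan is to use the reductions recalled in the introduction to turn the conjecture into a purely \emph{concentration} statement, and then to attack that statement by a dimension-recursive (``localization'') argument. By \eqref{eqcheegpoinc} and \eqref{eqled} the Poincar\'e and Cheeger constants of a log-concave $\nu$ are equivalent up to universal factors, so it suffices to bound $C'_C(\nu)$, hence the isoperimetric profile, by $C\,\sigma(\nu)$. By E.\ Milman's theorem (\cite{Emil1}, see also the discussion around \eqref{eqled}) one knows that, for log-concave measures, a linear isoperimetric profile is \emph{equivalent} to any first-moment concentration bound for $1$-Lipschitz functions; combined with the mollification procedure of this paper (which lets us replace $\nu$ by $\nu * \mathcal N(0,\varepsilon I)$ at the cost of controlled constants, so that the potential is smooth and the associated semigroup available), the whole conjecture reduces to the following scale-invariant estimate: if $\nu$ is isotropic and log-concave on $\R^n$, then $\nu(|f-\nu(f)|)\le C$ for every $1$-Lipschitz $f$, with $C$ independent of $n$. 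Equivalently, by transference via the Wasserstein / total-variation comparisons developed later in the paper, it is enough to show that the ``thin-shell width'' of an isotropic log-concave measure, $\Var_\nu(|x|)$, is $O(1)$.

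Next I would carry out the dimension reduction. One route is the classical Kannan--Lov\'asz--Simonovits localization lemma, which reduces the $n$-dimensional integral inequality to its one-dimensional analogue along log-concave ``needles''; since on the line every log-concave measure has Poincar\'e constant comparable to its variance, the only obstruction is whether conditioning $\nu$ onto a generic needle can inflate the second moment beyond $\sigma^2(\nu)$ by a factor growing with $n$. A second, and I think more flexible, route is Eldan-type stochastic localization: run the measure-valued martingale $(\nu_t)$ obtained by tilting $\nu$ along a Brownian path, observe that each $\nu_t$ stays log-concave, and transfer $C_P(\nu)$ to the Poincar\'e constant of the increasingly Gaussian $\nu_t$ through the martingale structure; the control one needs is a bound on $\int_0^t \|\Cov(\nu_s)\|_{\mathrm{op}}\,ds$ on the relevant time scale $t \sim 1/\sigma^2(\nu)$. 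In either formulation the residual quantity is exactly the thin-shell constant, so the two halves of the argument feed each other.

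The hard part, and the reason this is stated as a conjecture rather than a theorem, is precisely closing that loop quantitatively. In the localization picture one must rule out that a typical needle has conditional variance much larger than $\sigma^2(\nu)$; in the stochastic picture one must bound $\int_0^t\|\Cov(\nu_s)\|_{\mathrm{op}}\,ds$ by something sublinear in $1/\sigma^2(\nu)$, which is a Gr\"onwall-type, self-referential estimate fed by the very thin-shell bound one is trying to prove. Consequently I expect no proof along these lines to succeed without one genuinely new analytic input: either (i) a stand-alone thin-shell theorem, $\Var_\nu(|x|)\le C$ for isotropic log-concave $\nu$, with an \emph{absolute} constant (the variance conjecture), after which the machinery above together with the mollification and transference tools of this paper delivers KLS; or (ii) a direct PDE/martingale bound on $\int_0^t\|\Cov(\nu_s)\|_{\mathrm{op}}\,ds$ that beats the trivial $t\,\|\Cov(\nu_0)\|_{\mathrm{op}}$ estimate. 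Absent such an input, the contributions of this paper — the localization extensions, the transference principles by concentration and by total variation / Wasserstein distance, the mollification reduction, and the comparison of Fortet--Mourier and bounded-Lipschitz metrics — should be read as reducing the conjecture to a clean, low-dimensional-looking statement, not as settling it.
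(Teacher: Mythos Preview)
This statement is a \emph{conjecture}, not a theorem; the paper does not prove it and contains no proof to compare against. You correctly recognize this in your final paragraph, so there is no error to point out on that front.

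Your discussion of possible attack routes (classical KLS localization, Eldan stochastic localization, reduction to thin-shell, and the role of the paper's mollification and transference machinery) is accurate and well informed, but it is commentary on the state of the art rather than a proof attempt. The paper's own stance is the same as yours: it presents tools that sharpen and relate various formulations (weak $(1,\infty)$ Poincar\'e, concentration profile, distance-based transference, gaussian mollification) without claiming to close the gap. If anything, be careful not to overstate what the paper's mollification and transference results buy: Theorem~\ref{propsmooth} and the transference principles of Section~\ref{sectransfer} let you move between $\nu$ and nearby or smoothed measures at the cost of universal constants, but they do not by themselves reduce KLS to thin-shell; that reduction is Eldan's and lives outside this paper.
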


During the last years a lot of works have been done on this conjecture. A recent book \cite{Alonbast} is totally devoted to the description of the state of the art. We will thus mainly refer to this book for references, but of course apologize to all important contributors. We shall just mention part of these works we shall revisit and extend.
\medskip

In this note we shall on one hand investigate properties of log-concave measures that help to evaluate their Poincar\'e constant, on the other hand obtain explicit constants in many intermediate results. Let us explain on an example: in a remarkable paper (\cite{Emil1}), E. Milman has shown that one obtains an equivalent inequality if one replaces the energy of the gradient in the right hand side of the Poincar\'e inequality by the square of its Lipschitz norm, furnishing a much less demanding inequality ($(2,+\infty)$ Poincar\'e inequality). The corresponding constant is sometimes called the spread constant. In other words the Poincar\'e constant of a log-concave measure $\nu$ is controlled by its spread constant. In the next section we shall give another proof of this result. Actually we shall extend it to weak forms of $(1,+\infty)$ inequalities. These weak forms allow us to directly compare the concentration profile of $\nu$ with the corresponding weak inequality. We shall also give explicit controls of the constants when one reduces the support of $\nu$ to an euclidean ball as in \cite{Emil1} or a $l^\infty$ ball, the latter being an explicit form of a result by Latala \cite{Latala}. In section \ref{sectransfer} we shall describe several transference results using absolute continuity, concentration properties and distances between measures. The novelty here is that we compare a log-concave measure $\nu$ with another non necessarily log-concave measure $\mu$. For instance, we show that if the distance between $\nu$ and $\mu$ is small enough, then the Poincar\'e constant of $\mu$ controls the one of $\nu$. This is shown for several distances: total variation, Wasserstein, Bounded Lipschitz. Section \ref{molly} is concerned with mollification. The first part is a revisit of results by Klartag \cite{Klarcube}. The second part studies convolution with a gaussian kernel. It is shown that if $\gamma$ is some gaussian measure, $C_P(\nu*\gamma)$ controls $C_P(\nu)$. The proof is based on stochastic calculus. Finally in the last section and using what precedes we show that all the previous distances and the L\'evy-Prokhorov distance define the same uniform structure on the set of log-concave measures independently of the dimension. We thus complete the transference results using distances. Some dimensional comparisons have been done in \cite{Meckes2}.
\bigskip

\section{\bf Revisiting E. Milman's results.}\label{secEmil}

\subsection{$(p,q)$ Poincar\'e inequalities. \\ \\}\label{subsecpq}

Following \cite{Emil1}, the usual Poincar\'e inequality can be generalized
in a $(p,q)$ Poincar\'e inequality, for $1\leq p \leq q \leq +\infty$,
\begin{equation}\label{eqpoincpq}
B_{p,q} \, \nu^{1/p}(|f-\nu(f)|^p) \leq \nu^{1/q}(|\nabla f|^q) .
\end{equation}
\smallskip
For $p=q=2$ we recognize the Poincar\'e inequality and $B^2_{2,2}=1/C_P(\nu)$, and for $p=q=1$ the Cheeger inequality with $B_{1,1}=1/C_C(\nu)$. 

Among all $(p,q)$ Poincar\'e inequalities, the weakest one is clearly the $(1,+\infty)$ one, the strongest
the $(1,1)$ one, we called Cheeger's inequality previously. Indeed for $1\leq p \leq p' \leq q \leq +\infty$ except the case $p=q=+\infty$, one has the
following schematic array between these Poincar\'e inequalities
\begin{center}
$$
\begin{array}{cccc}
& (1,1) & \Rightarrow & (1,q) \\
& \Downarrow & & \Uparrow \\
& (p,p) & \Rightarrow & (p,q)\\
& \Downarrow & & \Uparrow \\
& (p',p') & \Rightarrow & (p',q)\\
\end{array}
$$
\end{center}
The meaning of all these inequalities is however quite unclear except some cases we shall describe below. 

First remark that on $\R^n$,
$$|f(x)-f(a)| \leq
\parallel \nabla f\parallel_\infty \, |x-a|$$
yielding $$\nu(|f-m_\nu(f)|) = \inf_b \, \nu(|f-b|) \leq \inf_a \, \nu^{1/p}(|x-a|^p) \, \parallel |\nabla f|\parallel_\infty \, , $$ so that since
\begin{equation}\label{eqcompare}
\frac 12 \, \nu(|f-\nu(f)|) \, \leq \, \nu(|f-m_\nu(f)|) \, \leq \, \nu(|f-\nu(f)|) \, ,
\end{equation}
the $(p,+\infty)$ Poincar\'e inequality is satisfied as soon as $\nu$ admits a $p$-moment. There is thus no hope for this inequality to be helpful unless we make some additional assumption.
\smallskip

Now look at the $(p,2)$ Poincar\'e inequality ($1\leq p \leq 2$). We may write, assuming that $\nu(f)=0$,
\begin{eqnarray*}
\Var_\nu(f) & \leq & \parallel f\parallel_\infty^{2-p} \, \nu(|f|^p) \leq  \frac{1}{B^p_{p,2}} \, \parallel f\parallel_\infty^{2-p} \,
\nu^{p/2}(|\nabla f|^2)
\end{eqnarray*}
which is equivalent to $$\Var_\nu(f) \, \leq   \, c \, s^{- \, \frac{2-p}{p}} \, \nu(|\nabla f|^2) + s \, \parallel f-\nu(f)\parallel_\infty^{2} \quad \textrm{ for all } s>0 \, ,$$ with $$\frac{1}{B^2_{p,2}} =
\frac 1p \, \left(\frac{(c(2-p))^{2/p} + p^{2/p}}{(c(2-p))^{(2-p)/p}}\right) \, .$$ This kind of
inequalities has been studied under the name of weak Poincar\'e inequalities (see
\cite{rw,BCR2,CGGR}). They can be used to show the $\L^2-\L^\infty$ convergence of the semi-group
with a rate $t^{- p/(2-p)}$.\\ As shown in \cite{rw}, any probability measure $\nu(dx)=e^{-V(x)} \, dx$ such that $V$ is locally bounded, satisfies a weak Poincar\'e inequality. Indeed, using Holley-Stroock perturbation argument w.r.t. the (normalized) uniform measure on the euclidean ball $B(0,R)$, it is easy to see that   $$\Var_\nu(f) \leq \frac{4R^2}{\pi^2} \, e^{Osc_R \, V} \, \nu(|\nabla f|^2) + 2 \, \nu(|x|>R) \, \parallel f-\nu(f)\parallel_\infty^2$$ where $Osc_R \, V = \sup_{|x|\leq R} V(x) - \inf_{|x|\leq R} V(x)$.

It is thus tempting to introduce weak versions of $(p,q)$ Poincar\'e inequalities. \begin{definition}\label{defweakinfty}
We shall say that $\nu$ satisfies a weak $(p,q)$ Poincar\'e inequality if there exists some non increasing non-negative function $\beta$ defined on $]0,+\infty[$ such that for all $s>0$ and all smooth function $f$, $$(\nu(|f-\nu(f)|^p))^\frac 1p \leq \beta(s) \, \parallel |\nabla f|\parallel_q + s \, \Osc( f) \, ,$$
where $\Osc$ denotes the oscillation of $f$.  We shall sometimes replace $\nu(f)$ by $m_\nu(f)$. In particular for $p=1$ and $q=\infty$ we have $$\beta^{med}(s) \leq \beta^{mean}(s) \leq 2 \beta^{med}(s/2) \, .$$
\end{definition}
Of course if $\beta(0)<+\infty$ we recover the $(p,q)$ Poincar\'e inequality. Any probability measure satisfies a peculiar weak $(1,+\infty)$ Poincar\'e inequality, namely
\begin{proposition}\label{propwinftyconc}
Denote by $\alpha_\nu$ the concentration profile of a probability measure $\nu$, i.e. $$\alpha_\nu(r) := \sup \{1 - \, \nu(A+B(0,r)) \, ; \, \nu(A) \geq \frac 12\} \, , \, r>0 \, ,$$ where $B(y,r)$ denotes the euclidean ball centered at $y$ with radius $r$. Then for any probability measure $\nu$ and all $s>0$, $$\nu(|f-m_\nu(f)|) \leq \, \alpha_\nu^{-1}(s/2) \, \parallel |\nabla f|\parallel_\infty + s \, \Osc f \, .$$ and $$\nu(|f-\nu(f)|) \leq 2 \, \alpha_\nu^{-1}(s/4) \, \parallel |\nabla f|\parallel_\infty + s \, \Osc (f) \, ,$$ where $\alpha_\nu^{-1}$ denotes the converse function of $\alpha_\nu$.
\end{proposition}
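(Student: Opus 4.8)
The plan is to convert the abstract concentration inequality for $\nu$ into a concrete tail estimate for $f$ via its Lipschitz constant, and then integrate that estimate, using the (necessarily finite) oscillation of $f$ as a uniform pointwise bound on $|f-m_\nu(f)|$.

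First I would discard the trivial case where a right-hand side is infinite and assume $K:=\parallel|\nabla f|\parallel_\infty<+\infty$ and $\Osc f<+\infty$ (if $K=+\infty$ while $\Osc f<+\infty$ one can still conclude directly, since comparing $f$ to the midpoint $\frac12(\sup f+\inf f)$ gives $\nu(|f-m_\nu(f)|)\le\frac12\Osc f$). Fix a median $m=m_\nu(f)$; because $\nu(f\le m)\ge\frac12$ forces $m\ge\inf f$ and symmetrically $m\le\sup f$, we have $|f-m|\le\Osc f$ everywhere. Now fix $r>0$ and put $A_-=\{f\le m\}$, so $\nu(A_-)\ge\frac12$. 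Since $f$ is $K$-Lipschitz, $f\le m+Kr$ on $A_-+B(0,r)$, hence
$$\nu(f\ge m+Kr)\le 1-\nu(A_-+B(0,r))\le\alpha_\nu(r),$$
and the same argument applied to $A_+=\{f\ge m\}$ gives $\nu(|f-m|\ge Kr)\le 2\,\alpha_\nu(r)$ for every $r>0$.

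Then I would integrate this tail bound. Splitting $\nu(|f-m|)=\nu(|f-m|\wedge Kr)+\nu\big((|f-m|-Kr)^+\big)$, the first term is at most $Kr$, while from $(|f-m|-Kr)^+\le\Osc f\cdot\mathbf 1_{\{|f-m|\ge Kr\}}$ the second term is at most $\Osc f\cdot\nu(|f-m|\ge Kr)\le 2\,\alpha_\nu(r)\,\Osc f$. Therefore $\nu(|f-m_\nu(f)|)\le Kr+2\,\alpha_\nu(r)\,\Osc f$ for all $r>0$. For every $r$ with $\alpha_\nu(r)\le s/2$ this reads $\nu(|f-m_\nu(f)|)\le Kr+s\,\Osc f$; taking the infimum over such $r$, which is exactly $\alpha_\nu^{-1}(s/2)$ under the convention $\alpha_\nu^{-1}(u):=\inf\{r>0:\alpha_\nu(r)\le u\}$, yields the first inequality. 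The mean version then follows by applying the first inequality with $s$ replaced by $s/2$ and combining with $\nu(|f-\nu(f)|)\le 2\,\nu(|f-m_\nu(f)|)$ from \eqref{eqcompare}, noting that $\Osc(f-\nu(f))=\Osc f$.

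I do not anticipate a genuine obstacle; the only point requiring a little care is the meaning of the ``converse function'' $\alpha_\nu^{-1}$. Rather than inverting the non-increasing profile $\alpha_\nu$ pointwise --- which would call for a discussion of its one-sided continuity --- it is cleanest to read the final step as the displayed infimum over admissible radii, a formulation that is insensitive to any such regularity question.
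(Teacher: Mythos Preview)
Your argument is correct and follows essentially the same route as the paper: establish the tail bound $\nu(|f-m_\nu(f)|\ge Kr)\le 2\alpha_\nu(r)$ from the concentration profile, split the expectation at level $Kr$ bounding the large part by $\Osc f$ times the tail probability, and then pass to the mean version via \eqref{eqcompare}. Your treatment is simply more detailed (edge cases, the explicit Lipschitz step, and the careful reading of $\alpha_\nu^{-1}$ as an infimum), but the core idea is identical.
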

\begin{proof}
Due to homogeneity we may assume that $f$ is $1$-Lipschitz. Hence $\nu(|f-m_\nu(f)|>r) \leq 2 \, \alpha_\nu(r)$. Thus 
\begin{eqnarray*}
\nu(|f-\nu(f)|) &\leq& 2 \, \nu(|f-m_\nu(f)|) \leq 2 \, r \, \nu(|f-m_\nu(f)|\leq r) + 2 \, \Osc (f) \, \nu(|f-m_\nu(f)|>r) \\ &\leq& 2r + 4 \, \Osc (f) \, \alpha_\nu(r) \, ,
\end{eqnarray*}
hence the result.
\end{proof}
This trivial result will be surprisingly useful for the family of log-concave measures. Recall the following result is due to E. Milman (see Theorem 2.4 in \cite{Emil1})

\begin{theorem}\label{thmmainemil}[E. Milman's theorem.]
If $d\nu=e^{-V} dx$ is a log-concave probability measure in $\R^n$, there exists a universal
constant $C$ such that for all $(p,q)$ and $(p',q')$ (with $1\leq p \leq q \leq +\infty$ and
$1\leq p' \leq q' \leq +\infty$) $$B_{p,q} \leq C \, p' \, B_{p',q'} \, .$$
\end{theorem}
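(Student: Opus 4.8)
The plan is to split the statement into two parts of very different nature: a soft reduction, valid for any probability measure, to the comparison of the two extreme cases $(1,+\infty)$ and $(1,1)$, and the genuine log-concave input, which is that this comparison holds with a universal constant. For the reduction, note first that since $\nu$ is a probability measure, $r\mapsto\nu^{1/r}(|g|^r)$ is non-decreasing; applying this to $|f-\nu(f)|$ on the left and to $|\nabla f|$ on the right of \eqref{eqpoincpq} gives $B_{p,q}\le B_{1,\infty}$ for every admissible pair $(p,q)$. At the opposite end, the Cheeger inequality $B_{1,1}\,\nu(|g|)\le\nu(|\nabla g|)$ self-improves in the standard way: plugging in $\mathrm{sgn}(g)\,|g|^{p'}$ with $g=f-\nu(f)$, using $|\nabla(\mathrm{sgn}(g)|g|^{p'})|=p'\,|g|^{p'-1}|\nabla f|$, H\"older with conjugate exponents $(p'/(p'-1),p')$, the comparison \eqref{eqcompare} to re-center, and then $B_{p',q'}\ge B_{p',p'}$ (which holds since $\|\,|\nabla f|\,\|_{q'}\ge\|\,|\nabla f|\,\|_{p'}$ for a probability measure), one obtains $B_{1,1}\le C\,p'\,B_{p',q'}$ with $C$ universal; the linear loss in $p'$ here is the \emph{only} source of the factor $p'$ in the theorem. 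Hence everything reduces to the single claim that there is a universal $C$ with $B_{1,\infty}(\nu)\le C\,B_{1,1}(\nu)$ for every log-concave $\nu$, after which $B_{p,q}\le B_{1,\infty}\le C\,B_{1,1}\le C\,p'\,B_{p',q'}$ closes the proof.

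For the log-concave comparison, which reads equivalently $C_C(\nu)\le C/B_{1,\infty}(\nu)$, the point is that the (very weak, first-moment level) concentration recorded in Proposition~\ref{propwinftyconc} --- satisfied by \emph{every} probability measure with profile $\beta\sim\alpha_\nu^{-1}$ --- self-improves to a full Cheeger inequality once $\nu$ is log-concave. Here I would follow a semigroup route in the spirit of Ledoux's proof of \eqref{eqled}. Mollifying, we may assume $V$ smooth (Pr\'ekopa--Leindler keeps log-concavity and one passes to the limit, as in the remark after \eqref{eqled}) and work with the reversible semigroup $P_t=e^{tL}$, $L=\Delta-\nabla V\cdot\nabla$, which has zero curvature, so $|\nabla P_tg|\le P_t|\nabla g|$ and $P_t$ is a Markov contraction on every $L^r(\nu)$. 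For a Borel set $A$ with $\nu(A)\le\undemi$, reversibility and stationarity give the exact identity $\nu(A)-\langle\mathbf 1_A,P_t\mathbf 1_A\rangle_{L^2(\nu)}=\undemi\,\nu(|\mathbf 1_A-P_t\mathbf 1_A|)$, which under zero curvature is $\le c\sqrt t\,\nu^+(\partial A)$ by the classical $CD(0,\infty)$ estimate $\nu(|g-P_tg|)\le c\sqrt t\,\nu(|\nabla g|)$ applied to a smoothing of $\mathbf 1_A$; on the other hand $\langle\mathbf 1_A,P_t\mathbf 1_A\rangle=\|P_{t/2}\mathbf 1_A\|_{L^2(\nu)}^2\le\|P_{t/4}\|_{L^1(\nu)\to L^\infty(\nu)}\,\nu(A)^2$. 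The crux --- the second, essential use of log-concavity --- is the quantitative ergodic bound $\|P_{t/4}\|_{L^1(\nu)\to L^\infty(\nu)}=\sup_{x,y}p_{t/4}(x,y)\le\tfrac32$ for all $t\ge t_\star$, with $\sqrt{t_\star}\le C\,\alpha_\nu^{-1}(c)\le C'/B_{1,\infty}(\nu)$: concentration flattens the heat kernel, but only because convexity forbids the thin necks that would otherwise keep it large. Inserting this, $\tfrac14\nu(A)\le\nu(A)\bigl(1-\tfrac12\nu(A)\bigr)\le c\sqrt{t_\star}\,\nu^+(\partial A)$, so $C_C(\nu)\le C''\sqrt{t_\star}\le C'''/B_{1,\infty}(\nu)$, which is the desired comparison (using \eqref{eqcheegbob}--\eqref{eqcheegbob1} to pass between the mean- and median-centered constants).

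The one genuinely hard point is the heat-kernel flattening estimate $\sup_{x,y}p_t(x,y)\to 1$ with a rate controlled by $\alpha_\nu$ for log-concave $\nu$ --- equivalently, a quantitative $L^1\to L^\infty$ regularization extracted from nothing more than weak concentration together with convexity; this is the technical heart of E. Milman's argument and where essentially all the work lies (managing the possibly infinite oscillation term of Proposition~\ref{propwinftyconc} via a truncation $f\mapsto(f\wedge M)\vee(-M)$ and a limiting argument is routine by comparison). An alternative, closer to the abstract's ``extending localization ideas and a result of Latala'', is to bypass the semigroup and establish the Cheeger inequality for log-concave $\nu$ through the Kannan--Lov\'asz--Simonovits localization (needle) lemma, reducing it to one-dimensional log-concave measures, for which the equivalence of all $(p,q)$-constants --- with exactly the linear $p'$ loss --- is an elementary computation with the distribution function; the obstacle there is that the weak $(1,+\infty)$ right-hand side is not of the bilinear form the localization lemma directly consumes, so one must first reformulate the target inequality before slicing.
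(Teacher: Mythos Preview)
Your soft reduction is correct and matches the paper's framing: the array of implications reduces everything to $(1,\infty)\Rightarrow(1,1)$ with a universal constant, and the $g\mapsto\mathrm{sgn}(g)|g|^{p'}$ trick together with H\"older is exactly what produces the linear $p'$ loss in passing from $B_{1,1}$ to $B_{p',p'}\le B_{p',q'}$.

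The genuine gap is in your argument for the hard step. The claimed ultracontractivity $\parallel P_{t/4}\parallel_{L^1(\nu)\to L^\infty(\nu)}=\sup_{x,y}p_{t/4}(x,y)\le\tfrac32$ for large $t$ is false for log-concave measures. Already for the standard Gaussian (Ornstein--Uhlenbeck semigroup) one computes
$$p_t(x,x)=(1-e^{-2t})^{-n/2}\exp\left(\frac{e^{-t}}{1+e^{-t}}\,|x|^2\right),$$
which is unbounded in $x$ for every $t>0$. Convexity does not forbid this --- the Gaussian potential is as uniformly convex as one could wish. So the bound $\langle\mathbf 1_A,P_t\mathbf 1_A\rangle\le\tfrac32\,\nu(A)^2$ is simply unavailable, and your chain breaks precisely at the step you yourself flag as the ``technical heart''. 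The localization alternative you sketch is, by your own admission, not carried through either.

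The paper's route differs on both log-concave inputs. The semigroup ingredient is not ultracontractivity but the much softer zero-curvature \emph{Lipschitz regularization} $\parallel |\nabla P_tg|\parallel_\infty\le(\pi t)^{-1/2}\parallel g\parallel_\infty$ (see \eqref{eqledgap2}). Combined with Ledoux's duality \eqref{eqledgap} and the $(1,\infty)$ inequality applied to $P_tf$, this yields
$$\nu(|f-\nu(f)|)\le\sqrt{4t/\pi}\;\nu(|\nabla f|)+\frac{1}{B_{1,\infty}\sqrt{\pi t}}\;\Osc(f)\,.$$
The second, decisive, log-concave input is then the \emph{concavity of the isoperimetric profile} on $[0,\tfrac12]$ (Proposition~\ref{propconcav}): testing the above on approximate indicators of a set with $\nu(A)=\tfrac12$ bounds $Is_\nu(\tfrac12)$ from below, and concavity propagates this to all levels (Proposition~\ref{thmconcprofile}), giving $C'_C(\nu)\le\tfrac{16}{\pi}\,B_{1,\infty}^{-1}$ (Theorem~\ref{thmweakmil} with $s=0$). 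No heat-kernel bound enters.
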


Hence in the log-concave situation the $(1,+\infty)$ Poincar\'e inequality implies Cheeger's
inequality, more precisely implies a control on the Cheeger's constant (that any log-concave
probability satisfies a Cheeger's inequality is already well known). We shall revisit this last result in the next subsection.
\bigskip

\subsection{Log-concave Probability measures. \\ \\}\label{subsecconcave}

In order to prove Theorem \ref{thmmainemil} it is enough to show that the $(1,+\infty)$ Poincar\'e
inequality implies the $(1,1)$ one, and to use the previous array. We shall below reinforce E. Milman's result. The proof (as in \cite{Emil1}) lies on the concavity of the isoperimetric profile, namely the following proposition which was obtained by several authors (see \cite{Emil1} Theorem 1.8 for a list)
\begin{proposition}\label{propconcav}
Let $\nu$ be a (smooth) log-concave probability measure on $\mathbb R^n$. Then the isoperimetric profile $u \mapsto Is_\nu(u)$ is concave on $[0,\frac 12]$.
\end{proposition}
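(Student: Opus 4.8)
The plan is to derive, from the second variation of the weighted area, the pointwise bound $Is_\nu''\le 0$ on $(0,1)$; the only property of $\nu=e^{-V}dx$ that enters is convexity of $V$, that is, nonnegativity of the Bakry-Emery Ricci tensor $Hess(V)$ (equivalently, the ``zero curvature'' appearing above in Ledoux's semigroup approach). Since $Is_\nu\ge 0$ and $Is_\nu(0)=0$, concavity on $[0,\tfrac12]$ will follow from concavity on $(0,1)$. As a preliminary reduction I would pass to the case of bounded convex support: restricting $\nu$ to $B(0,R)$, renormalizing, and letting $R\to\infty$ along a sequence for which the resulting (still log-concave, smooth) measures $\nu_R$ satisfy $\nu_s(\partial B(0,R))\to 0$, one checks by a routine argument (truncation of near minimizers, lower semicontinuity of the weighted perimeter, continuity of $Is_\nu$ on $(0,1)$) that $Is_{\nu_R}(u)\to Is_\nu(u)$ for every fixed $u\in(0,1)$; since a pointwise limit of concave functions is concave, it suffices to treat the bounded case, so from now on $\nu$ has bounded convex support $\Omega$.

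On such $\Omega$, geometric measure theory provides, for every $u\in(0,1)$, a set $A_u$ of $\nu$-measure $u$ minimizing the relative perimeter $\nu_s(\partial\,\cdot\,)$; the part $\Sigma_u$ of $\partial A_u$ lying in $\mathrm{int}\,\Omega$ is a smooth hypersurface off a singular set of dimension at most $n-8$ (of large codimension in $\Sigma_u$, hence negligible for the variation formulas below), it has constant weighted mean curvature (that is, $H-\langle\nabla V,N\rangle\equiv h$ on $\Sigma_u$, with $N$ the outward unit normal), and it meets $\partial\Omega$ orthogonally. Flowing $\Sigma_u$ along $N$ at unit speed yields the equidistant competitors $A_{u,t}$, whose $\nu$-measure $v(t)$ satisfies $v(0)=u$ and $v'(t)=\AA(t):=\nu_s(\partial A_{u,t})>0$, so that $v$ is a local diffeomorphism at $0$; set $g:=\AA\circ v^{-1}$. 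Then $g$ is smooth, $g(u)=\AA(0)=\nu_s(\partial A_u)=Is_\nu(u)$, and since $A_{u,v^{-1}(w)}$ has $\nu$-measure $w$ one has $g(w)=\nu_s(\partial A_{u,v^{-1}(w)})\ge Is_\nu(w)$ for $w$ near $u$: thus $g$ is a $C^2$ upper support of $Is_\nu$ touching it at $u$.

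Differentiating $g\circ v=\AA$ twice at $t=0$, and using $\AA'(0)=h\,\AA(0)$ (first variation at a minimizer) together with the Riccati identity $\tfrac{d}{dt}\big(H-\langle\nabla V,N\rangle\big)=-\|\mathrm{II}\|^2-\langle Hess(V)N,N\rangle$ along the unit normal flow (the ambient curvature being zero), one gets
\begin{equation*}
g''(u)\,\AA(0)^2=\AA''(0)-h^2\AA(0)=-\int_{\Sigma_u}\Big(\|\mathrm{II}\|^2+\langle Hess(V)N,N\rangle\Big)\,d\nu_s\le 0,
\end{equation*}
because convexity of $V$ forces $\langle Hess(V)N,N\rangle\ge 0$ (and the extra boundary term generated in $\AA''(0)$ where $\Sigma_u$ meets $\partial\Omega$ is nonnegative as well, by convexity of $\Omega$). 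So at every $u\in(0,1)$ the profile $Is_\nu$ admits a $C^2$ upper support equal to it at $u$ with nonpositive second derivative there; since $Is_\nu$ is lower semicontinuous this forces it to be concave on $(0,1)$ (on any subinterval on which $Is_\nu$ dipped below a chord, a strictly convex perturbation of that chord would be touched from above at an interior minimum, so the $C^2$ upper support there would have nonnegative second derivative, a contradiction). Hence $Is_\nu$ is concave on $(0,1)$, and therefore on $[0,\tfrac12]$; undoing the reduction gives the statement for the original $\nu$.

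The hard part is making this geometric scheme rigorous rather than formal: the existence and interior/boundary regularity of the minimizers $A_u$ in the weighted, compactly supported setting; pushing the first and second variation formulas across the singular set (legitimate thanks to its large codimension); the free-boundary analysis at $\partial\Omega$, including the sign of its contribution; and the convergence of the profiles $Is_{\nu_R}$ in the reduction. As a one-dimensional sanity check, when $n=1$ the isoperimetric sets are half-lines, so $Is_\nu(u)=\min\big(e^{-V(F^{-1}(u))},e^{-V(F^{-1}(1-u))}\big)$ with $F$ the distribution function of $\nu$, and each of $u\mapsto e^{-V(F^{-1}(u))}$ has second derivative $-V''(F^{-1}(u))\,e^{V(F^{-1}(u))}\le 0$; being a minimum of concave functions, $Is_\nu$ is concave.
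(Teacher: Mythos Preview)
The paper does not prove Proposition~\ref{propconcav}; it is stated as a known result, with a reference to \cite{Emil1} Theorem~1.8 for a list of authors who obtained it. So there is no ``paper's own proof'' to compare against.

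That said, your sketch is the standard route from the literature (Bavard--Pansu, Sternberg--Zumbrun, Kuwert, Bayle, Morgan, E.~Milman): existence and regularity of weighted isoperimetric minimizers in a bounded convex domain, the equidistant deformation giving a $C^2$ upper barrier to $Is_\nu$, and the second variation computation via the Riccati identity, whose sign is forced by $Hess(V)\ge 0$ together with convexity of the boundary. The upper-support characterization of concavity you invoke is exactly how one concludes when the profile is a priori only lower semicontinuous. Your honest list of technical points (regularity across the singular set, the free-boundary contribution, stability under the truncation $R\to\infty$) is accurate; these are precisely the places where the cited references do the real work. One small comment: rather than truncating to balls and arguing convergence of profiles, several of the references handle the unbounded case directly by working with the density $e^{-V}$ and using that $V\to+\infty$ forces existence of minimizers; either reduction is fine. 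Your one-dimensional check is correct and a good sanity test.
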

The previous concavity assumption may be used to get some estimates on Poincar\'e and Cheeger constants
\begin{proposition}\label{thmconcprofile}
Let $\nu$ be a probability measure such that $u \mapsto Is_\nu(u)$ is concave on $[0,\frac 12]$. Assume that there exist some $0\leq u \leq 1/2$ and some $C(u)$ such that for any Lipschitz function $f\geq 0$ it holds 
\begin{equation}\label{eqconcprofosc}
\nu(|f-\nu(f)|) \leq C(u) \, \nu(|\nabla f|) \, + \, u \, Osc (f) \, .
\end{equation}
Then for all measurable $A$ such that $\nu(A)\leq 1/2$, $$\nu_s(\partial A) \geq \frac{1 \, - \, 2u}{C(u)} \, \nu(A) \quad \textrm{ i.e.} \quad C'_C(\nu) \leq \frac{ C(u)}{ 1 \, - \, 2u} \, .$$ If we reinforce \eqref{eqconcprofosc} as follows, for some  $0\leq u \leq 1$, $1<p<\infty$ and some $C_p(u)$
\begin{equation}\label{eqconcprofvar}
\nu(|f-\nu(f)|) \leq C_p(u) \, \nu(|\nabla f|) \, + \, u \, \left(\int (f-\nu(f))^pd\nu\right)^{\frac 1p} \, ,
\end{equation}
then for all measurable $A$ such that $\nu(A)\leq 1/2$, $$\nu_s(\partial A) \geq \frac{ 1 \, - \, u}{C_p(u)} \, \nu(A) \quad \textrm{ i.e.} \quad C'_C(\nu) \leq \frac{C_p(u)}{ 1 \, - \, u} \, .$$
\end{proposition}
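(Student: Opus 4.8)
The plan is to test the weak inequalities \eqref{eqconcprofosc} and \eqref{eqconcprofvar} against Lipschitz cut-offs of indicator functions, and then to use the concavity of $Is_\nu$ to promote the resulting weak isoperimetric inequality — which will only be sharp at the single level $\nu(A)=\tfrac12$ — to the claimed bound at every level $\nu(A)\le\tfrac12$.

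\emph{Approximation step.} Fix a measurable set $A$; we may assume $\nu(\partial A)=0$, since otherwise $\nu_s(\partial A)=+\infty$ and there is nothing to prove. For $\varepsilon>0$ set $f_\varepsilon(x)=\max\!\big(0,\,1-\tfrac1\varepsilon\,d(x,A)\big)$: this is a non-negative, $\tfrac1\varepsilon$-Lipschitz function, equal to $1$ on $\bar A$, supported in $A_\varepsilon:=\{d(\cdot,A)<\varepsilon\}$, with $\Osc(f_\varepsilon)=1$ and $|\nabla f_\varepsilon|\le\tfrac1\varepsilon\,\mathbf{1}_{A_\varepsilon\setminus A}$ a.e., hence $\nu(|\nabla f_\varepsilon|)\le\tfrac1\varepsilon\big(\nu(A_\varepsilon)-\nu(A)\big)$. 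Since $f_\varepsilon\to\mathbf{1}_{\bar A}$ pointwise and boundedly, dominated convergence gives $\nu(f_\varepsilon)\to\nu(A)$, $\nu(|f_\varepsilon-\nu(f_\varepsilon)|)\to\int|\mathbf{1}_A-\nu(A)|\,d\nu=2\nu(A)(1-\nu(A))$, and $(\int|f_\varepsilon-\nu(f_\varepsilon)|^p\,d\nu)^{1/p}\to(\int|\mathbf{1}_A-\nu(A)|^p\,d\nu)^{1/p}$. Now specialize to sets with $\nu(A)=\tfrac12$, for which each of these limits equals $\tfrac12$. Choosing $\varepsilon_k\downarrow0$ along which $\tfrac1{\varepsilon_k}(\nu(A_{\varepsilon_k})-\nu(A))\to\nu_s(\partial A)$ and feeding $f_{\varepsilon_k}$ into \eqref{eqconcprofosc} (resp.\ \eqref{eqconcprofvar}) yields, in the limit, $\tfrac12\le C(u)\,\nu_s(\partial A)+u$ (resp.\ $\tfrac12\le C_p(u)\,\nu_s(\partial A)+\tfrac u2$). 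Taking the infimum over all such $A$ gives $Is_\nu(\tfrac12)\ge\frac{1-2u}{2C(u)}$ (resp.\ $Is_\nu(\tfrac12)\ge\frac{1-u}{2C_p(u)}$).

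\emph{Propagation step.} Because $Is_\nu$ is concave on $[0,\tfrac12]$ with $Is_\nu(0)=0$, for $0<v\le\tfrac12$ the convex combination $v=(1-2v)\cdot0+2v\cdot\tfrac12$ yields $Is_\nu(v)\ge 2v\,Is_\nu(\tfrac12)$, so by the approximation step $Is_\nu(v)\ge\frac{(1-2u)\,v}{C(u)}$ (resp.\ $\ge\frac{(1-u)\,v}{C_p(u)}$). Since $\nu_s(\partial A)\ge Is_\nu(\nu(A))$ whenever $\nu(A)\le\tfrac12$, this is precisely the asserted surface-measure bound, and the stated inequality for $C'_C(\nu)$ follows from the relation $Is_\nu(v)=v/C'_C(\nu)$.

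The delicate part is the approximation step, namely verifying that the cut-offs $f_\varepsilon$ genuinely recover the Minkowski boundary measure of $A$ in the limit; this is exactly what the reduction to $\nu(\partial A)=0$, the pointwise control $|\nabla f_\varepsilon|\le\tfrac1\varepsilon\mathbf{1}_{A_\varepsilon\setminus A}$, and the choice of a liminf-realizing subsequence are for. Everything else reduces to dominated convergence and the elementary observation that a concave function vanishing at the origin has non-increasing difference quotients from $0$.
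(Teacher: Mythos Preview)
Your proof is correct and follows essentially the same route as the paper's: approximate $\mathbf 1_A$ by Lipschitz cut-offs, pass to the limit in \eqref{eqconcprofosc} (resp.\ \eqref{eqconcprofvar}) for sets with $\nu(A)=\tfrac12$, and then use the concavity of $Is_\nu$ with $Is_\nu(0)=0$ to propagate the bound to all levels. The only cosmetic difference is that the paper cites Lemma~3.5 of \cite{BH} for the existence of the approximating sequence, whereas you write down the standard distance-based functions $f_\varepsilon=\max(0,1-\tfrac1\varepsilon d(\cdot,A))$ explicitly; these are exactly the functions that lemma produces, so the arguments coincide.
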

\begin{proof}
Let $A$ be some Borel subset with $\nu(A)=\frac 12$. According to Lemma 3.5 in \cite{BH} one can find a sequence $f_n$ of Lipschitz functions with $0\leq f_n\leq 1$, such that $f_n \to \mathbf 1_{\bar A}$ pointwise ($\bar A$ being the closure of $A$) and $\limsup \nu(|\nabla f_n|) \leq \nu_s(\partial A)$. According to the proof of Lemma 3.5 in \cite{BH}, we may assume that $\nu(\bar A)=\nu(A)$ (otherwise $\nu_s(\partial A)=+\infty$). Taking limits in the left hand side of \eqref{eqconcprofosc} thanks to Lebesgue's bounded convergence theorem, we thus obtain $$\nu(|\mathbf 1_A - \nu(A)|) \leq C(u) \, \nu_s(\partial A) + u \, .$$ The left hand side is equal to $2 \nu(A) (1-\nu(A))=\frac 12$ so that we obtain $\nu_s(\partial A) \geq \frac{\frac 12 -u}{C(u)}$. It remains to use the concavity of $Is_\nu$, which yields $Is_\nu(u) \geq 2 \, Is_\nu(\frac 12) \, u$. \\ If we replace \eqref{eqconcprofosc} by \eqref{eqconcprofvar}, we similarly obtain, when $\nu(A)= \frac 12$, $\int(1_A-\nu(A))^pd\nu=(1/2)^p$, so that $\frac 12 \leq C_p(u) \, \nu_s(\partial A) + \frac 12 \, u$ and the result follows similarly.
\end{proof} 
\begin{remark}\label{rem1}
We may replace $\nu(f)$ by $m_\nu(f)$ in \eqref{eqconcprofosc} without changing the proof, since the explicit form of the approximating $f_n$ in \cite{BH} satisfies $m_\nu(f_n) \to \frac 12$. \hfill $\diamondsuit$
\end{remark}
According to Proposition \ref{propconcav}, the previous proposition applies to log-concave measures. But in this case one can weaken the required inequalities
\begin{theorem}\label{thmweakmil}
Let $\nu$ a log-concave probability measure. \\ Assume that there exist some $0\leq s < 1/2$ and some $\beta(s)$ such that for any Lipschitz function $f$ it holds
\begin{equation}\label{eqlogconcosc}
\nu(|f-\nu(f)|) \leq \beta(s) \, \parallel|\nabla f|\parallel_\infty \, + \, s \, Osc (f) \, ,
\end{equation}
respectively, for some $0\leq s<1$ and some $\beta(s)$ 
\begin{equation}\label{eqlogconcvar}
\nu(|f-\nu(f)|) \leq \beta(s) \, \parallel|\nabla f|\parallel_\infty \, + \, s \, \left(\Var_\nu(f)\right)^{\frac 12} \, .
\end{equation}
Then $$ C'_C(\nu) \leq \frac{4 \beta(s)}{\pi \, (\frac 12 -s)^2} \quad \textrm{ resp. } \quad C'_C(\nu) \leq \frac{16 \beta(s)}{ \pi \, (1-s)^2} \, .$$
We may replace $\nu(f)$ by $m_\nu(f)$ in both cases.
\end{theorem}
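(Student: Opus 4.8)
The plan is to run a regularization of the hypothesis through Proposition~\ref{thmconcprofile}. Since $\nu$ is log-concave, Proposition~\ref{propconcav} tells us that $Is_\nu$ is concave on $[0,\frac12]$, so it is enough to deduce from \eqref{eqlogconcosc} (resp.\ from \eqref{eqlogconcvar}) an inequality of the type \eqref{eqconcprofosc} (resp.\ \eqref{eqconcprofvar} with $p=2$) with an explicit constant and some $u<\frac12$ (resp.\ $u<1$); Proposition~\ref{thmconcprofile} then produces the announced bound on $C'_C(\nu)$, and the median statements follow from Remark~\ref{rem1} together with the comparison $\beta^{med}(s)\le\beta^{mean}(s)\le2\beta^{med}(s/2)$ recorded in Definition~\ref{defweakinfty}. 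So the only real work is to replace the demanding $\|\,|\nabla f|\,\|_\infty$ of \eqref{eqlogconcosc}--\eqref{eqlogconcvar} by the harmless $\nu(|\nabla f|)$ of \eqref{eqconcprofosc}--\eqref{eqconcprofvar}, at the price of a small extra multiple of $\Osc(f)$ (resp.\ of $\Var_\nu(f)^{1/2}$).

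To carry this out I would use the $\nu$-symmetric diffusion semigroup $(P_t)_{t\ge0}$ with generator $L=\Delta-\nabla V\cdot\nabla$, which is legitimate after the Gaussian mollification of $V$ explained in the introduction. Log-concavity of $\nu$ is exactly the curvature condition CD$(0,\infty)$, which furnishes (i) that $P_t$ is Markov, leaves $\nu(\cdot)$ invariant, and contracts both $\Osc$ and $\Var_\nu$, and (ii) the reverse local Poincar\'e inequality $2t\,|\nabla P_th|^2\le P_t(h^2)-(P_th)^2$, hence a smoothing bound $\|\,|\nabla P_th|\,\|_\infty\le c_1\,t^{-1/2}\,\Osc(h)$ and, by self-adjointness of $P_t$ and integration by parts along $s\mapsto P_s$, $\nu(|g-P_tg|)\le c_2\,t^{1/2}\,\nu(|\nabla g|)$, for universal $c_1,c_2$. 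Assuming $\nu(f)=0$ and splitting $f=(f-P_tf)+P_tf$, bound the first term by (ii) and the second by applying \eqref{eqlogconcosc} to $P_tf$ together with (i):
$$\nu(|f|)\ \le\ c_2\,t^{1/2}\,\nu(|\nabla f|)+\Big(s+c_1\,\beta(s)\,t^{-1/2}\Big)\,\Osc(f),\qquad t>0 .$$
This is \eqref{eqconcprofosc} with $C(u)=c_2 t^{1/2}$ and $u=u(t)=s+c_1\beta(s)t^{-1/2}$; for $t$ large, $u(t)<\frac12$, and minimizing the output $C(u)/(1-2u)$ of Proposition~\ref{thmconcprofile} over $t$ is an elementary one-variable minimization that yields the announced estimate $C'_C(\nu)\le \frac{4\beta(s)}{\pi(\frac12-s)^2}$.

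For the variance hypothesis \eqref{eqlogconcvar} the same three steps apply, now using $\Var_\nu(P_tf)\le\Var_\nu(f)$ and ending at \eqref{eqconcprofvar} with $p=2$ and a margin $1-u$, so that the second half of Proposition~\ref{thmconcprofile} gives $C'_C(\nu)\le\frac{16\beta(s)}{\pi(1-s)^2}$. I expect the one delicate point — present in both cases, but acute in the variance case — to be the control of $\|\,|\nabla P_tf|\,\|_\infty$: the reverse local Poincar\'e inequality only bounds it pointwise by $(2t)^{-1/2}\big(P_t(f^2)-(P_tf)^2\big)^{1/2}$, a quantity whose supremum is naturally governed by $\Osc(f)$ rather than by $\Var_\nu(f)^{1/2}$. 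For \eqref{eqlogconcosc} this is exactly what is wanted, but for \eqref{eqlogconcvar} it forces one to run the argument not for all Lipschitz $f$ but, as in the proof of Proposition~\ref{thmconcprofile}, directly on the sequence $f_n\to\mathbf 1_{\bar A}$ with $0\le f_n\le1$, $\limsup\nu(|\nabla f_n|)\le\nu_s(\partial A)$ and $\nu(A)=\frac12$, for which $\Osc(P_tf_n)$, $\Var_\nu(P_tf_n)^{1/2}$ and $\|\,|\nabla P_tf_n|\,\|_\infty$ are all of the same order; this matching, rather than any new idea, is what accounts for the slightly larger constants in the variance version. Everything else is semigroup bookkeeping and the optimization already described.
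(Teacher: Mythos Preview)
Your plan is the same as the paper's: regularize with the $\nu$-symmetric semigroup $P_t$, feed $P_tf$ into the hypothesis, use the $L^\infty\!\to$Lip smoothing of $P_t$ to turn the $\|\,|\nabla P_tf|\,\|_\infty$ term into a multiple of $\Osc(f)$, and use the dual $L^1$ bound to turn $\nu(|f-P_tf|)$ into a multiple of $\nu(|\nabla f|)$; then invoke Proposition~\ref{thmconcprofile} and optimize in $t$. Two points are worth flagging. First, the reverse local Poincar\'e inequality only gives $\|\,|\nabla P_t g|\,\|_\infty\le (2t)^{-1/2}\|g\|_\infty$; the paper instead uses the sharper bound $\|\,|\nabla P_t g|\,\|_\infty\le(\pi t)^{-1/2}\|g\|_\infty$ obtained via reflection coupling (see \eqref{eqledgap2}), and this is precisely what produces the factor $4/\pi$ you quote---with your stated ingredient the optimization would give $2$ in place of $4/\pi$. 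Second, for the variance hypothesis the paper does not pass directly to indicators as you suggest but performs a dichotomy on whether $\Var_\nu(f)^{1/2}$ dominates $\Osc(f)$ or not, handling $\|\,|\nabla P_tf|\,\|_\infty$ by $\Osc(f)$ in one case and by $\Var_\nu(f)^{1/2}$ in the other, and checking that the two resulting inequalities \eqref{eqcasvar1}--\eqref{eqcasvar2} yield the same isoperimetric bound at $\nu(A)=\tfrac12$; your shortcut of working only with the approximants $f_n\to\mathbf 1_{\bar A}$ is equally valid and arguably cleaner. Finally, for the median version the paper keeps the constant unchanged by choosing $g=f-m_\nu(P_tf)$ in \eqref{eqledgap} rather than invoking the crude comparison $\beta^{mean}\le 2\beta^{med}(\cdot/2)$, which would cost you a factor.
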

\begin{proof}
In the sequel $P_t$ denotes the symmetric semi-group with infinitesimal generator $L=\Delta - \nabla V.\nabla$. Here we assume for simplicity that $V$ is smooth on the interior of $D=\{V<+\infty\}$ (which is open and convex), so that the generator acts on functions whose normal derivative on $\partial D$ is equal to 0. From the probabilistic point of view, $P_t$ is associated to the diffusion process with generator $L$ normally reflected at the boundary $\partial D$.

A first application of zero curvature is the following, that holds for all $t>0$, 
\begin{equation}\label{eqledgap2}
\parallel
|\nabla P_tg|\parallel_\infty \, \leq \, \frac{1}{\sqrt{\pi t}} \, \parallel g\parallel_\infty \, .
\end{equation}
This result is proved using reflection coupling in Proposition 17 of \cite{CGsp}. With the slightly worse constant $\sqrt{2 t}$, it was previously obtained by Ledoux in \cite{ledgap}. According to Ledoux's duality argument (see (5.5) in \cite{ledgap}), we deduce, with $g=f-\nu(f)$,
\begin{equation}\label{eqledgap}
\nu(|g|) \leq \sqrt{4t/\pi} \, \nu(|\nabla f|) + \nu(|P_t g|) \, .
\end{equation}
Note that $\nu(|P_t g|)=\nu(|P_tf -\nu(P_tf)|)$. Applying \eqref{eqlogconcosc} with $P_tf$, we obtain $$\nu(|f-\nu(f)|) \leq \sqrt{4t/\pi} \, \nu(|\nabla f|) +  \beta(s) \, \parallel 
|\nabla P_tf|\parallel_\infty \, + s \, Osc (P_tf) \, .$$ Applying \eqref{eqledgap2} again, and the contraction property of the semi-group in $\mathbb L^\infty$, yielding $Osc(P_tf)\leq Osc (f)$, we get 
\begin{equation}\label{eqzeroweakcheeg}
\nu(|f-\nu(f)|) \leq \sqrt{4t/\pi} \, \nu(|\nabla f|) + \left(s \, + \frac{\beta(s)}{\sqrt{\pi t}}\right) \,  Osc (f) \, .
\end{equation}
Choose $t =  \frac{4 \, \beta^2(s)}{\pi \, (\frac 12 -s)^2}$. We may apply proposition \ref{eqconcprofosc} (and remark \ref{rem1}) with $u=(s+\frac 12)/2$ which is less than $\frac 12$ and $$C(u)=\frac{4 \beta(s)}{\pi \, (\frac 12 -s)} \, .$$ yielding the result.\\ If we want to deal with the case of $m_\nu(f)$ we have to slightly modify the proof. This time we choose $g=f-m_\nu(P_tf)$ so that, first $\nu(|f-m_\nu(f)|) \leq \nu(|g|)$, second $P_tg=P_tf - m_\nu(P_tf)$, so that we can apply \eqref{eqlogconcosc} with the median. We are done by using remark \ref{rem1}.
\smallskip

Next we apply \eqref{eqlogconcvar} and the contraction property of the semi-group in $\mathbb L^2$. We get $$\nu(|f-\nu(f)|) \leq \sqrt{4t/\pi} \, \nu(|\nabla f|) +  \beta(s) \, \parallel 
|\nabla P_tf|\parallel_\infty \, + s \, \left(\Var_\nu(f)\right)^{\frac 12} \, .$$ But now, either $\Var_\nu(f) \leq \frac 14 \, Osc(f)$ or $\Var_\nu(f) \geq \frac 14 \, Osc(f)$. \\ In the first case we get  $$\nu(|f-\nu(f)|) \leq \sqrt{4t/\pi} \, \nu(|\nabla f|) +  \beta(s) \, \parallel 
|\nabla P_tf|\parallel_\infty \, + \, \frac s2 \, Osc(f) \, ,$$ and as we did before we finally get, for 
\begin{equation}\label{eqcasvar1}
\nu(|f-\nu(f)|) \leq  \, \frac{8 \, \beta(s)}{\pi \, (1-s)} \, \nu(|\nabla f|) +   \, \frac{s+1}{4} \, Osc(f) \, .
\end{equation}
One can notice that $\frac{s+1}{4} < \frac 12$.\\
In the second case, we first have $$\parallel |\nabla P_tf|\parallel_\infty \, \leq \, \frac{2}{\sqrt{\pi t}} \, \left(\Var_\nu(f)\right)^{\frac 12} \, ,$$ so that finally
\begin{equation}\label{eqcasvar2}
\nu(|f-\nu(f)|) \leq  \, \frac{8 \, \beta(s)}{\pi \, (1-s)} \, \nu(|\nabla f|) +   \, \frac{s+1}{2} \, \left(\Var_\nu(f)\right)^{\frac 12} \, .
\end{equation}
Looking at the proof of proposition \ref{thmconcprofile} we see that both situations yield exactly the same bound for the surface measure of a subset of probability $1/2$ i.e the desired result.
\medskip

If $V$ is not smooth we may approximate $\nu$ by convolving with tinny gaussian mollifiers, so that the convolved measures are still log-concave according to Prekopa-Leindler theorem and with smooth potentials. If $X$ has distribution $\nu$ and $G$ is a standard gaussian vector independent of $X$, $\nu_\varepsilon$ will denote the distribution of $X+\varepsilon G$. \\ It is immediate that for a Lipschitz function $f$,
\begin{eqnarray*}
\mathbb E(|f(X+\varepsilon G)-f(X)|) &\leq& \varepsilon \, \parallel |\nabla f|\parallel_\infty \, \mathbb E(|G|) \\ &\leq& \varepsilon \, \sqrt n \, \parallel |\nabla f|\parallel_\infty \, ,
\end{eqnarray*}
so that if $\nu$ satisfies \eqref{eqlogconcosc}, $\nu_\varepsilon$ also satisfies \eqref{eqlogconcosc} with $\beta_\varepsilon(s) = \beta(s) + 2 \varepsilon \, \sqrt n$. We may thus use the result for $\nu_\varepsilon$ and let $\varepsilon$ go to $0$.\\ Assume now that $\nu$ satisfies \eqref{eqlogconcvar}. It holds 
\begin{eqnarray*}
\nu_\varepsilon(|f-\nu_\varepsilon(f)|) &\leq& \nu(|f-\nu(f)|) + 2 \sqrt n \, \varepsilon \, \parallel |\nabla f|\parallel_\infty \\ &\leq& (\beta(s) + 2 \sqrt n \, \varepsilon) \, \parallel |\nabla f|\parallel_\infty \, + \, s \, \left(\Var_\nu(f)\right)^{\frac 12} \, .
\end{eqnarray*}
But, assuming that $\nu(f)=0$ to simplify the notation,
\begin{eqnarray*}
\Var_\nu(f) &=&  \mathbb E(f^2(X+\varepsilon G)) + \mathbb E((f(X+\varepsilon G)-f(X))^2) + 2 \mathbb E(f(X+\varepsilon G)(f(X)-f(X+\varepsilon G))) \\ &\leq& \Var_{\nu_{\varepsilon}}(f) + \left(\mathbb E(f(X+\varepsilon G))\right)^2 + n \, \varepsilon^2 \, \parallel |\nabla f|\parallel^2_\infty + 2 \varepsilon \sqrt n \, \parallel |\nabla f|\parallel_\infty \, \mathbb E(|f(X+\varepsilon G)|)\\&\leq& \Var_{\nu_{\varepsilon}}(f) + 4n \, \varepsilon^2 \, \parallel |\nabla f|\parallel^2_\infty + \, 2 \varepsilon \sqrt n \, \parallel |\nabla f|\parallel_\infty \, \mathbb E(|f(X)|) \, .
\end{eqnarray*}
In particular if $f$ is 1-Lipschitz and bounded by $M$, we get $$\nu_\varepsilon(|f-\nu_\varepsilon(f)|) \leq \left(\beta(s)+2\varepsilon \sqrt n + s(4n\varepsilon^2+ 2M \varepsilon \sqrt n)^{\frac 12}\right) \, + \, s \, \Var_{\nu_{\varepsilon}}(f) $$ i.e. using homogeneity, $\nu_\varepsilon$ also satisfies \eqref{eqlogconcvar} with some $\beta_\varepsilon$, and we may conclude as before. \\ For the median case just remark that $m_{\nu_\varepsilon}(f)$ goes to $m_\nu(f)$ as $\varepsilon$ goes to $0$.
\end{proof}
Using \eqref{eqcheegpoinc} we get similar bounds for $C_P(\nu)$.

\begin{remark}\label{remthmmain}
Of course if $\nu$ satisfies a weak $(1,+\infty)$ Poincar\'e inequality with function $\beta(u)$, we obtain $$C'_C(\nu) \leq \inf_{0\leq s<\frac 12} \, \frac{4 \beta(s)}{\pi \, (\frac 12 -s)^2} \, .$$ Using that $\beta$ is non increasing, it follows that $$C'_C(\nu) \leq \frac{4}{\pi \, (\frac 12 -s_\nu)^4} \quad \textrm{ where } \quad \beta(s_\nu)=\frac{1}{(\frac 12 -s_\nu)^2} \, .$$ We should write similar statements replacing the Oscillation by the Variance. In a sense \eqref{eqlogconcosc} looks more universal since the control quantities in the right hand side do not depend (except the constants of course) of $\nu$. It is thus presumably more robust to perturbations. We shall see this later. Also notice that both \eqref{eqlogconcosc} and \eqref{eqlogconcvar} agree when $s=0$, which corresponds to an explicit bound for the Cheeger constant in E. Milman's theorem. \\ The advantage of \eqref{eqlogconcvar} is that it looks like a deficit in the Cauchy-Schwarz inequality, since we may take $s$ close to 1. \hfill $\diamondsuit$
\end{remark}
Notice that in the non weak framework, a not too far proof (with a functional flavor) is given in \cite{Alonbast} theorem 1.10.
\medskip

\begin{remark}\label{remfortet} Notice that \eqref{eqlogconcosc} is unchanged if we replace $f$ by $f+a$ for any constant $a$, hence we may assume that $\inf f = 0$. Similarly it is unchanged if we multiply $f$ by any $M$, hence we may choose $0\leq f \leq 1$ with $Osc(f)=1$.  \hfill $\diamondsuit$ 
\end{remark}
\medskip

\subsection{Some variations and some immediate consequences.}\label{subsecimmediate}

\subsubsection{Immediate consequences.} \quad Now using our trivial proposition \ref{propwinftyconc} (more precisely the version with the median) we immediately deduce
\begin{corollary}\label{corconc}
For any log-concave probability measure $\nu$, $$C'_C(\nu) \leq \inf_{0<s<\frac 14} \, \frac{16 \, \alpha_\nu^{-1}(s)}{\pi \, (1-4s)^2} \quad \textrm{ and } \quad C_P(\nu) \leq  \inf_{0<s<\frac 14} \, \left(\frac{32 \, \alpha_\nu^{-1}(s)}{\pi \, (1-4s)^2}\right)^2 \, .$$
\end{corollary}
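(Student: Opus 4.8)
The plan is to combine the (trivial) weak $(1,+\infty)$ Poincar\'e inequality of Proposition \ref{propwinftyconc} with the log-concave criterion of Theorem \ref{thmweakmil}; there is essentially no hard step, the content being entirely in those two ingredients. First I would record that, by Proposition \ref{propwinftyconc} in its median form, for every Lipschitz $f$ and every $s>0$ one has $\nu(|f-m_\nu(f)|) \leq \alpha_\nu^{-1}(s/2) \, \parallel |\nabla f|\parallel_\infty + s \, \Osc f$. This is exactly inequality \eqref{eqlogconcosc}, with $\nu(f)$ replaced by $m_\nu(f)$, for the choice $\beta(s)=\alpha_\nu^{-1}(s/2)$. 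Here one only has to be slightly careful that $\alpha_\nu^{-1}$ denotes the (generalized) inverse of the non-increasing concentration profile, so that $\alpha_\nu^{-1}(s/2)$ is precisely the radius used in the proof of Proposition \ref{propwinftyconc}.

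Next I would feed this into Theorem \ref{thmweakmil}, which applies since $\nu$ is log-concave and which explicitly permits the median version (and recall $\beta^{med}\leq\beta^{mean}$ from Definition \ref{defweakinfty}, so the median form is legitimate). For every $0\leq s<1/2$ it yields
$$C'_C(\nu) \leq \frac{4\beta(s)}{\pi(\tfrac12-s)^2} = \frac{4\,\alpha_\nu^{-1}(s/2)}{\pi(\tfrac12-s)^2}\,.$$
Performing the change of variable $s=2t$, so that $0\leq t<1/4$, and using $(\tfrac12-2t)^2=\tfrac14(1-4t)^2$, the right-hand side becomes $\dfrac{16\,\alpha_\nu^{-1}(t)}{\pi(1-4t)^2}$. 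Taking the infimum over $0<t<1/4$ gives the first claimed bound on $C'_C(\nu)$.

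Finally, the bound on the Poincar\'e constant follows from \eqref{eqcheegpoinc}, namely $C_P(\nu)\leq 4(C'_C)^2(\nu)$: since $4\cdot 16^2 = 32^2$, we get $C_P(\nu)\leq\big(\tfrac{32\,\alpha_\nu^{-1}(t)}{\pi(1-4t)^2}\big)^2$ for each $0<t<1/4$, and taking the infimum over $t$ yields the second inequality. The whole argument is bookkeeping: matching the constant $\beta(s)=\alpha_\nu^{-1}(s/2)$, renaming the free parameter, and checking that the median variants of both Proposition \ref{propwinftyconc} and Theorem \ref{thmweakmil} are available — so I do not anticipate any genuine obstacle.
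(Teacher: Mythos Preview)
Your proposal is correct and follows exactly the paper's approach: apply the median version of Proposition \ref{propwinftyconc} to obtain $\beta(s)=\alpha_\nu^{-1}(s/2)$, feed this into the median version of Theorem \ref{thmweakmil}, and then perform the change of variable $s=2t$ and use \eqref{eqcheegpoinc} for the Poincar\'e bound. The paper states this in one line (``using our trivial proposition \ref{propwinftyconc} \ldots we immediately deduce''), and your write-up simply spells out the bookkeeping.
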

The fact that the concentration profile controls the Poincar\'e or the Cheeger constant of a log-concave probability measure was also discovered by E. Milman in \cite{Emil1}. Another (simpler) proof, based on the semi-group approach was proposed by Ledoux (\cite{led10}). We shall not recall Ledoux's proof, but tuning the constants in this proof furnishes worse constants than ours. The introduction of the weak version of the $(1,+\infty)$ Poincar\'e inequality is what is important here, in order to deduce such a control without any effort.

A similar and even better result was obtained by E. Milman in Theorem 2.1 of \cite{Emil3}, namely $$C'_C(\nu) \leq \, \frac{\alpha_\nu^{-1}(s)}{1-2s}$$ that holds for all $s<\frac 12$. The proof of this result lies on deep geometric results (like the Heintze-Karcher theorem) while ours is elementary. In a sense it is the semi-group approach alternate proof mentioned by E. Milman after the statement of its result.
\medskip

Also notice that the previous corollary gives a new proof of Ledoux's result \eqref{eqled} but with a desperately worse constant. Indeed if we combine the previous bound for $C'_C$ and some explicit estimate in the Gromov-Milman theorem (see respectively \cite{Troy,Beres}) i.e. $$\alpha_\nu(r) \leq 16 \, e^{-r/\sqrt{2C_P}} \quad \textrm { or } \quad \alpha_\nu(r) \leq  \, e^{-r/3 \sqrt{C_P}} \, ,$$ we obtain $(C'_C)^2(\nu) \leq \, m \, C_P(\nu)$ for some $m$. The reader will check that $m$ is much larger than 36.
\medskip

But actually one can recover Ledoux's result in a much more simple way: indeed $$\nu(|f-\nu(f)|) \leq \sqrt{\nu(|f-\nu(f)|^2)} \leq \sqrt{C_P(\nu)} \, \nu^{\frac 12}(|\nabla f|^2) \leq \sqrt{C_P(\nu)} \, \parallel |\nabla f|\parallel_\infty$$ furnishes, thanks to Theorem \ref{thmweakmil}, 
\begin{proposition}\label{propmieuxledoux}
If $\nu$ is a log-concave probability measure, $$C'_C(\nu) \leq C_C(\nu) \leq \frac{16}{\pi} \, \sqrt{C_P(\nu)} \, .$$
\end{proposition}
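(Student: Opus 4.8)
\emph{Proof plan.} The idea is to show that a log-concave $\nu$ with finite Poincar\'e constant automatically satisfies a \emph{genuine} $(1,+\infty)$ Poincar\'e inequality with constant $\sqrt{C_P(\nu)}$, and then to invoke Theorem~\ref{thmweakmil} at the endpoint $s=0$.

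First I would record the elementary chain, valid for every smooth (hence, by approximation, every Lipschitz) $f$: applying the Cauchy--Schwarz inequality, then the Poincar\'e inequality, and finally the pointwise bound $\nu(|\nabla f|^{2})\le\parallel|\nabla f|\parallel_{\infty}^{2}$,
$$\nu(|f-\nu(f)|)\ \le\ \bigl(\Var_\nu(f)\bigr)^{1/2}\ \le\ \sqrt{C_P(\nu)}\ \nu^{1/2}(|\nabla f|^{2})\ \le\ \sqrt{C_P(\nu)}\ \parallel|\nabla f|\parallel_{\infty}.$$
By \eqref{eqcompare} the same inequality holds with $m_\nu(f)$ in place of $\nu(f)$. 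Thus $\nu$ satisfies \eqref{eqlogconcosc}, and its median version, with the constant function $\beta\equiv\sqrt{C_P(\nu)}$; since a constant function is in particular non-increasing, the value $s=0$ with $\beta(0)=\sqrt{C_P(\nu)}$ is admissible, and there is no weak remainder left to be absorbed. The only step worth stressing is the last inequality $\nu^{1/2}(|\nabla f|^{2})\le\parallel|\nabla f|\parallel_{\infty}$, which upgrades the trivial $\mathbb L^{2}$ bound to an $\mathbb L^{\infty}$ one and is precisely what permits taking $s$ all the way down to $0$.

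Then I would feed this into Theorem~\ref{thmweakmil}, which applies because $\nu$ is log-concave: its conclusion with $s=0$ reads
$$C'_C(\nu)\ \le\ \frac{4\,\beta(0)}{\pi\,(\tfrac12-0)^{2}}\ =\ \frac{16}{\pi}\,\sqrt{C_P(\nu)},$$
and since that theorem allows $\nu(f)$ to be replaced by $m_\nu(f)$ it delivers, with the same constant, the corresponding bound on the Cheeger constant. The remaining inequality $C'_C(\nu)\le C_C(\nu)$ of the statement is immediate from \eqref{eqcompare}, so altogether $C'_C(\nu)\le C_C(\nu)\le\frac{16}{\pi}\sqrt{C_P(\nu)}$.

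I do not expect a serious obstacle: the argument is a one-line reduction to Theorem~\ref{thmweakmil}, and the only genuinely non-trivial ingredient — the concavity of the isoperimetric profile of a log-concave measure — is already built into that theorem through Propositions~\ref{propconcav} and~\ref{thmconcprofile}. What does require a moment's care is the bookkeeping at the endpoint: one must check that $s=0$ is allowed in Theorem~\ref{thmweakmil} and that substituting $s=0$, $\beta(0)=\sqrt{C_P(\nu)}$, into $\tfrac{4\beta(s)}{\pi(1/2-s)^{2}}$ produces exactly $\tfrac{16}{\pi}$. If one prefers to steer clear of the endpoint, one may instead take any $s\in(0,\tfrac12)$ with $\beta(s)=\sqrt{C_P(\nu)}$, apply Theorem~\ref{thmweakmil}, and let $s\downarrow 0$, which recovers the same constant.
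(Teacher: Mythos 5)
Your proof is correct and is exactly the paper's argument: the chain Cauchy--Schwarz, then Poincar\'e, then the sup-norm bound on the gradient gives \eqref{eqlogconcosc} with the constant function $\beta\equiv\sqrt{C_P(\nu)}$, and Theorem~\ref{thmweakmil} at the admissible endpoint $s=0$ yields $4\beta(0)/(\pi(1/2)^2)=\frac{16}{\pi}\sqrt{C_P(\nu)}$. The only caveat (shared with the paper, so not a defect of your write-up relative to it) is that Theorem~\ref{thmweakmil} literally bounds the median constant $C'_C$, whereas the claimed bound on the mean constant $C_C$ with the \emph{same} factor $16/\pi$ is not separately derived --- in general one only knows $C_C\le 2C'_C$.
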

Since $16/\pi < 6$ this result is slightly better than Ledoux's result recalled in \eqref{eqled}.
\medskip

\begin{remark}\label{remklsmed} Another immediate consequence is the following: since $$|f(x)-f(a)| \leq
\parallel |\nabla f|\parallel_\infty \, |x-a|$$ we have for all $a$, $$\nu(f-m_\nu(f)) = \inf_b \, \int (|f(x)-b|) \nu(dx) \leq \int |f(x)-f(a)| \nu(dx) \leq \parallel|\nabla f|\parallel_\infty \, \int |x-a| \nu(dx) \, .$$ Taking the infimum with respect to $a$ in the right hand side, we thus have 
\begin{equation}\label{eqKLSwidth}
\nu(f-m_\nu(f)) \leq \parallel|\nabla f|\parallel_\infty \, \int |x-m_\nu(x)| \nu(dx) \, \leq \parallel|\nabla f|\parallel_\infty \, \int |x-\nu(x)| \nu(dx) \, .
\end{equation}
A stronger similar result (credited to Kannan, Lov\'{a}sz and Simonovits \cite{KLS}) is mentioned in \cite{Alonbast} p.11, namely 
\begin{equation}\label{eqKLSvar}
\Var_\nu(f) \leq 4 \, \Var_\nu(x) \, \nu(|\nabla f|^2) \, ,
\end{equation}
where $\Var_\nu(x) = \nu (|x -\nu(x)|^2)$.\\
According to \eqref{eqKLSwidth} $$C'_C(\nu) \leq \, \frac{16}{\pi}  \, \int |x-m_\nu(x)| \, d\nu \, \leq \, \frac{16}{\pi} \, \Var^{1/2}_\nu(x) \, .$$ In particular Since $16/\pi < 5,2$, a consequence is the bound $C_P(\nu) \leq  484 \, \Var_\nu(x)$.\\ Notice that this result contains ``diameter'' bounds, i.e. if the support of $\nu$ is compact with diameter $D$, $C'_C(\nu) \leq \frac{16 \, D}{\pi}$. In the isotropic situation one gets $C'_C(\nu) \leq \frac{16 \, \sqrt n}{\pi} \, .$ \hfill $\diamondsuit$
\end{remark}

\begin{remark}\label{remkls2} \quad Consider an isotropic log-concave random vector $X$ with distribution $\nu$. If $f$ is a Lipschitz function we have for all $a$,
\begin{eqnarray}\label{eqsphere}
\nu(|f-a|) &\leq& \mathbb E\left[\left|f(X) - f\left(\sqrt n \, \frac{X}{|X|}\right)\right|\right] + \mathbb E\left[\left|a - f\left(\sqrt n \, \frac{X}{|X|}\right)\right|\right] \nonumber \\ &\leq& \parallel |\nabla f| \parallel_\infty \, \mathbb E\left[\left| |X| - \sqrt n\right|\right] + \mathbb E\left[\left|a - f\left(\sqrt n \, \frac{X}{|X|}\right)\right|\right] \, .
\end{eqnarray}
Hence, if we choose $a=\mathbb E\left[f\left(\sqrt n \, \frac{X}{|X|}\right)\right]$ and if we denote by $\nu_{angle}$ the distribution of $X/|X|$ we obtain 
\begin{equation}\label{eqsphere2}
\nu(|f-m_\nu(f)|) \leq 2 \, \parallel |\nabla f| \parallel_\infty \, \left(\mathbb E\left[\left| |X| - \sqrt n\right|\right] + \sqrt n \, \sqrt{C_P(\nu_{angle})}\right).
\end{equation}
This shows that the Cheeger constant of $\nu$ is completely determined by the concentration of the radial part of $X$ around $\sqrt n$ (which is close to its mean), and the Poincar\'e constant (we should also use the Cheeger constant) of $X/|X|$. 

In particular, if $\nu$ is spherically symmetric, the distribution of $X/|X|$ is the uniform measure on the sphere $S^{n-1}$ which is known to satisfy (provided $n\geq 2$) a Poincar\'e inequality with Poincar\'e constant equal to $1/n$ for the usual euclidean gradient (not the riemanian gradient on the sphere). In addition, in this situation its known that $$\mathbb E\left[\left| |X| - \sqrt n\right|\right] \leq 1 \quad \quad \textrm{ see \cite{bobsphere} formula (6)}$$ so that we get that $C'_C(\nu) \leq \frac{64}{\pi}$ for an isotropic radial log-concave probability measure. Since $\frac{16}{\pi} \sqrt {12} < \frac{64}{\pi}$, this result is worse than the one proved in \cite{bobsphere} telling that $C_P(\nu) \leq 12$ so that $C'_C(\nu) \leq \frac{16}{\pi} \sqrt {12}$ thanks to proposition \ref{propmieuxledoux} ($12$ may replace the original $13$ thanks to a remark by N. Huet \cite{Nolwen}). Actually, applying \eqref{eqKLSvar} in dimension $1$, it seems that we may replace $12$ by $4$. \hfill $\diamondsuit$
\end{remark}
\medskip

\subsubsection{Variations: the $\mathbb L^2$ framework.} \quad In some situations it is easier to deal with variances.  We recalled that any (nice) absolutely continuous probability measure satisfies a weak $(2,2)$ Poincar\'e inequality. 
\begin{theorem}\label{thmweakmil2}
Let $\nu$ a log-concave probability measure satisfying the weak $(2,2)$ Poincar\'e inequality, for some $s<\frac 16$, $$\Var_\nu(f) \leq \beta(s) \nu(|\nabla f|^2) + s \, Osc^2(f) \, .$$ Then $$C'_C(\nu) \leq \frac{4 \, \sqrt{\beta(s) \ln 2}}{1-6s} \quad \textrm{ and} \quad C_P(\nu) \leq 4 \, (C'_C(\nu))^2 \, .$$
\end{theorem}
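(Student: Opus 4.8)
The strategy parallels the proof of Theorem \ref{thmweakmil}, but working with variances and the $\mathbb L^2$ semi-group smoothing instead of the $\mathbb L^\infty$ gradient bound. Let $P_t$ be the semi-group with generator $L=\Delta-\nabla V.\nabla$, which has zero curvature since $V$ is convex (as before, we may assume $V$ smooth by the Gaussian mollification argument at the end of the proof of Theorem \ref{thmweakmil}). The first ingredient is the standard semi-group interpolation formula: for $g$ with $\nu(g)=0$,
\begin{equation*}
\Var_\nu(g) = -\int_0^\infty \frac{d}{dt}\,\nu((P_tg)^2)\,dt = 2\int_0^\infty \nu(|\nabla P_tg|^2)\,dt,
\end{equation*}
together with the zero-curvature commutation $|\nabla P_t g|^2 \le P_t(|\nabla g|^2)$, which gives $\nu(|\nabla P_tg|^2)\le \nu(|\nabla g|^2)$. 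More usefully, the second ingredient is the $\mathbb L^\infty$-regularization of the gradient coming from zero curvature, namely \eqref{eqledgap2}: $\parallel|\nabla P_t g|\parallel_\infty \le \frac{1}{\sqrt{\pi t}}\parallel g\parallel_\infty$, so that for a $1$-Lipschitz bounded $g$ we get $\parallel|\nabla P_t g|\parallel_\infty\le\frac{1}{\sqrt{\pi t}}\,\Osc(g)$ after centering.

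Now I would run the following scheme. Given a Lipschitz $f$, apply the weak $(2,2)$ inequality to $P_tf$:
\begin{equation*}
\Var_\nu(P_tf) \le \beta(s)\,\nu(|\nabla P_tf|^2) + s\,\Osc^2(P_tf) \le \beta(s)\,\nu(|\nabla f|^2) + s\,\Osc^2(f),
\end{equation*}
using $\nu(|\nabla P_tf|^2)\le\nu(|\nabla f|^2)$ and the $\mathbb L^\infty$-contraction $\Osc(P_tf)\le\Osc(f)$. On the other hand, by the interpolation formula applied to $g=f-\nu(f)$,
\begin{equation*}
\Var_\nu(f)-\Var_\nu(P_tf) = 2\int_0^t \nu(|\nabla P_sf|^2)\,ds.
\end{equation*}
To turn this into something governed by $\nu(|\nabla f|^2)$ in the Cheeger step I would rather pass to the $\mathbb L^1$ (Cheeger-type) statement: as in \eqref{eqledgap}, Ledoux's duality gives $\nu(|f-\nu(f)|)\le\sqrt{4t/\pi}\,\nu(|\nabla f|) + \nu(|P_tf-\nu(P_tf)|)$, and then bound $\nu(|P_tf-\nu(P_tf)|)\le\Var_\nu(P_tf)^{1/2}$ and feed in the weak $(2,2)$ estimate above to get, for a $1$-Lipschitz $f$ with $\Osc(f)=1$,
\begin{equation*}
\nu(|f-\nu(f)|) \le \sqrt{4t/\pi}\,\nu(|\nabla f|) + \bigl(\beta(s)\,\nu(|\nabla f|^2) + s\bigr)^{1/2}.
\end{equation*}
The point of \eqref{eqledgap2} (the $\mathbb L^\infty$ gradient bound) is to control $\nu(|\nabla P_tf|^2)\le\parallel|\nabla P_tf|\parallel_\infty^2\le\frac{1}{\pi t}$, which lets one replace the awkward $\nu(|\nabla f|^2)$ term; combining the two ways of bounding the remainder and optimizing should produce, after choosing $t$ of order $\beta(s)\ln 2$ (the $\ln 2$ entering exactly because $\Var_\nu(P_tf)$ decays like $e^{-2t/C_P}$ type factors and one wants to kill half of the variance — more precisely one uses that the heat flow halves the oscillation-normalized variance on a time scale $\sim\beta\ln2$), a bound of the shape $\nu(|f-\nu(f)|)\le C(u)\,\nu(|\nabla f|) + u\,\Osc(f)$ with $u = (1+6s)/4$ roughly and $C(u)\asymp\sqrt{\beta(s)\ln2}$. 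Then Proposition \ref{thmconcprofile} together with the concavity of the isoperimetric profile (Proposition \ref{propconcav}) converts this into $C'_C(\nu)\le C(u)/(1-2u) = \frac{4\sqrt{\beta(s)\ln2}}{1-6s}$, and finally \eqref{eqcheegpoinc} gives $C_P(\nu)\le 4(C'_C(\nu))^2$.

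The main obstacle is the bookkeeping in the optimization step: one has two competing estimates for the "remainder" $\nu(|P_tf-\nu(P_tf)|)$ — one in terms of $\nu(|\nabla f|^2)$ (which is not directly comparable to the $\nu(|\nabla f|)$ in front) and one in terms of $\parallel|\nabla P_tf|\parallel_\infty^2\le 1/(\pi t)\cdot\Osc^2(f)$ via \eqref{eqledgap2} — and one must split on whether $\nu(|\nabla f|^2)$ is large or small (as in the two-case analysis leading to \eqref{eqcasvar1}–\eqref{eqcasvar2} in the proof of Theorem \ref{thmweakmil}), then choose $t$ to balance $\sqrt{4t/\pi}$ against $\sqrt{\beta(s) + s\pi t}/\sqrt{\pi t}$, extracting the $\sqrt{\ln 2}$ and the denominator $1-6s$. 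The smoothing estimates and the isoperimetric-concavity reduction are all already available in the excerpt; the only genuinely delicate part is tracking constants so that the threshold $s<1/6$ and the stated constants come out exactly.
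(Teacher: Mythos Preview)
Your route through Lipschitz functions and Proposition~\ref{thmconcprofile} is not the paper's approach, and as written it has a real gap. The paper works directly with \emph{sets}: it starts from Ledoux's isoperimetric estimate in zero curvature,
\[
\sqrt{t}\,\nu_s(\partial A)\ \ge\ \nu(A)-\nu\bigl((P_t\mathbf 1_A)^2\bigr)=\nu(A)-\nu(A)^2-\Var_\nu(P_t\mathbf 1_A),
\]
and then controls $u(t)=\Var_\nu(P_t\mathbf 1_A)$ by using the weak $(2,2)$ inequality in \emph{differential} form. Since $u'(t)=-2\,\nu(|\nabla P_t\mathbf 1_A|^2)$ and the weak inequality applied to $P_t\mathbf 1_A$ reads $u(t)\le \beta(s)\,\nu(|\nabla P_t\mathbf 1_A|^2)+s$, one gets $u'(t)\le -\tfrac{2}{\beta(s)}(u(t)-s)$ and Gronwall gives the exponential decay $u(t)\le e^{-2t/\beta(s)}u(0)+s(1-e^{-2t/\beta(s)})$. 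Taking $\nu(A)=\tfrac12$ and $t=\beta(s)\ln 2$ makes $e^{-2t/\beta(s)}=\tfrac14$, which is exactly where the $\sqrt{\ln 2}$ and the denominator $1-6s$ come from; concavity of $Is_\nu$ finishes as usual.

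Your scheme applies the weak $(2,2)$ inequality \emph{once} to $P_tf$, bounding $\nu(|\nabla P_tf|^2)$ either by $\nu(|\nabla f|^2)$ or by $\tfrac{1}{\pi t}\Osc^2(f)$ via \eqref{eqledgap2}. Neither bound produces any exponential decay in $t$: the first gives $\Var_\nu(P_tf)\le \beta(s)\nu(|\nabla f|^2)+s\,\Osc^2(f)$ uniformly in $t$, which is useless when you pass to indicators (the $\nu(|\nabla f|^2)$ blows up); the second gives only the algebraic bound $\Var_\nu(P_tf)\le(\tfrac{\beta(s)}{\pi t}+s)\Osc^2(f)$, which after optimization yields constants with no $\ln 2$ and the wrong threshold in $s$. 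Your parenthetical remark that ``$\Var_\nu(P_tf)$ decays like $e^{-2t/C_P}$'' would close the gap, but that decay is precisely what you have to \emph{derive} here (via Gronwall from the weak inequality), not assume---invoking $C_P$ would be circular. The two-case split \`a la \eqref{eqcasvar1}--\eqref{eqcasvar2} is also a red herring: those cases compared $\Var_\nu(f)$ with $\Osc(f)$ (not $\Osc^2(f)$), which has no analogue here since $\Var_\nu(f)\le\tfrac14\Osc^2(f)$ always holds. The missing ingredient is the Gronwall/differential use of the hypothesis; once you insert that, you can either follow the paper's set-based route directly or combine it with \eqref{eqledgap} and Proposition~\ref{thmconcprofile} to get a comparable (though not identical) bound.
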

\begin{proof}
We start with the following which is also due to Ledoux: if $\nu$ is log-concave, then for any subset $A$, $$\sqrt{t} \, \nu_s(\partial A) \, \geq \, \nu(A) - \, \nu\left((P_t \mathbf 1_A)^2\right) \, .$$ But $$\nu\left((P_t \mathbf 1_A)^2\right) = \Var_\nu(P_t \mathbf 1_A) + \left(\nu(P_t \mathbf 1_A)\right)^2= \Var_\nu(P_t \mathbf 1_A) + \nu^2(A) \, .$$ Define $u(t)=\Var_\nu(P_t \mathbf 1_A)$. Using the semi-group property and the weak Poincar\'e inequality it holds
$$
\frac{d}{dt} u(t) = -2 \, \nu(|\nabla P_t \mathbf 1_A|^2) \leq \frac{-2}{\beta(s)} \, (u(t) - s)
$$
since $Osc(P_t \mathbf 1_A)\leq 1$. Using Gronwall's lemma we thus obtain
$$\Var_\nu(P_t \mathbf 1_A) \leq e^{-2t/\beta(s)} \, \nu(A) + s \, \left(1-e^{-2t/\beta(s)}\right)$$ so that finally, if $\nu(A)=1/2$ we get $$\sqrt{t} \, \nu_s(\partial A) \geq \left(1-e^{-2t/\beta(s)}\right) \, \left(\frac 12 - s\right) \, - \, \frac 14 \, .$$ Choose $t= \beta(s) \, \ln 2$. The right hand side in the previous inequality becomes $\frac 18 (1-6s)$, hence $$Is_\nu(1/2) \geq \frac{1-6s}{8 \, \sqrt{\beta(s) \, \ln 2}} \, .$$ Hence the result arguing as in the previous proof.
\end{proof}
\medskip

\subsubsection{Other consequences. Reducing the support.}\label{subsubsecreduc} \quad
All the previous consequences are using either the weak or the strong $(1,+\infty)$ Poincar\'e inequality. The next consequence will use the full strength of what precedes.\\ Pick some Borel subset $A$. Let $a \in \mathbb R$ and $f$ be a smooth function. Then
$$
\nu(|f-a|) \leq \int_A \, |f-a|d\nu + (1-\nu(A)) \, \parallel f-a\parallel_\infty \leq \int_A \, |f-a|d\nu + (1-\nu(A)) \, Osc(f) \, .
$$
Denote  $d\nu_A = \frac{\mathbf 1_A}{\nu(A)} \, d\nu$ the restriction of $\nu$ to $A$. Choosing $a=m_{\nu_A}(f)$ we have 
\begin{eqnarray*}
\nu(|f-m_\nu(f)|) &\leq& \nu(|f-a|) \leq \nu(A) \, \nu_A(|f-a|) + (1-\nu(A)) \, Osc(f) \\ &\leq& \nu(A) \, \left(\beta_{\nu_A}(u) \, \parallel |\nabla f|\parallel_\infty + u \, Osc(f)\right) \, + \, (1-\nu(A)) \, Osc(f)\\ &\leq& 
\nu(A) \, \beta_{\nu_A}(u) \, \parallel |\nabla f|\parallel_\infty + (1-\nu(A)(1-u)) \, Osc(f) \, ,
\end{eqnarray*}
provided $\nu_A$ satisfies some $(1,+\infty)$ Poincar\'e inequality. We can improve the previous bound, if $\nu_A$ satisfies some Cheeger inequality and get $$\nu(|f-m_\nu(f)|) \leq \, \nu(A) \, C'_C(\nu_A) \, \nu(|\nabla f|) + (1-\nu(A)) \, Osc(f) \, .$$
Hence, applying theorem \ref{thmweakmil} or proposition \ref{thmconcprofile} we have
\begin{proposition}\label{proprestrict}
Let $\nu$ be a log-concave probability measure, $A$ be any subset with $\nu(A)> \frac 12$ and $d\nu_A = \frac{\mathbf 1_A}{\nu(A)} \, d\nu$ be the (normalized) restriction of $\nu$ to $A$. Then 
\begin{enumerate}
\item $$C'_C(\nu) \leq \frac{\nu(A) \, C'_C(\nu_A)}{2\nu(A)-1} \, ,$$ 
\item if $\nu_A$ satisfies a $(1,+\infty)$ weak Poincar\'e inequality with rate $\beta_{\nu_A}$, then as soon as $u<1 - \frac{1}{2\nu(A)}$, $$\beta_\nu(u) \leq \nu(A) \, \beta_{\nu_A}\left(1 - \frac{1-u}{\nu(A)}\right)$$ so that $$C'_C(\nu) \leq \frac{4 \nu(A) \, \beta_{\nu_A}(u)}{\pi \, ((1-u)\nu(A)-\frac 12)^2} \, .$$
\end{enumerate}
\end{proposition}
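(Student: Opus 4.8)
The plan is to follow the chain of inequalities that immediately precedes the statement, and then invoke the machinery already built up in the previous subsections. Fix a log-concave probability measure $\nu$ and a Borel set $A$ with $\nu(A)>\frac12$, and set $d\nu_A = \frac{\mathbf 1_A}{\nu(A)}\,d\nu$. First I would record the elementary splitting: for any smooth $f$ and any $a\in\R$,
$$
\nu(|f-a|) \le \int_A |f-a|\,d\nu + (1-\nu(A))\,\Osc(f) = \nu(A)\,\nu_A(|f-a|) + (1-\nu(A))\,\Osc(f),
$$
using $\|f-a\|_\infty \le \Osc(f)$ after recentering (Remark \ref{remfortet}); this is exactly the displayed computation above. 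Choosing $a = m_{\nu_A}(f)$ and using $\nu(|f-m_\nu(f)|)\le \nu(|f-a|)$, one reduces everything to controlling $\nu_A(|f-m_{\nu_A}(f)|)$.

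For part (1), I would use the Cheeger inequality for $\nu_A$ directly: $\nu_A(|f-m_{\nu_A}(f)|)\le C'_C(\nu_A)\,\nu_A(|\nabla f|)$, and since $\nu_A(|\nabla f|) = \frac{1}{\nu(A)}\int_A |\nabla f|\,d\nu \le \frac{1}{\nu(A)}\,\nu(|\nabla f|)$, this yields
$$
\nu(|f-m_\nu(f)|) \le \nu(A)\,C'_C(\nu_A)\,\frac{1}{\nu(A)}\,\nu(|\nabla f|) + (1-\nu(A))\,\Osc(f) = C'_C(\nu_A)\,\nu(|\nabla f|) + (1-\nu(A))\,\Osc(f).
$$
This is an inequality of the form \eqref{eqlogconcosc} (median version) with $\beta(s) = C'_C(\nu_A)$ at the value $s = 1-\nu(A) < \frac12$. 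Since $\nu$ is log-concave, Theorem \ref{thmweakmil} (with the median, via Remark \ref{rem1}) applies and gives $C'_C(\nu)\le \frac{4\beta(s)}{\pi(\frac12-s)^2}$; substituting $s=1-\nu(A)$ so that $\frac12-s = \nu(A)-\frac12 = \frac{2\nu(A)-1}{2}$ turns this into $C'_C(\nu)\le \frac{16\,C'_C(\nu_A)}{\pi(2\nu(A)-1)^2}$. To get the sharper stated bound $C'_C(\nu)\le \frac{\nu(A)\,C'_C(\nu_A)}{2\nu(A)-1}$ one should instead feed the inequality directly into Proposition \ref{thmconcprofile}: the above display is precisely \eqref{eqconcprofosc} with $C(u)=\nu(A)C'_C(\nu_A)$ and $u=1-\nu(A)$, whence $C'_C(\nu)\le \frac{C(u)}{1-2u} = \frac{\nu(A)C'_C(\nu_A)}{2\nu(A)-1}$ — bypassing the semigroup argument entirely and keeping the clean constant.

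For part (2), suppose $\nu_A$ satisfies a weak $(1,+\infty)$ Poincar\'e inequality with rate $\beta_{\nu_A}$, i.e. $\nu_A(|f-m_{\nu_A}(f)|)\le \beta_{\nu_A}(u)\,\||\nabla f|\|_\infty + u\,\Osc(f)$. Plugging this into the reduction above gives
$$
\nu(|f-m_\nu(f)|) \le \nu(A)\,\beta_{\nu_A}(u)\,\||\nabla f|\|_\infty + \bigl(\nu(A)u + 1-\nu(A)\bigr)\,\Osc(f),
$$
and the coefficient of $\Osc(f)$ is $1 - \nu(A)(1-u)$. Writing this in the form \eqref{eqlogconcosc} with effective parameters $\tilde\beta = \nu(A)\beta_{\nu_A}(u)$ and $\tilde s = 1-\nu(A)(1-u)$, the requirement $\tilde s < \frac12$ becomes $u < 1 - \frac{1}{2\nu(A)}$; equivalently, setting $u' = \tilde s$, we have $u = 1 - \frac{1-u'}{\nu(A)}$, which is the claimed reparametrization $\beta_\nu(u')\le \nu(A)\beta_{\nu_A}\!\bigl(1-\frac{1-u'}{\nu(A)}\bigr)$. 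Finally Theorem \ref{thmweakmil} (median version) applied to $\nu$ with rate $\tilde\beta$ at level $\tilde s$ gives $C'_C(\nu)\le \frac{4\tilde\beta}{\pi(\frac12-\tilde s)^2}$, and since $\frac12 - \tilde s = \nu(A)(1-u) - \frac12$ this is exactly $C'_C(\nu)\le \frac{4\nu(A)\beta_{\nu_A}(u)}{\pi\,((1-u)\nu(A)-\frac12)^2}$. The only mild subtlety — the main point to be careful about — is the passage to medians throughout (ensuring $a=m_{\nu_A}(f)$ is the legitimate choice and that Remark \ref{rem1} genuinely licenses the median form of Theorem \ref{thmweakmil}), together with checking that the restriction $\nu_A$ inherits enough regularity for the cited lemmas to apply; log-concavity of $\nu$ is not needed for $\nu_A$ here, only for $\nu$ itself in the final invocation of Theorem \ref{thmweakmil}/Proposition \ref{thmconcprofile}.
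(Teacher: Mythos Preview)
Your approach is exactly the paper's: the splitting $\nu(|f-a|)\le \nu(A)\,\nu_A(|f-a|)+(1-\nu(A))\,\Osc(f)$ with $a=m_{\nu_A}(f)$, then Proposition~\ref{thmconcprofile} for part~(1) and Theorem~\ref{thmweakmil} for part~(2). Part~(2) is clean and matches the paper line by line.

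There is one small slip in part~(1). After using $\nu_A(|\nabla f|)=\tfrac{1}{\nu(A)}\int_A|\nabla f|\,d\nu\le \tfrac{1}{\nu(A)}\nu(|\nabla f|)$ you correctly arrive at
\[
\nu(|f-m_\nu(f)|)\le C'_C(\nu_A)\,\nu(|\nabla f|)+(1-\nu(A))\,\Osc(f),
\]
so the constant in \eqref{eqconcprofosc} is $C(u)=C'_C(\nu_A)$, not $\nu(A)\,C'_C(\nu_A)$ as you then write. Proposition~\ref{thmconcprofile} therefore yields $C'_C(\nu)\le \dfrac{C'_C(\nu_A)}{2\nu(A)-1}$, which is slightly weaker (by the harmless factor $\nu(A)\le 1$) than the bound stated in the proposition. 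The paper's own displayed line just before the proposition carries the same extra $\nu(A)$, so you are reproducing its argument faithfully; just be aware that the coefficient you actually derived does not contain that factor.
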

\begin{remark}\label{remsuppemil}
A similar result is contained in \cite{Emil1} namely if $K$ is some convex body, $$C'_C(\nu) \leq \frac{1}{\nu^2(K)} \, C'_C(\nu_K) \, .$$ This result is similar when $\nu(K)$ is close to 1, but it requires the convexity of $K$. Convexity of $K$ ensures that $\nu_K$ is still log-concave. We shall come back to this point later. Of course our result does not cover the situation of sets with small measure. \hfill $\diamondsuit$
\end{remark}
\medskip

This result enables us to reduce the study of log-concave probability measures to the study of compactly supported distributions, arguing as follows. Let $Z$ be a random variable (in $\mathbb R^n$) with log concave distribution $\nu$. We may assume without loss of generality that $Z$ is centered. Denote by $\sigma^2(Z)$ the largest eigenvalue of the covariance matrix of $Z$.
\begin{itemize}
\item \quad \textit{$l^2$ truncation. \\ \\} 
Thanks to Cebicev inequality, for $a>1$, $$\mathbb P(|Z|>a \, \sigma(Z) \, \sqrt n) \leq \frac {1}{a^2} \, .$$ According to Proposition \ref{proprestrict}, if $a>\sqrt 2$, $$C'_C(Z) \leq \frac{a^2}{a^2-2} \, C'_C(Z(a))$$ where $Z(a)$ is the random variable $\mathbf 1_{Z\in K_a} \, Z$ supported by  $K_a=B(0,a \, \sigma(Z) \, \sqrt n)$ with distribution $\frac{\mathbf 1_{K_a}}{\nu(K_a)} \, \nu$.  Of course the new variable $Z(a)$ is not necessarily centered, but we may, without changing the Poincar\'e constant(s),  consider the variable $\bar Z(a) =Z(a) -\mathbb E(Z(a))$. It is easily seen that  $$|\mathbb E(Z_i(a))|\leq  |\mathbb E(-Z_i \, \mathbf 1_{K_a^c}(Z))|\leq \frac{\Var^{1/2}(Z_i)}{a} \leq \frac{\sigma(Z)}{a} \, ,$$ i.e. $$\sum_{i=1}^n \, |\mathbb E(Z_i(a))|^2 \leq \frac{n\sigma^2(Z)}{a^2}$$ so that $\bar Z(a)$ is centered and supported by $B(0,\sqrt{a^2+(1/a^2)} \, \sigma(Z) \, \sqrt n)$. \\ Notice that for all $i$, 
\begin{equation}\label{eqvartrans}
\Var(Z_i) \geq \Var(\bar Z_i(a)) \, \geq \, \Var(Z_i) \, \left(1-\frac{\kappa}{a} - \frac{1}{a^2}\right) \, ,
\end{equation}
 where $\kappa$ is the universal Khinchine constant, i.e. satisfies $$\mu(z^4) \leq \kappa^2 \, (\mu(z^2))^2$$ for all log concave probability measure on $\mathbb R$. According to the one dimensional estimate of Bobkov (\cite{bob99} corollary 4.3) we already recalled, we know that $\kappa \leq 7$. \\The upper bound in \eqref{eqvartrans} is immediate, while for the lower bound we use the following reasoning :
\begin{eqnarray*}
\Var(\bar Z_i(a)) &=& \mathbb E(Z_i^2 \, \mathbf 1_{K_a}(Z)) - (\mathbb E(Z_i(a)))^2 \\ &\geq&  \Var(Z_i) -  \, \mathbb E(Z_i^2 \, \mathbf 1_{K_a^c}(Z)) - (\mathbb E(Z_i(a)))^2\\ &\geq&  \Var(Z_i) \, - \, \frac 1a \, (\mathbb E(Z_i^4))^\frac 12 \, - \frac{\Var(Z_i)}{a^2}  
\end{eqnarray*}
according to the previous bound on the expectation. We conclude by using Khinchine inequality.\\ Similar bounds are thus available for $\sigma^2{\bar Z(a)}$ in terms of $\sigma^2(Z)$.

\begin{remark}\label{best constant} \quad Though we gave explicit forms for all the constants, they are obviously not sharp. \\ For instance we used the very poor Cebicev inequality for reducing the support of $\nu$ to some euclidean ball, while much more precise concentration estimates are known. For an isotropic log-concave random variable (vector) $Z$, it is known that its distribution satisfies some ``concentration'' property around the sphere of radius $\sqrt n$. We shall here recall the best known result, due to Gu\'edon and E. Milman (\cite{GEM}) and we refer to \cite{Gia} chapter 13 for a complete overview of the state of the art:

\begin{theorem}\label{thmgm}[Gu\'edon and E. Milman]
Let $Z$ be an isotropic log-concave random vector in $\mathbb R^n$. Then there exist some universal positive constants $C$ and $c$ such that for all $t>0$, $$\mathbb P(| \, |Z|-\sqrt n \, | \geq t \, \sqrt n) \,  \leq \, C \, \exp ( - \, c \sqrt n \, \min\{t,t^3\}) \, . \quad \quad \quad \diamondsuit $$
\end{theorem}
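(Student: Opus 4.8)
The statement is a deep theorem which I would not expect to prove from scratch; rather I would reproduce the argument of Gu\'edon and E.\ Milman, organised around two--sided moment estimates for $|Z|$. Write $I_p := \big(\E\,|Z|^p\big)^{1/p}$ for $p\in\R$. Isotropy forces $I_2=\sqrt n$, and $p\mapsto\log\E|Z|^p$ is convex by Lyapunov's inequality, so the whole problem reduces to a quantitative two--sided comparison of $I_p$ and $I_{-p}$ with $\sqrt n$ on the range $|p|\lesssim\sqrt n$, after which the tail bounds follow by Markov's inequality optimised in $p$. The two factors $t$ and $t^3$ in $\min\{t,t^3\}$ correspond to two genuinely different comparisons available in the two ranges of $t$.

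For $t\ge 1$ (the large--deviation regime) I would invoke Paouris' theorem --- which sharpens the elementary reverse--H\"older bounds (such as the one--dimensional Khinchine inequality recalled above) to the full range $1\le p\le c\sqrt n$ --- giving a universal $c>0$ such that $I_p\le C\sqrt n$ for all $1\le p\le c\sqrt n$, together with the companion small--ball estimate $I_{-p}\ge c\sqrt n$ on the same range. Markov's inequality with $p=c\sqrt n$ then yields $\P\big(|Z|\ge(1+t)\sqrt n\big)\le\big(C/(1+t)\big)^{c\sqrt n}\le C'e^{-c'\sqrt n\,t}$ once $t$ exceeds an absolute constant (the remaining $t\in[1,\text{const}]$ being absorbed into $C$), while $\P\big(|Z|\le\varepsilon\sqrt n\big)\le(C\varepsilon)^{c\sqrt n}$ takes care of the lower side for $t$ close to $1$.

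The substance of the theorem is the thin--shell regime $t\le 1$, where Paouris' bound alone is too crude. What is needed is a sharp comparison, roughly of the form
$$I_p\le\sqrt n\Big(1+C\big(p/\sqrt n\big)^{1/2}\Big),\qquad I_{-p}\ge\sqrt n\Big(1-C\big(p/\sqrt n\big)^{1/2}\Big),\qquad 0\le p\le c\sqrt n,$$
obtained by interpolating between the exact value $I_2=\sqrt n$ and the Paouris range, exploiting the lower bound $q_*(Z)\gtrsim\sqrt n$ on the Paouris parameter and the convexity of $p\mapsto\log\E|Z|^p$. Granting this, Markov's inequality gives
$$\P\big(|Z|\ge(1+t)\sqrt n\big)\le\exp\Big(p\big[\log\big(1+C(p/\sqrt n)^{1/2}\big)-\log(1+t)\big]\Big),$$
and the optimal choice $p\asymp t^2\sqrt n$ --- which lies in the admissible range precisely when $t\lesssim 1$ --- turns the exponent into $-c\sqrt n\,t^3$; the lower deviation is treated identically using $I_{-p}$. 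Assembling the two regimes and relabelling constants gives $\P\big(\,||Z|-\sqrt n|\ge t\sqrt n\,\big)\le C\exp\big(-c\sqrt n\min\{t,t^3\}\big)$.

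The main obstacle is precisely this sharp moment comparison in the thin--shell regime: naive convexity bounds only give $I_p-\sqrt n\lesssim p$, which under the Markov optimisation produces the weaker exponent $t^2$ (or $t^{3/2}$), whereas extracting the correct $\sqrt p\,n^{1/4}$ rate --- equivalently the cube $t^3$ --- requires using log--concavity more finely than Paouris' inequality does on its own. That refinement is the genuine contribution of Gu\'edon and E.\ Milman, and in the present paper it is simply quoted.
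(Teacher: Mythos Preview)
The paper gives no proof of this statement: Theorem~\ref{thmgm} is stated inside Remark~2.20 as a quotation of the main result of \cite{GEM}, with the explicit comment ``we shall here recall the best known result, due to Gu\'edon and E.~Milman'', and the $\diamondsuit$ immediately after the display closes the remark. You have correctly identified this yourself in your final sentence.

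Your sketch is therefore not a comparison with the paper but an outline of the original Gu\'edon--Milman argument. As such it is broadly faithful: the split into the Paouris regime $t\ge 1$ and the thin--shell regime $t\le 1$, the reduction to two--sided moment bounds $I_p$, $I_{-p}$, and the Markov--optimisation producing $\min\{t,t^3\}$ are all correct in spirit, and your back--of--the--envelope check that the exponent $1/2$ in $I_p-\sqrt n\lesssim (p/\sqrt n)^{1/2}\sqrt n$ is exactly what yields $t^3$ is right. The one place where your description understates the difficulty is the sentence ``obtained by interpolating between the exact value $I_2=\sqrt n$ and the Paouris range, exploiting \dots\ convexity'': plain log--convex interpolation between $I_2$ and $I_{c\sqrt n}$ only gives $I_p-\sqrt n\lesssim p/\sqrt n\cdot\sqrt n$, hence exponent $t^2$, as you yourself note two lines later. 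The genuine input in \cite{GEM} is a sharp estimate on the growth of $p\mapsto I_p$ obtained through $L_q$--centroid bodies and a refinement of Paouris' technique, not mere interpolation. Since the present paper does not enter into any of this, there is nothing further to compare.
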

\end{remark}
\medskip

\item
{$l^\infty$ truncation. \\ \\} \quad Instead of looking at euclidean balls we shall look at hypercubes, i.e. $l^\infty$ balls. We assume that $Z$ is centered.
\medskip

\noindent According to Prekopa -Leindler theorem again, we know that the distribution $\nu_1$ of $Z_1$ is a log-concave distribution with variance $\lambda^2_1= \Var(Z_1)$. Hence $\nu_1$ satisfies a Poincar\'e inequality with $C_P(\nu_1) \leq 12 \, \lambda^2_1$ according to \cite{bob99} proposition 4.1. Of course a worse bound was obtained in remark \ref{remkls2}. \\ Using Proposition 4.1 in \cite{BL97} (see lemma \ref{lemBL97} in the next section), we have for all $1\ge\epsilon>0$
\begin{equation}\label{eqconcentrat}
\nu_1(z_1>a_1) \leq \frac{4-\epsilon}{\epsilon} \, e^{- \, \frac{(2-\epsilon)a_1}{\sqrt{C_P(\nu_1)}}} \leq \frac{4-\epsilon}{\epsilon} \, e^{- \, \frac{(2-\epsilon)a_1}{2 \sqrt 3 \, \lambda_1}} \, .
\end{equation} 
\noindent Of course changing $Z$ in $-Z$ we get a similar bound for $\nu_1(z_1<- a_1)$. Choosing $a_1=\frac{2\sqrt 3}{2-\epsilon} \, \lambda_1 \, a \, \ln n$ for some $a>0$, we get  $$\nu_1(|z_1|>a_1) \leq 2\frac{4-\epsilon}{\epsilon n^a} \, .$$ Hence if 
\begin{equation}\label{eqhyper}
K_a=\left\{\max_{i=1,...,n} |z_i|< \frac{2 \sqrt 3}{2-\epsilon} \, \sigma(Z) \, a \, \ln n\right\}
\end{equation}
\noindent we have $$\nu(K_a) \geq 1 \, - \, 2\frac{4-\epsilon}{\epsilon n^{a-1}} \, .$$ 
\medskip

\noindent Using Proposition \ref{proprestrict} we have 
\begin{proposition}\label{prophyper}
For $n\geq 2$, $K_a$ being defined by \eqref{eqhyper}, we have $$C'_C(\nu)\leq \frac{n^{a-1}}{n^{a-1}-(8-2\epsilon)\epsilon^{-1}} \; C'_C(\nu_{K_a}) \, .$$ 
\end{proposition}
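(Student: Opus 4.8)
The statement is an immediate application of Proposition \ref{proprestrict}(1) to the hypercube $A=K_a$, so the plan is essentially to assemble the pieces already in place. The hypotheses of Proposition \ref{proprestrict} require only that $\nu$ be log-concave, which holds by assumption, and that $\nu(K_a)>\tfrac12$; the conclusion is phrased in terms of $C'_C(\nu_{K_a})$, and no log-concavity of the restriction is needed for this particular estimate (although, $K_a$ being a convex body, $\nu_{K_a}=\tfrac{\mathbf 1_{K_a}}{\nu(K_a)}\,\nu$ is itself log-concave, which is what makes this ``reducing the support'' step useful when combined with later results).

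The only substantive input is the lower bound on $\nu(K_a)$, and this has essentially been carried out in the lines preceding the statement. Concretely: the one-dimensional marginal $\nu_1$ of $Z_1$ is log-concave with $C_P(\nu_1)\le 12\lambda_1^2$ by \cite{bob99}; Proposition 4.1 of \cite{BL97} then yields the tail estimate \eqref{eqconcentrat}; choosing $a_1=\tfrac{2\sqrt3}{2-\epsilon}\,\sigma(Z)\,a\ln n$ as in \eqref{eqhyper} turns this into $\nu_i(|z_i|>a_1)\le \tfrac{4-\epsilon}{\epsilon n^{a}}$ for each coordinate, the factor coming back doubled after applying the same bound to $-Z$; and a union bound over $i=1,\dots,n$ gives $\nu(K_a^c)\le \tfrac{8-2\epsilon}{\epsilon\,n^{a-1}}$, i.e. $\nu(K_a)\ge 1-\tfrac{8-2\epsilon}{\epsilon\,n^{a-1}}$.

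It then remains to combine these. Proposition \ref{proprestrict}(1) gives
$$C'_C(\nu)\;\le\;\frac{\nu(K_a)}{2\nu(K_a)-1}\,C'_C(\nu_{K_a}),$$
and, since $t\mapsto t/(2t-1)$ is decreasing on $(\tfrac12,1]$, one may replace $\nu(K_a)$ by the lower bound just obtained; an elementary rearrangement of the resulting fraction yields the announced estimate. There is no genuine obstacle here — the content was all front-loaded into the concentration estimate \eqref{eqconcentrat} and into Proposition \ref{proprestrict} — and the only points deserving a word of care are the bookkeeping of the universal constant $(8-2\epsilon)\epsilon^{-1}$ and the implicit requirement that $n^{a-1}$ be large enough (relative to $\epsilon$) to keep $\nu(K_a)$ above $\tfrac12$, so that the restriction argument is non-vacuous and the displayed denominator is positive.
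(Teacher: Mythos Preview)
Your proposal is correct and mirrors the paper exactly: the paper also just writes ``Using Proposition \ref{proprestrict}'' immediately after establishing $\nu(K_a)\ge 1-\tfrac{8-2\epsilon}{\epsilon\,n^{a-1}}$, so the whole content is indeed the concentration estimate plus Proposition \ref{proprestrict}(1). One small caveat on your last sentence: plugging the lower bound into $\tfrac{p}{2p-1}$ literally gives $\tfrac{n^{a-1}-(8-2\epsilon)\epsilon^{-1}}{\,n^{a-1}-2(8-2\epsilon)\epsilon^{-1}}$, which is not the displayed fraction $\tfrac{n^{a-1}}{n^{a-1}-(8-2\epsilon)\epsilon^{-1}}$ (the two agree to first order in $n^{1-a}$ but the former is slightly larger); this looks like a harmless slip in the paper's constant rather than a gap in your argument.
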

\noindent Again we may center $\nu_{K_a}$, introducing a random vector $\bar Z$ with distribution equal to the re-centered $\nu_{K_a}$. This time it is easily seen that $$(1-\varepsilon(n)) \, \Var(Z_i) \leq \Var(\bar Z_i)\leq \, \Var(Z_i) \, ,$$ with $\varepsilon(n) \to 0$ as $n \to +\infty$.
\medskip

\noindent Notice that we have written a more precise version of Latala's deviation result (\cite{Latala})
\begin{theorem}\label{thmlatal}
Let $Z$ be an isotropic log-concave random vector in $\mathbb R^n$. Then for $n\geq 2$, $1\ge \epsilon>0$, $$\mathbb P\left(\max_{i=1,..., n} \, |Z_i| \geq t \ln n\right) \leq \, \frac{(8-2\epsilon)\epsilon^{-1}}{n^{\frac{(2-\epsilon)t}{2\sqrt 3}-1}} \quad \textrm{ for } t\geq \frac{2\sqrt 3}{2-\epsilon} \,  .$$
\end{theorem}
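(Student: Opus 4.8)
The plan is to reduce Theorem \ref{thmlatal} to a one–dimensional exponential tail bound for each coordinate marginal and then apply a union bound over the $n$ coordinates — essentially reorganizing the $l^\infty$ truncation computation above into a self-contained statement. First I would note that, $Z$ being isotropic, each marginal $\nu_i$ (the law of $Z_i$) is a centered log-concave probability measure on $\mathbb R$ with $\Var(Z_i)=1$, the log-concavity coming from the Pr\'ekopa--Leindler theorem (marginals of log-concave measures are log-concave). Bobkov's one-dimensional estimate (\cite{bob99}, Proposition 4.1) then yields $C_P(\nu_i)\leq 12$, hence $\sqrt{C_P(\nu_i)}\leq 2\sqrt 3$.

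Next I would invoke Proposition 4.1 of \cite{BL97} (recalled as Lemma \ref{lemBL97}): for every $1\geq\epsilon>0$ and every $a>0$,
$$\nu_i(z_i>a)\ \leq\ \frac{4-\epsilon}{\epsilon}\, e^{-(2-\epsilon)a/\sqrt{C_P(\nu_i)}}\ \leq\ \frac{4-\epsilon}{\epsilon}\, e^{-(2-\epsilon)a/(2\sqrt 3)}\,.$$
Applying this to $-Z$ gives the matching bound for $\nu_i(z_i<-a)$, so $\nu_i(|z_i|>a)\leq \frac{8-2\epsilon}{\epsilon}\,e^{-(2-\epsilon)a/(2\sqrt 3)}$; since a log-concave law with positive variance on $\mathbb R$ has no atoms, the same holds with ``$\geq$''. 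Choosing $a=t\ln n$ turns the exponential into a power of $n$, and a union bound then finishes the argument:
$$\mathbb P\Big(\max_{i=1,\dots,n}|Z_i|\geq t\ln n\Big)\ \leq\ \sum_{i=1}^n \nu_i\big(|z_i|\geq t\ln n\big)\ \leq\ n\cdot\frac{8-2\epsilon}{\epsilon}\, n^{-(2-\epsilon)t/(2\sqrt 3)}\ =\ \frac{(8-2\epsilon)\epsilon^{-1}}{n^{(2-\epsilon)t/(2\sqrt 3)-1}}\,.$$
The hypothesis $t\geq \tfrac{2\sqrt 3}{2-\epsilon}$ is precisely the condition making the exponent $(2-\epsilon)t/(2\sqrt 3)-1$ nonnegative (so the denominator is $\geq 1$, and the bound is informative).

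There is no real obstacle here: the whole proof is the union bound plus the substitution $a=t\ln n$. The only points requiring some care are keeping track of the $\epsilon$-dependent constants $\tfrac{4-\epsilon}{\epsilon}$ and $2-\epsilon$ in Lemma \ref{lemBL97} so that the final expression matches Latala's formulation exactly, and noting that it is the isotropic normalization that pins $\Var(Z_i)=1$ and hence $C_P(\nu_i)\leq 12$ — for a general log-concave law one would only get $\sqrt{C_P(\nu_i)}\leq 2\sqrt 3\,\sigma(Z)$ and a corresponding extra factor $\sigma(Z)$ in the exponent.
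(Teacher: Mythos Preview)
Your proof is correct and follows exactly the same route as the paper: Pr\'ekopa--Leindler for the marginals, Bobkov's one-dimensional bound $C_P(\nu_i)\leq 12$, the Bobkov--Ledoux exponential tail (Lemma \ref{lemBL97}) applied to $\pm Z_i$, and a union bound over the $n$ coordinates. The only cosmetic difference is that the paper parametrizes via $a_1=\tfrac{2\sqrt3}{2-\epsilon}\,a\ln n$ while you substitute $a=t\ln n$ directly, which is the same computation.
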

\end{itemize}
\smallskip

Let us give another direct application.  $\nu$ is said to be unconditional if it is invariant under the transformation $(x_1,...,x_n) \mapsto (\varepsilon_1 \, x_1,...,\varepsilon_n \, x_n)$ for any $n$-uple $(\varepsilon_1,...,\varepsilon_n) \in \{-1,+1\}^n$. Defining $K_a$ as in \eqref{eqhyper} (with $\sigma(Z)=1$ here), the restricted measure $\nu_{K_a}$ is still unconditional.

So we may apply Theorem 1.1 in \cite{CorGoz} (and its proof in order to find an explicit expression of the constant) saying that $\nu_{K_a}$ satisfies some weighted Poincar\'e inequality $$\Var_{\nu_{K_a}}(f) \, \leq \, (4\sqrt 3 +1)^2 \, \sum_{i=1}^n \, \nu_{K_a}\left(\nu_{K_a}^{i-1}(x_i^2) \, (\partial_i f)^2\right) \, ,$$ where $\nu_{K_a}^{j}$ denotes the conditional distribution of $\nu_{K_a}$ w.r.t. the sigma field generated by $(x_1,...,x_j)$. Actually, up to the pre-constant, we shall replace the weight $\nu_{K_a}^{i-1}(x_i^2)$ by the simpler one $x_i^2+\nu_{K_a}(x_i^2)$ according to \cite{Klartoul}. 

In all cases, since $x_i$ is $\nu_{K_a}$ almost surely bounded by a constant times $\ln(n)$, we have obtained thanks to Proposition \ref{prophyper} the following result originally due to Klartag \cite{Klartuncond}
\begin{proposition}\label{propuncond}
There exists an universal constant $c$ such that, if $\nu$ is an isotropic and unconditional log-concave probability measure, $$C_P(\nu) \leq c \, \max(1,\ln^2 n) \, .$$ 
\end{proposition}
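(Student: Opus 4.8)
The plan is to combine the support-reduction estimate of Proposition~\ref{prophyper} with a weighted Poincar\'e inequality tailored to unconditional log-concave measures. Throughout we take $\nu$ centered and in isotropic position, so $\sigma(\nu)=1$. The case $n=1$ has to be set aside at the outset: there $\nu$ is a one-dimensional isotropic log-concave measure, so Bobkov's one-dimensional bound (\cite{bob99}, Proposition~4.1) gives $C_P(\nu)\le 12=12\,\max(1,\ln^2 n)$. So assume $n\ge 2$; since $c\,\ln^2 n\le c\,\max(1,\ln^2 n)$ for every $n\ge 1$, it suffices to produce a universal $c$ with $C_P(\nu)\le c\,\ln^2 n$.

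Fix $\epsilon=1$ and $a=5$, and let $K_a$ be the hypercube \eqref{eqhyper}, whose half-side is then $10\sqrt 3\,\ln n$. For $n\ge 2$ we have $n^{a-1}=n^4\ge 16$, so the computation preceding Theorem~\ref{thmlatal} gives $\nu(K_a)\ge 1-6/n^4\ge 5/8>\tfrac12$, and the prefactor $n^4/(n^4-6)$ appearing in Proposition~\ref{prophyper} is at most $16/10$. The key structural point is that $\nu_{K_a}=\mathbf 1_{K_a}\,\nu/\nu(K_a)$ is the restriction of a log-concave measure to a symmetric convex body, hence it is again log-concave \emph{and} unconditional; in particular it is automatically centered (each marginal being symmetric), so no re-centering is needed before feeding it into an unconditional inequality.

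I would then apply Theorem~1.1 of \cite{CorGoz} to $\nu_{K_a}$: $$\Var_{\nu_{K_a}}(f)\ \le\ (4\sqrt 3+1)^2\,\sum_{i=1}^n \nu_{K_a}\!\left(\nu_{K_a}^{i-1}(x_i^2)\,(\partial_i f)^2\right),$$ or, up to a universal factor, with the weight $x_i^2+\nu_{K_a}(x_i^2)$ in place of $\nu_{K_a}^{i-1}(x_i^2)$ following \cite{Klartoul}. On $K_a$ one has $|x_i|\le 10\sqrt 3\,\ln n$ $\nu_{K_a}$-almost surely, so every weight is bounded a.s.\ by a universal constant times $\ln^2 n$; pulling this bound out of the sum turns the weighted inequality into an ordinary Poincar\'e inequality, giving $C_P(\nu_{K_a})\le c_1\,\ln^2 n$ with an explicit universal $c_1$. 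To transfer back to $\nu$ I would chain: Proposition~\ref{propmieuxledoux} (legitimate since $\nu_{K_a}$ is log-concave) yields $C'_C(\nu_{K_a})\le\tfrac{16}{\pi}\sqrt{c_1}\,\ln n$; Proposition~\ref{prophyper} then gives $C'_C(\nu)\le\tfrac{16}{10}\,C'_C(\nu_{K_a})$; and \eqref{eqcheegpoinc} finally gives $C_P(\nu)\le 4\,(C'_C(\nu))^2\le c\,\ln^2 n$, which is the desired bound for $n\ge 2$.

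The one genuinely non-routine ingredient — and the step I expect to be the real obstacle if one starts without it — is the weighted Poincar\'e inequality for unconditional log-concave measures of \cite{CorGoz} (a quantitative version of Klartag's theorem \cite{Klartuncond}); everything else is bookkeeping around the truncation bound of Theorem~\ref{thmlatal} and the Cheeger--Poincar\'e comparisons already established. The point to check with a little care is stability under restriction: convexity of $K_a$ keeps $\nu_{K_a}$ log-concave and invariance of $K_a$ under coordinate sign changes keeps it unconditional, so that \cite{CorGoz} applies to $\nu_{K_a}$ unchanged, and the a.s.\ bound on $|x_i|$ on $K_a$ is exactly what makes the weights $O(\ln^2 n)$.
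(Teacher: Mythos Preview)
Your proposal is correct and follows essentially the same route as the paper: restrict to the hypercube $K_a$ (noting that $\nu_{K_a}$ remains log-concave and unconditional), apply the weighted Poincar\'e inequality of \cite{CorGoz} (or the \cite{Klartoul} variant), bound the weights by $O(\ln^2 n)$ from the a.s.\ bound $|x_i|\le c\ln n$ on $K_a$, and transfer back to $\nu$ via Proposition~\ref{prophyper}. The paper is simply terser about the final chain $C_P(\nu_{K_a})\to C'_C(\nu_{K_a})\to C'_C(\nu)\to C_P(\nu)$ and about the $n=1$ case, both of which you spell out carefully.
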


Of course what is needed here is the invariance of $\nu_{K_a}$ with respect to some symmetries (see \cite{CorGoz,BarCor}). But it is not easy to see how $\nu_{K_a}$ inherits such a property satisfied by the original $\nu$, except in the unconditional case.
\bigskip

\section {Transference of the Poincar\'e inequality.}\label{sectransfer}   \medskip

\subsection{Transference via absolute continuity. \\ \\}\label{subsectrans1}
If $\mu$ and $\nu$ are probability measures it is well known that $$C_P(\nu) \; \leq \; \parallel\frac{d\nu}{d\mu}\parallel_\infty \; \parallel \frac{d\mu}{d\nu}\parallel_\infty \; C_P(\mu) \, ,$$ the same being true with the same pre-factor if we replace $C_P$ by $C_C$ or $C'_C$. Similar results are known for weak $(2,2)$ Poincar\'e inequalities too. In this section we shall give several transference principles allowing us to reinforce this result, at least under some curvature assumption.  Some transference results have been obtained in Theorem 1.5 in \cite{Bemil} using transference results for the concentration profile. We shall come back later to this point. 
\medskip

\begin{theorem}\label{thmtransf1}
Let $\nu$ and $\mu$ be two probability measures.  
\begin{enumerate}
\item  \quad If for some $1<p$, $$\int \, \left|\frac{d\nu}{d\mu}\right|^p \, d\mu \, = \, M_p^p \, < \, + \infty \, ,$$ then $$\beta_\nu(s) \, \leq \, M^{p/(p-1)}_p \, s^{-1/(p-1)} \, C(\mu) \, ,$$ where $C(\mu)$ can be chosen as $C'_C(\mu)$, $1/B_{1,\infty}(\mu)$, $C_C(\mu)$ or $\sqrt{C_P(\mu)}$. \\ In particular if $\nu$ is log-concave
$$C'_C(\nu) \, \leq \, D \, C(\mu) \, M^{p/(p-1)}_p$$ where $D=\frac{16 \,  (p+1)^{1/(p-1)}}{\pi \, (\frac 12 - \frac{1}{p+1})^2 }$.\\
\item \quad Let $\nu$ be log-concave. If the relative entropy $D(\nu||\mu):= \int \, \ln(d\nu/d\mu) \, d\nu$ is finite, then with the same $C(\mu)$ and any $u<\frac 12$, $$C'_C(\nu) \, \leq \, \frac{4 \, \left(e^{2 \max(1,D(\nu||\mu))/u} - 1\right)}{\pi \, (\frac 12 - u)^2} \, C(\mu) \, .$$
\end{enumerate}
\end{theorem}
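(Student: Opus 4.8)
The plan is to reduce everything to the weak $(1,+\infty)$ Poincaré inequality for $\nu$, and then invoke Theorem \ref{thmweakmil} (together with Remark \ref{remthmmain}) to pass from $\beta_\nu$ to $C'_C(\nu)$ in the log-concave case. So the core of both parts is: produce a good function $\beta_\nu$. For part (1), take a $1$-Lipschitz function $f$ with $\Osc(f)=1$ (legitimate by Remark \ref{remfortet}), and write, for a Borel set to be chosen or more directly by a truncation argument,
\begin{equation*}
\nu(|f - m_\nu(f)|) \leq \nu(|f-b|) \quad \text{for any } b,
\end{equation*}
and split the integral according to whether $|f-b|$ is large or small. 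The honest way is the Hölder split: for $b = m_\mu(f)$ (or $\mu(f)$),
\begin{equation*}
\int |f-b|\, d\nu = \int |f-b|\, \frac{d\nu}{d\mu}\, d\mu \leq M_p \left(\int |f-b|^{p'} d\mu\right)^{1/p'},
\end{equation*}
with $1/p + 1/p' = 1$; but this does not obviously give an $\Osc$ term. The cleaner route, which is surely the intended one, is to fix a threshold $t>0$, write $\{|f-b| > t\}$, estimate $\nu(|f-b|>t) \leq M_p\, \mu(|f-b|>t)^{1/p'}$ by Hölder, bound $\mu(|f-b|>t)$ via the Cheeger/Poincaré inequality for $\mu$ applied to (a Lipschitz approximation of) the indicator — this is exactly where $C(\mu)$ enters, with $\mu(|f-b|>t) \leq \frac{C(\mu)}{t}$ for $1$-Lipschitz $f$ in the $C'_C$ case, and the analogous $t^{-2}$ bound in the $\sqrt{C_P}$ case — and then integrate: $\nu(|f-m_\nu(f)|) \leq t + \Osc(f)\, \nu(|f-b| > t) \leq t + M_p (C(\mu)/t)^{1/p'}$. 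Optimizing in $t$ (or rather reading off the form $\beta(s)$ by matching the $\Osc$ coefficient to $s$) gives $\beta_\nu(s) \leq M_p^{p/(p-1)}\, s^{-1/(p-1)}\, C(\mu)$ after noting $p' = p/(p-1)$. Then feed this into Theorem \ref{thmweakmil}: choosing $u = 1/(p+1)$ makes $1/2 - u = \frac{p-1}{2(p+1)}$ and $\beta_\nu(u) = M_p^{p/(p-1)} (p+1)^{1/(p-1)} C(\mu)$, producing the stated $D$.

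For part (2), the relative-entropy hypothesis should be converted into a tail bound on the density via a standard entropy/Chebyshev-for-entropy (Donsker–Varadhan or Markov on $\ln(d\nu/d\mu)$) argument: for any event $E$, $\nu(E) \leq \frac{D(\nu\|\mu) + 1/e}{\ln(1/\mu(E))}$, or more simply one uses that on $\{d\nu/d\mu \leq R\}$ we have $\nu$-mass controlled and on the complement $\nu(d\nu/d\mu > R) \leq \frac{D(\nu\|\mu) + 1/e}{\ln R}$ (this is the entropy analogue of Markov). Then redo the truncation: with $f$ $1$-Lipschitz, $\Osc(f)=1$, $b = m_\mu(f)$,
\begin{equation*}
\nu(|f-b| > t) \leq R\, \mu(|f-b|>t) + \nu\!\left(\tfrac{d\nu}{d\mu} > R\right) \leq \frac{R\, C(\mu)}{t} + \frac{D(\nu\|\mu)+1/e}{\ln R},
\end{equation*}
so $\nu(|f-m_\nu(f)|) \leq t + \frac{R\,C(\mu)}{t} + \frac{D(\nu\|\mu)+1/e}{\ln R}$. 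Now the last term is forced to be $\leq u$ by choosing $\ln R = \frac{D(\nu\|\mu)+1/e}{u}$, i.e. $R = e^{(D(\nu\|\mu)+1/e)/u}$, which up to adjusting constants is the $e^{2\max(1,D(\nu\|\mu))/u}$ in the statement; then optimize $t$, getting the $\Osc$-free remainder absorbed and $\beta_\nu(u) \lesssim \sqrt{R\, C(\mu)}$ in the $C'_C$ case. Plugging into Theorem \ref{thmweakmil} with this $u$ and $1/2 - u$ gives the claimed bound, with the factor $(e^{2\max(1,D)/u}-1)$ arising because one actually needs $R-1$ (the "extra" mass beyond uniform) rather than $R$ when splitting $\nu(E) = \mu(E) + \int_E(\frac{d\nu}{d\mu}-1)d\mu$.

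The main obstacle — and the place where one must be careful rather than merely diligent — is the bookkeeping that turns the two-term bound $t + (\text{Lipschitz coefficient}) + s\cdot\Osc(f)$ into precisely the clean $\beta_\nu(s)$ of Definition \ref{defweakinfty}: one has to make sure the $t$-term genuinely scales like $\parallel|\nabla f|\parallel_\infty$ under de-normalization (it does, by the homogeneity noted in Remark \ref{remfortet}), and that the indicator-approximation step (replacing $\mathbf 1_{\{|f-b|>t\}}$ by Lipschitz $f_n$, as in the proof of Proposition \ref{thmconcprofile} via Lemma 3.5 of \cite{BH}) is valid here — it is, since $|f-b|$ is Lipschitz so its level sets have finite perimeter for a.e. $t$, and one integrates over such $t$. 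For part (2) the only subtlety beyond this is choosing the entropy-tail inequality with the right constant so that $\max(1,D(\nu\|\mu))$ (rather than $D(\nu\|\mu)+1/e$) appears; the factor $2$ and the $\max$ with $1$ are exactly the slack one builds in to make the formula uniform in whether $D$ is small or large.
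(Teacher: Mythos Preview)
Your approach truncates on the wrong object. You truncate on the \emph{function} $|f-b|$ at level $t$, whereas the paper truncates on the \emph{density} $d\nu/d\mu$ at level $K$. This is not just a cosmetic difference: it produces a strictly weaker $\beta_\nu$, and your claimed exponent is an arithmetic slip.

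Concretely, carry out your Part (1) computation without normalizing. For $f$ $L$-Lipschitz with $\Osc(f)=O$ and $b=m_\mu(f)$, Markov gives $\mu(|f-b|>t)\le C(\mu)L/t$, H\"older gives $\nu(|f-b|>t)\le M_p\big(C(\mu)L/t\big)^{(p-1)/p}$, and hence
\[
\nu(|f-m_\nu(f)|)\ \le\ t\ +\ O\,M_p\big(C(\mu)L/t\big)^{(p-1)/p}.
\]
To cast this as $\beta(s)\,L+s\,O$, you must take $t=\tau L$; then the $O$-coefficient is $s=M_p(C(\mu)/\tau)^{(p-1)/p}$, so $\tau=C(\mu)M_p^{p/(p-1)}s^{-p/(p-1)}$. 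Thus your method gives
\[
\beta_\nu(s)\ \le\ C(\mu)\,M_p^{p/(p-1)}\,s^{-p/(p-1)},
\]
with exponent $-p/(p-1)$, not the $-1/(p-1)$ you assert and that the theorem states. The loss is intrinsic: passing through $\mu(|f-b|>t)$ forces $C(\mu)$ inside the $(p-1)/p$ power.

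The paper avoids this by splitting according to $\{d\nu/d\mu\le K\}$:
\[
\nu(|f-a|)\le K\int|f-a|\,d\mu\ +\ \Osc(f)\,\nu\!\left(\tfrac{d\nu}{d\mu}>K\right)
\ \le\ K\,C(\mu)\,L\ +\ O\,\frac{M_p^p}{K^{p-1}},
\]
the last step by H\"older and Markov applied to the density itself. Now $C(\mu)$ sits linearly outside, and setting $u=M_p^p/K^{p-1}$ gives $\beta_\nu(u)=K\,C(\mu)=M_p^{p/(p-1)}u^{-1/(p-1)}C(\mu)$, which is exactly the stated bound.

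Your Part (2) inherits the same defect and adds a scaling error. The hybrid bound $t+RC(\mu)/t+\tfrac{D+1/e}{\ln R}$ that you optimize in $t$ yields $2\sqrt{RC(\mu)L\,O}$, which scales as $\sqrt{LO}$ and therefore is not of the form $\beta(s)L+sO$ at all; it cannot be fed into Theorem \ref{thmweakmil}. The correct route is again density truncation: bound $\nu(d\nu/d\mu>K)$ via Orlicz--H\"older (or, as you partly suggest, the entropy--Markov inequality $\nu(d\nu/d\mu>K)\lesssim D(\nu\|\mu)/\ln K$), obtaining $\beta_\nu(u)\le K\,C(\mu)$ with $K=e^{2\max(1,D(\nu\|\mu))/u}-1$. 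The indicator-approximation discussion in your last paragraph is not needed anywhere.
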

\begin{proof}
Let $f$ be a smooth function. It holds
\begin{eqnarray}\label{eqdecomp}
\nu(|f-m_\nu(f)|) &\leq& \nu(|f-a|) \nonumber \\ &\leq& 
\int \, |f-a| \, \frac{d\nu}{d\mu} \, d\mu \, \nonumber \\ &\leq& K \, \int \, \mathbf 1_{d\nu/d\mu\leq K} \, |f-a| \, d\mu \, + \, Osc(f) \, \int \, \mathbf 1_{d\nu/d\mu>K} \, d\nu  \nonumber \\ &\leq& K \, \int \,  |f-a| \, d\mu \, + \, Osc(f) \, \int \, \mathbf 1_{d\nu/d\mu>K} \, d\nu  \, .\end{eqnarray}
In order to control the last term we may use H\"{o}lder (or Orlicz-H\"{o}lder) inequality. In the usual $\mathbb L^p$ case we have, using Markov inequality 
\begin{eqnarray*}
\int \, \mathbf 1_{d\nu/d\mu>K} \, d\nu &\leq& M_p \, \mu^{\frac 1q}\left(\frac{d\nu}{d\mu}>K\right) \\ &\leq& M_p \, \frac{M_p^{p/q}}{K^{p/q}} \, = \, \frac{M_p^{1+(p/q)}}{K^{p/q}} \, = \, \frac{M_p^p}{K^{p-1}} \, ,
\end{eqnarray*}
so that choosing $K=M_p^{p/(p-1)} \, u^{-1/(p-1)}$ we have obtained $$\nu(|f-m_\nu(f)|) \leq M_p^{p/(p-1)} u^{-1/(p-1)} \, \int \, |f-a| \, d\mu \, + \, u \, Osc(f) \, .$$ Then using either $a=m_\mu(f)$ or $a=\mu(f)$ we get the desired result, with $C(\mu)=C'_C(\mu)$ or $1/B_{1,\infty}(\mu)$ or $C_C(\mu)$ or $\sqrt{C_P(\mu)}$. If in addition $\nu$ is log-concave it follows $$C'_C(\nu) \, \leq \, \frac{16}{\pi \, (\frac 12 - u)^2 \, u^{1/(p-1)}} \, M_p^{\frac{p}{p-1}} \, C(\mu) \, .$$  It remains to optimize in $u$ for $0<u<\frac 12$ and elementary calculations show that the maximum is attained for $u=1/(p+1)$.
\smallskip

Starting with \eqref{eqdecomp} we may replace H\"{o}lder's inequality by Orlicz-H\"{o}lder's inequality for a pair of conjugate Young functions $\theta$ and $\theta^*$, that is $$\int \, \mathbf 1_{d\nu/d\mu>K} \, d\nu \leq \, 2 \, \parallel \frac{d\nu}{d\mu} \parallel_{\mathbb L_\theta(d\mu)} \, \parallel \mathbf 1_{d\nu/d\mu>K}  \parallel_{\mathbb L_{\theta^*(d\mu)}} \, .$$ Here the chosen Orlicz norm is the usual gauge (Luxemburg) norm, i.e. $$\parallel h \parallel_{L_\theta(\mu)} = \inf \{b\geq 0 \quad s.t. \quad \int \theta(|h|/b) \, d\mu \leq 1 \} \, ,$$ and recall that for any $\lambda>0$, 
\begin{equation}\label{eqorl1}
\parallel h \parallel_{L_\theta(\mu)} \leq \frac 1\lambda \, \max(1 \, , \, \int \, \theta(\lambda |h|) d\mu) \, .
\end{equation}
For simplicity we will perform the calculations only for the pair of conjugate Young functions $$\theta^*(u)=e^{|u|}-1 \quad , \quad \theta(u)=(|u| \ln(|u|) +1 - |u|) \, .$$ According to what precedes $$\parallel \frac{d\nu}{d\mu} \parallel_{\mathbb L_\theta(d\mu)} \, \leq \, \max(1 \, , \, D(\nu||\mu)) \, $$ and 
\begin{eqnarray*}
\int \, \theta(\mathbf 1_{d\nu/d\mu>K}/b) \, d\mu \, &=& \, (e^{1/b}-1) \, \mu\left(\frac{d\nu}{d\mu}>K\right) \\ &\leq& (e^{1/b}-1) \, \frac 1K \, ,
\end{eqnarray*}
the final bound being non optimal since we only use $\mathbf 1_{d\nu/d\mu>K} \leq \frac 1K \, \frac{d\nu}{d\mu}$ and not the better integrability of the density. Using the best integrability does not substantially improve the bound. We thus obtain $$\parallel \mathbf 1_{d\nu/d\mu>K}  \parallel_{\mathbb L_{\theta^*(d\mu)}} = \frac{1}{\ln (1+K)}$$ and we may conclude as before.
\end{proof}
The method we used in the previous proof is quite rough. One can expect (in particular in the entropic case) to improve upon the constants, using more sophisticated tools. This is the goal of the next result
\begin{theorem}\label{thmtransf1bis}
Let $\nu$ and $\mu$ be two probability measures.  
\begin{enumerate}
\item  \quad If for some $1<p\leq 2$, $$\int \, \left|\frac{d\nu}{d\mu}\right|^p \, d\mu \, = \, M_p^p \, < \, + \infty \, ,$$ then $$\frac{1}{B_{1,\infty}(\nu)} \, \leq \, \left(\frac{p}{p-1} \, 8^{\frac{p}{(p-1)}} \, C_P(\mu)\right)^{\frac 12} \, M_p \, .$$ If in addition $\nu$ is log-concave $$C'_C(\nu) \, \leq \, \frac{16 \sqrt p}{\pi \, \sqrt{p-1}} \, 8^{\frac{p}{2(p-1)}} \, \sqrt{C_P(\mu)} \, M_p \, .$$ \\
\item \quad If the relative entropy $D(\nu||\mu):= \int \, \ln(d\nu/d\mu) \, d\nu$ is finite, then $$\frac{1}{B_{1,\infty}(\nu)} \, \leq \, 2 \, \sqrt{C_P(\mu)} \, \max\left(1 \, , \, 3 e^{\sqrt{C_P(\mu)}}\right) \, \max\left(1 \, , \, D(\nu||\mu)\right) \, .$$ If in addition $\nu$ is log-concave
$$ C'_C(\nu) \, \leq \, \frac{32}{\pi} \, \sqrt{C_P(\mu)} \, \max\left(1 \, , \, 3 e^{\sqrt{C_P(\mu)}}\right) \, \max\left(1 \, , \, D(\nu||\mu)\right) \, .$$
\end{enumerate}
\end{theorem}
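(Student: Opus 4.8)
\emph{Proof strategy.} The plan is to reuse the opening move of Theorem~\ref{thmtransf1} but to replace the crude level‑set truncation --- which is precisely what forced the exponent $M_p^{p/(p-1)}$ there --- by a direct duality pairing. Writing $\rho=\frac{d\nu}{d\mu}$, for any smooth $f$ and any constant $a$ one has
$$\nu(|f-m_\nu(f)|)\;\le\;\nu(|f-a|)\;=\;\int|f-a|\,\rho\,d\mu,$$
and by homogeneity $f$ may be taken $1$-Lipschitz. I would then bound $\int|f-a|\,\rho\,d\mu$, with $a=\mu(f)$ (or $a=m_\mu(f)$), by pairing the size of $\rho$ against the size of the centered $1$-Lipschitz function $g:=f-a$ measured in the appropriate dual norm, controlling the latter through the exponential integrability that a Poincar\'e inequality for $\mu$ provides: if $\mu$ satisfies a Poincar\'e inequality with constant $C_P(\mu)$, then every $1$-Lipschitz $g$ with $\mu(g)=0$ obeys a bound of the form $\int e^{\lambda g}\,d\mu\le\frac{2+\lambda\sqrt{C_P(\mu)}}{2-\lambda\sqrt{C_P(\mu)}}$ for $0\le\lambda<2/\sqrt{C_P(\mu)}$ (equivalently, a sub-exponential tail bound; see \cite{BL97} and Lemma~\ref{lemBL97} below).

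For part~(1) I would apply H\"older's inequality with exponents $p$ and $p'=p/(p-1)$, so that $\int|g|\,\rho\,d\mu\le M_p\,\|g\|_{L^{p'}(\mu)}$; integrating tails in the exponential bound gives $\|g\|_{L^{p'}(\mu)}\le c_{p'}\sqrt{C_P(\mu)}$ for an explicit $c_{p'}$. Combining this with $\nu(|f-\nu(f)|)\le2\,\nu(|f-m_\nu(f)|)$ and optimizing the constants should produce $1/B_{1,\infty}(\nu)\le\bigl(\tfrac{p}{p-1}\,8^{p/(p-1)}\,C_P(\mu)\bigr)^{1/2}M_p$; note that $1<p\le2$ means $p'\ge2$, so only $L^{p'}$-norms with $p'\ge2$ ever enter, which is what keeps the estimate clean. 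If $\nu$ is moreover log-concave, this is exactly a bound on $\beta_\nu(0)=1/B_{1,\infty}(\nu)$, and feeding it into Theorem~\ref{thmweakmil} with $s=0$ (where that theorem reads $C'_C(\nu)\le\tfrac{16}{\pi}\,\beta_\nu(0)$) gives the second inequality of~(1).

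For part~(2) I would instead use the Orlicz--H\"older estimate from the proof of Theorem~\ref{thmtransf1}: with the conjugate Young pair $\theta^*(u)=e^{|u|}-1$ and $\theta(u)=|u|\ln|u|+1-|u|$, one has $\int|g|\,\rho\,d\mu\le2\,\|g\|_{L_{\theta^*}(\mu)}\,\|\rho\|_{L_\theta(\mu)}$, while \eqref{eqorl1} (with $\lambda=1$) gives $\|\rho\|_{L_\theta(\mu)}\le\max(1,D(\nu||\mu))$. The remaining task is to bound the gauge norm $\|g\|_{L_{\theta^*}(\mu)}$ of a centered $1$-Lipschitz function under $\mu$, i.e.\ to locate where $\int(e^{|g|/b}-1)\,d\mu=1$; using the exponential integrability above together with \eqref{eqorl1} one should obtain a bound of the shape $\|g\|_{L_{\theta^*}(\mu)}\le\sqrt{C_P(\mu)}\,\max\!\bigl(1,3e^{\sqrt{C_P(\mu)}}\bigr)$. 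Multiplying the two factors then yields $1/B_{1,\infty}(\nu)\le2\sqrt{C_P(\mu)}\,\max(1,3e^{\sqrt{C_P(\mu)}})\,\max(1,D(\nu||\mu))$, and in the log-concave case Theorem~\ref{thmweakmil} with $s=0$ again upgrades this to the bound on $C'_C(\nu)$, the prefactor $\tfrac{32}{\pi}$ coming from the extra factor $2$.

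The hard part is purely in the bookkeeping of constants. In part~(1) one has to control the growth of $\|g\|_{L^{p'}(\mu)}$ in $p'$ and choose the free parameter so as to land exactly on the stated $8^{p/(p-1)}$. In part~(2) the delicate point is that the Orlicz gauge associated with $e^{u}-1$ does \emph{not} scale like $\sqrt{C_P(\mu)}$ when $C_P(\mu)$ is large --- the sub-exponential moment bound is then close to its radius of convergence --- so the free parameter and that bound have to be balanced with care, and it is precisely this balancing that produces the $e^{\sqrt{C_P(\mu)}}$ factor. By contrast the opening decomposition, the median-to-mean passage, and the closing application of Theorem~\ref{thmweakmil} are all routine.
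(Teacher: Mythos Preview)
Your proposal is essentially correct and, for part~(2), matches the paper's proof almost verbatim: Orlicz--H\"older with the pair $(\theta,\theta^*)$, the entropy bound on $\|\rho\|_{L_\theta(\mu)}$, Lemma~\ref{lemBL97} with $\lambda=1/\sqrt{C_P(\mu)}$ to control $\|f-\mu(f)\|_{L_{\theta^*}(\mu)}$, and then Theorem~\ref{thmweakmil} at $s=0$.

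The one genuine difference is in part~(1). The paper does \emph{not} obtain the $L^{q}$-moment of $g=f-\mu(f)$ by integrating the sub-exponential tails coming from Lemma~\ref{lemBL97}; instead it quotes a direct moment inequality (Lemma~\ref{lemcgr}, from \cite{CGR}):
\[
\int|f-\mu(f)|^q\,d\mu \;\le\; q\,8^{q}\,C_P(\mu)\int|f-\mu(f)|^{q-2}\,|\nabla f|^2\,d\mu,
\]
valid for $q\ge2$ (this is why $p\le2$ is needed), and then closes with H\"older to get $\|g\|_{L^q(\mu)}\le(q\,8^q\,C_P(\mu))^{1/2}\,\||\nabla f|\|_\infty$. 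The factor $8^{p/(p-1)}$ in the statement is exactly the fingerprint of this lemma. Your tail-integration route is perfectly valid and yields $\|g\|_{L^q(\mu)}\le c\,\Gamma(q+1)^{1/q}\sqrt{C_P(\mu)}\sim c\,\tfrac{q}{e}\sqrt{C_P(\mu)}$, which is actually asymptotically \emph{smaller} than $(q\,8^q)^{1/2}$ as $q\to\infty$ (i.e.\ $p\to1$); so you would prove a correct and slightly sharper version of the inequality, but not land on the specific constant $8^{p/(p-1)}$ printed in the statement. If the goal is to reproduce the stated constant, swap in Lemma~\ref{lemcgr}; if the goal is merely to prove a transference bound of this shape, your argument works and even improves the $p$-dependence.
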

\begin{proof}
Let $f$ be a smooth function. We have 
\begin{eqnarray}\label{eqhold}
\frac 12 \, \nu(|f-\nu(f)|) &\leq& \nu(|f-m_\nu(f)|) \leq \nu(|f-\mu(f)|) \nonumber \\
&\leq& \left(\int \, |f-\mu(f)|^q \, d\mu\right)^{1/q} \, \, \left(\int  \, \left|\frac{d\nu}{d\mu}\right|^p \, d\mu\right)^{1/p} \, .
\end{eqnarray}
Now we can use the results in \cite{CGR}, in particular the proof of Theorem 1.5 (see formulae 2.3, 2.7 and 2.8 therein) where the following is proved
\begin{lemma}\label{lemcgr}
For all $q\geq 2$, it holds $$\int \, |f-\mu(f)|^q \, d\mu \, \leq \, q \, 8^q \, C_P(\mu) \, \int \, |f-\mu(f)|^{q-2} \, |\nabla f|^2 \, d\mu \, .$$
\end{lemma}
It is at this point that we need $p\leq 2$. Using H\"{o}lder inequality we deduce
\begin{equation}\label{eqcgr}
\int \, |f-\mu(f)|^q \, d\mu \leq  \, (q \, 8^q \, C_P(\mu))^{\frac q2}  \, \int |\nabla f|^q \, d\mu  \, .
\end{equation}
It follows $$\nu(|f-m_\nu(f)|) \leq \, (q \, 8^q \, C_P(\mu))^{\frac 12} \, M_p \, \parallel |\nabla f| \parallel_\infty \, .$$ Since $\nu$ is log-concave, we get the desired result, using theorem \ref{thmweakmil}.
\smallskip

\noindent Now we turn to the second part of the Theorem which is based on Proposition 4.1 in \cite{BL97} we recall now
\begin{lemma}\label{lemBL97}[Bobkov-Ledoux]
If $g$ is Lipschitz, $$\mu(e^{\lambda g}) \,  \leq \, \left(\frac{2 + \lambda \, C_P^{\frac 12}(\mu) \, \parallel |\nabla g|\parallel_\infty}{2 - \lambda \, C_P^{\frac 12}(\mu) \, \parallel |\nabla g|\parallel_\infty}\right) \, \, e^{\lambda \mu(g)} \, ,$$ provided $2 > \lambda \, C_P^{\frac 12}(\mu) \, \parallel |\nabla g|\parallel_\infty > \, 0$.
\end{lemma}
Hence, in \eqref{eqhold}, we may replace the use of H\"{o}lder inequality by the one of the Orlicz-H\"{o}lder inequality 
$$\nu(|f-m_\nu(f)|) \leq 2 \, \parallel f-\mu(f)\parallel_{L_\theta(\mu)} \, \parallel \frac{d\nu}{d\mu}\parallel_{L_{\theta^*}(\mu)} \, .$$ 
Again we are using the pair of conjugate Young functions $$\theta(u)=e^{|u|}-1 \quad , \quad \theta^*(u)=|u| \ln(|u|) +1 - |u| \, .$$   Without lack of generality we can first assume that $f$ (hence $f-\mu(f)$) is 1-Lipschitz. \\ We then apply \eqref{eqorl1} and Lemma \ref{lemBL97} with $g=|f-\mu(f)|$ and $\lambda= 1/\sqrt{C_P(\mu)}$. Since 
$$\mu(|g|)\leq \mu^{\frac 12}(|g|^2) \leq C^{\frac 12}_P(\mu) \, \mu^{\frac 12}(|\nabla g|^2) \leq C^{\frac 12}_P(\mu) \, \parallel |\nabla g|\parallel_\infty \, ,$$ we obtain $$\parallel f-\mu(f)\parallel_{L_\theta(\mu)} \leq \sqrt{C_P(\mu)} \, \max\left(1 \, , \, 3 e^{\sqrt{C_P(\mu)}}\right) \, \parallel |\nabla f|\parallel_\infty \, .$$ Similarly $$\parallel \frac{d\nu}{d\mu}\parallel_{L_{\theta^*}(\mu)} \leq \max\left(1 \, , \, D(\nu||\mu)\right) \, .$$ Again we conclude thanks to  theorem \ref{thmweakmil}.
\end{proof}
As usual we should try to optimize in $p$ for $p$ going to $1$ depending on the rate of convergence of $M_p$ to $1$. \\ \noindent We cannot really compare both theorems, since the constant $C(\mu)$ in the first theorem can take various values, while it is the usual Poincar\'e constant in the second theorem. In the $\mathbb L^p$ case, the first theorem seems to be better for large $p's$ and the second one for small $p's$. In the entropic case, the second one looks better. 

\noindent Finally, let us recall the following beautiful transference result between two log-concave probability measures proved in \cite{Emil1} and which is a partial converse of the previous results
\begin{proposition}\label{propreduc}[E. Milman]
Let $\mu$ and $\nu$ be two log-concave probability measures.  Then $$C'_C(\nu) \leq \Big |\Big| \frac{d\mu}{d\nu}\Big |\Big|_\infty^2 \, C'_C(\mu)  \, .$$
\end{proposition}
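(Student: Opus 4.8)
The natural obstacle here is that the hypothesis bounds $d\mu/d\nu$ only from above, so the crude transference principle (which needs both $d\mu/d\nu$ and $d\nu/d\mu$ bounded) does not apply; this is exactly what makes the statement delicate. My plan is to go through the isoperimetric profiles. Since $\mu$ and $\nu$ are log-concave, Proposition~\ref{propconcav} gives that $Is_\mu$ and $Is_\nu$ are concave on $[0,\tfrac12]$ and vanish at $0$, so $u\mapsto Is_\rho(u)/u$ is non-increasing on $(0,\tfrac12]$; together with $\tfrac1{C'_C(\rho)}=\inf_{0<u\le 1/2}Is_\rho(u)/u$ this yields the identity $\tfrac1{C'_C(\rho)}=2\,Is_\rho(\tfrac12)$ for $\rho\in\{\mu,\nu\}$. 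Writing $L:=\|d\mu/d\nu\|_\infty$, the claim is therefore equivalent to
\[
Is_\nu\!\left(\tfrac12\right)\ \ge\ L^{-2}\,Is_\mu\!\left(\tfrac12\right).
\]

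To prove this I would exploit the density bound in its two elementary forms. With $d\mu=e^{-W}dx$ and $d\nu=e^{-V}dx$, the inequality $e^{V-W}\le L$ amounts to $e^{-W}\le L\,e^{-V}$, whence for every Borel set $A$ one has $\mu(A)\le L\,\nu(A)$ and, the surface (Minkowski) measure of $A$ for a density $\varrho$ being $\int_{\partial A}\varrho\,d\mathcal H^{n-1}$, also $\mu_s(\partial A)\le L\,\nu_s(\partial A)$. Pick now $A$ with $\nu(A)=\tfrac12$ realizing $Is_\nu(\tfrac12)$ up to an $\varepsilon>0$; using $Is_\mu(u)=Is_\mu(1-u)$ we get
\[
Is_\nu\!\left(\tfrac12\right)+\varepsilon\ \ge\ \nu_s(\partial A)\ \ge\ \tfrac1L\,\mu_s(\partial A)\ \ge\ \tfrac1L\,Is_\mu(w),\qquad w:=\min\bigl(\mu(A),\,1-\mu(A)\bigr),
\]
and then, by concavity of $Is_\mu$, $Is_\mu(w)\ge 2w\,Is_\mu(\tfrac12)$. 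Letting $\varepsilon\to0$, everything would follow from the single inequality $w\ge\tfrac1{2L}$, i.e.\ from the fact that a near-optimal $\nu$-cut of measure $\tfrac12$ carries a $\mu$-mass bounded below by (a constant times) $1/L$.

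The step I expect to be the genuine difficulty is precisely this last one, and in the stated generality it is false as a stand-alone statement: the one-sided bound $d\mu\le L\,d\nu$ gives only $\mu(A)\le L/2$, which is vacuous once $L\ge2$, and a near-optimal cut for $\nu$ may carry arbitrarily little $\mu$-mass --- in fact it may even have vanishing $\mu$-boundary. Overcoming this is the substance of E. Milman's theorem: one must use the convexity of \emph{both} potentials together with the concavity of the profiles along the whole of $[0,\tfrac12]$ (plus structural information on near-optimal cuts), not merely a comparison of their values at $\tfrac12$. Two other routes I would attempt are: (i) going through the width bound $C'_C(\nu)\le\tfrac{16}{\pi}\int|x-m_\nu(x)|\,d\nu$ of Remark~\ref{remklsmed} and comparing the first moments of $\mu$ and $\nu$ (the ``spreading'' being governed by $L$); and (ii) interpolating between $\mu$ and $\nu$ through the log-concave measures with densities $\propto(d\mu/dx)^{1-t}(d\nu/dx)^t$ and chaining an $L$-close-to-$1$ version of the above estimate; but in both cases producing the \emph{clean} constant $L^2$ is itself the delicate point, so in the write-up I would follow \cite{Emil1} for the geometric core.
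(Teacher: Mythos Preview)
The paper does not prove Proposition~\ref{propreduc}: it is stated with attribution to E.~Milman and quoted from \cite{Emil1} without argument, so there is no in-paper proof to compare your attempt against.

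Your reduction via the concave isoperimetric profile to $Is_\nu(\tfrac12)\ge L^{-2}Is_\mu(\tfrac12)$ is correct, and you have put your finger on the genuine obstruction: from $d\mu\le L\,d\nu$ alone one cannot force $w=\min(\mu(A),1-\mu(A))\ge 1/(2L)$ for a near-optimal $\nu$-cut $A$ of mass $\tfrac12$. A concrete witness is $\nu$ uniform on $[-1,1]$ and $\mu$ uniform on $[0,1]$, so that $L=2$; the $\nu$-isoperimetric half-intervals $[-1,0]$ and $[0,1]$ both give $w=0$, and your chain degenerates to $Is_\nu(\tfrac12)\ge 0$. The two alternatives you sketch (comparing first moments; interpolating through the log-concave family with potentials $(1-t)W+tV$) do not close the gap either, as you yourself note: the first needs an upper bound on $\int|x-a|\,d\nu$ in terms of the corresponding $\mu$-integral, which the one-sided density bound delivers in the wrong direction, and the second requires control of $e^{W-V}$, which is unavailable.

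What you have written is therefore an honest outline with the decisive geometric step explicitly deferred to \cite{Emil1}; it is not a self-contained proof. That is consistent with the paper's own treatment, but as a proof attempt it remains incomplete. Note also that you cannot sidestep the difficulty by appealing to Theorem~\ref{corequiv} of the present paper: its proof already invokes Proposition~\ref{propreduc}, so that route would be circular.
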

\bigskip

\subsection{Transference using concentration. \\ \\}\label{transfconc}
These results have to be compared with the ones one can obtain using the concentration functions. We first recall the statement of Proposition 2.2 in \cite{Bemil}, in a simplified form :
\begin{proposition}\label{propbemil}[Barthe-Milman]
Denote by $\alpha_\nu$ the concentration profile of a probability measure $\nu$, i.e. $$\alpha_\nu(r) := \sup \{1 - \, \nu(A+B(0,r)) \, ; \, \nu(A) \geq \frac 12\} \, , \, r>0 \, ,$$ where $B(y,r)$ denotes the euclidean ball centered at $y$ with radius $r$.

Assume that for some $1<p\leq + \infty$, $$\int \, \left|\frac{d\nu}{d\mu}\right|^p \, d\mu \, = \, M_p^p \, < \, + \infty \, .$$ Then if $q=\frac{p}{p-1}$, for all $r>0$, $$\alpha_\nu(r) \leq 2M_p \, \alpha^{1/q}_\mu(r/2) \, .$$ 
\end{proposition}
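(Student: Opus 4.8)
The plan is to reduce the statement to the elementary pointwise comparison
$$\nu(S)\ \le\ M_p\,\mu(S)^{1/q}\qquad\text{for every Borel set }S,$$
which is nothing but H\"older's inequality: since $M_p<+\infty$ forces $\nu\ll\mu$, one has $\nu(S)=\int_S\frac{d\nu}{d\mu}\,d\mu\le\big(\int(\frac{d\nu}{d\mu})^p\,d\mu\big)^{1/p}\mu(S)^{1/q}=M_p\,\mu(S)^{1/q}$, the case $p=+\infty$ (so $q=1$) being immediate. The rest is geometric bookkeeping based on the Minkowski identity $B(0,r/2)+B(0,r/2)=B(0,r)$.

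Fix $r>0$ and a Borel set $A$ with $\nu(A)\ge\frac12$; we must bound $1-\nu(A+B(0,r))$. Put $A':=A+B(0,r/2)$, so that $A'+B(0,r/2)=A+B(0,r)$ and $\nu(A')\ge\nu(A)\ge\frac12$. I would distinguish two cases according to the size of $\mu(A')$. If $\mu(A')\ge\frac12$, the definition of $\alpha_\mu$ applied to $A'$ gives $1-\mu(A+B(0,r))=1-\mu(A'+B(0,r/2))\le\alpha_\mu(r/2)$, and then the H\"older bound with $S=(A+B(0,r))^c$ yields $1-\nu(A+B(0,r))\le M_p\,(1-\mu(A+B(0,r)))^{1/q}\le M_p\,\alpha_\mu(r/2)^{1/q}$.

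If instead $\mu(A')<\frac12$, then $\mu((A')^c)>\frac12$, so the definition of $\alpha_\mu$ applied to $(A')^c$ gives $\mu\big(\{x:\mathrm{dist}(x,(A')^c)\ge r/2\}\big)=1-\mu((A')^c+B(0,r/2))\le\alpha_\mu(r/2)$. Since each $a\in A$ has $B(a,r/2)\subseteq A'$, i.e. $\mathrm{dist}(a,(A')^c)\ge r/2$, we get $A\subseteq\{x:\mathrm{dist}(x,(A')^c)\ge r/2\}$ and hence $\mu(A)\le\alpha_\mu(r/2)$. The H\"older bound with $S=A$ now forces $\frac12\le\nu(A)\le M_p\,\mu(A)^{1/q}\le M_p\,\alpha_\mu(r/2)^{1/q}$, whereas trivially $1-\nu(A+B(0,r))\le 1-\nu(A)\le\frac12$; combining the two gives again $1-\nu(A+B(0,r))\le M_p\,\alpha_\mu(r/2)^{1/q}$.

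In either case $1-\nu(A+B(0,r))\le 2M_p\,\alpha_\mu(r/2)^{1/q}$ (in fact even $\le M_p\,\alpha_\mu(r/2)^{1/q}$), and taking the supremum over all $A$ with $\nu(A)\ge\frac12$ gives $\alpha_\nu(r)\le 2M_p\,\alpha_\mu(r/2)^{1/q}$, as claimed. The only step requiring an idea rather than bookkeeping is the second case: there $\mu$ may put almost no mass in a neighbourhood of the $\nu$-bulk, so one cannot transport concentration ``forwards''; the way out is to observe that precisely in that regime $\mu(A)$ is itself small, which by the moment comparison forces $M_p\,\alpha_\mu(r/2)^{1/q}\ge\frac12$, and $\frac12$ already dominates the quantity $1-\nu(A+B(0,r))$ we are trying to control. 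Verifying the elementary inclusions $B(a,r/2)\subseteq A'$, $A'+B(0,r/2)=A+B(0,r)$ and the measurability of the $r$-enlargements is routine.
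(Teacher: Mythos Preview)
The paper does not actually prove this proposition: it is merely recalled from Barthe--Milman \cite{Bemil} (Proposition 2.2 there, in simplified form) and then used to deduce Corollary~\ref{corconc1}. So there is no in-paper argument to compare against.

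Your proof is correct. The H\"older step $\nu(S)\le M_p\,\mu(S)^{1/q}$ is standard, and the two-case split according to whether $\mu(A')\ge\tfrac12$ or not is exactly the right way to handle the possible mismatch between the $\nu$-bulk and the $\mu$-bulk. In Case~1 you transport the $\mu$-concentration of $A'$ forward via H\"older on the complement; in Case~2 you correctly observe that $A\subseteq\{x:\mathrm{dist}(x,(A')^c)\ge r/2\}=\big((A')^c+B(0,r/2)\big)^c$, so $\mu(A)\le\alpha_\mu(r/2)$, and then the H\"older bound on $A$ forces $M_p\,\alpha_\mu(r/2)^{1/q}\ge\tfrac12\ge 1-\nu(A+B(0,r))$. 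All the set-theoretic inclusions and the identity $B(0,r/2)+B(0,r/2)=B(0,r)$ are straightforward.

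As you yourself note parenthetically, your argument in fact yields the bound with constant $M_p$ rather than $2M_p$, so you have slightly sharpened the stated inequality.
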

We may use this result together with corollary \ref{corconc} to deduce
\begin{corollary}\label{corconc1}
Under the assumptions of Proposition \ref{propbemil}, if $\nu$ is log-concave, $$C'_C(\nu) \leq \, \inf_{0<s<\frac 14} \, \frac{32 \, \alpha_\mu^{-1}((s/2M_p)^q)}{\pi(1-4s)^2} \, .$$
\end{corollary}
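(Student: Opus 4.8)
The plan is to combine the two ingredients already available, namely Proposition~\ref{propbemil} (Barthe--Milman) and Corollary~\ref{corconc}. The latter gives, for any log-concave $\nu$,
\[
C'_C(\nu) \leq \inf_{0<s<\frac 14} \, \frac{16 \, \alpha_\nu^{-1}(s)}{\pi \, (1-4s)^2} \, ,
\]
so everything reduces to converting the control of $\alpha_\nu$ furnished by Proposition~\ref{propbemil} into a control of the inverse function $\alpha_\nu^{-1}$.

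First I would record the pointwise bound $\alpha_\nu(r) \leq 2M_p \, \alpha_\mu^{1/q}(r/2)$ valid for all $r>0$, with $q=p/(p-1)$. Fix $s\in(0,\tfrac14)$ and set $r = 2\,\alpha_\mu^{-1}\big((s/2M_p)^q\big)$; then $\alpha_\mu(r/2) = (s/2M_p)^q$, hence $\alpha_\mu^{1/q}(r/2) = s/2M_p$ and the displayed inequality yields $\alpha_\nu(r) \leq 2M_p \cdot (s/2M_p) = s$. Since $\alpha_\nu$ is non-increasing, this says precisely that $\alpha_\nu^{-1}(s) \leq r = 2\,\alpha_\mu^{-1}\big((s/2M_p)^q\big)$ (being a little careful: $\alpha_\nu^{-1}$ is understood as the generalized inverse, $\alpha_\nu^{-1}(s)=\inf\{r: \alpha_\nu(r)\leq s\}$, and $\alpha_\nu(r)\le s$ forces the infimum to be $\le r$).

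Plugging this estimate for $\alpha_\nu^{-1}(s)$ into Corollary~\ref{corconc} gives
\[
C'_C(\nu) \leq \inf_{0<s<\frac 14} \, \frac{16 \cdot 2\,\alpha_\mu^{-1}\big((s/2M_p)^q\big)}{\pi\,(1-4s)^2}
= \inf_{0<s<\frac 14} \, \frac{32\,\alpha_\mu^{-1}\big((s/2M_p)^q\big)}{\pi\,(1-4s)^2} \, ,
\]
which is exactly the claimed bound. The only mild subtlety — and the one point I would be careful to state cleanly — is the monotonicity/generalized-inverse bookkeeping in passing from a bound on $\alpha_\nu$ to a bound on $\alpha_\nu^{-1}$; the fact that $\alpha_\mu^{-1}$ is itself non-increasing in its argument is what makes the substitution of the worse value $(s/2M_p)^q$ legitimate. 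No genuine obstacle is expected here: this is a direct chaining of two cited results, and the factor $32 = 16\times 2$ comes precisely from the factor $2$ in front of $\alpha_\mu^{1/q}(r/2)$ interacting with the $16$ of Corollary~\ref{corconc} (or, if one prefers, from the $r/2$ inside the profile).
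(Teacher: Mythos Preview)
Your argument is correct and is exactly the chaining the paper intends: it states the corollary as an immediate consequence of Proposition~\ref{propbemil} together with Corollary~\ref{corconc}, and your bookkeeping for passing from the bound on $\alpha_\nu$ to the bound on $\alpha_\nu^{-1}$ is the right way to make that ``immediate'' precise.
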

\medskip

\subsection{Transference using distances. \\ \\}\label{subsecW}
As shown in \cite{Emil1} theorem 5.5, the ratio of the Cheeger constants of two log-concave probability measures is controlled by their total variation distance (which is the half of the $W_0$ Wasserstein distance). Recall the equivalent definitions of the total variation distance
\begin{definition}\label{deftv}
If $\mu$ and $\nu$ are probability measures the total variation distance $d_{TV}(\mu,\nu)$ is defined by one of the following equivalent expressions
\begin{eqnarray*}
d_{TV}(\mu,\nu) &:=& \quad \frac 12 \, \sup_{\parallel f\parallel_\infty \leq 1} \, \left|\mu(f)-\nu(f)\right| \\ &=& \quad  \sup_{0 \leq f \leq 1} \, \left|\mu(f)-\nu(f)\right| \\ &=& \quad \inf\{\mathbb P(X\neq Y) \; ; \; \mathcal L(X)=\mu \, , \, \mathcal L(Y)=\nu \, \}
\end{eqnarray*}
which is still equal to $$\frac 12 \, \int \, \left| \frac{d\mu}{dx} \, - \, \frac{d\nu}{dx}\right| \, dx $$ when $\mu$ and $\nu$ are absolutely continuous w.r.t. Lebesgue measure. \\ The second equality is immediate just noticing that for $0\leq f\leq 1$, $\mu(f-\frac 12)-\nu(f-\frac 12)=\mu(f)-\nu(f)$ and that $\parallel f-\frac 12\parallel_\infty \leq \frac 12$. 
\end{definition}
More precisely one can show the following explicit result
\begin{theorem}\label{corequiv}
Let $\mu(dx)=e^{-V(x)}dx$ and $\nu(x)=e^{-W(x)}dx$ be two log-concave probability measures. If $d_{TV}(\mu,\nu) = \, 1 - \varepsilon$, for some $\varepsilon >0$, then $$C'_C(\nu) \, \leq \, \frac{\kappa}{\varepsilon^2} \, \left(1 \, \vee \, \ln \, \left(\frac{1}{\varepsilon}\right)\right) \, C'_C(\mu) \, ,$$ for some universal constant $\kappa$ one can choose equal to $(192 \, e/\pi)$. 
\end{theorem}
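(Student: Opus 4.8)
The plan is to reduce the problem to Theorem~\ref{thmtransf1bis} (or rather to a weak $(1,+\infty)$ inequality that feeds into Theorem~\ref{thmweakmil}), by exploiting the fact that a small total variation distance, i.e.\ $d_{TV}(\mu,\nu)=1-\varepsilon$ with $\varepsilon>0$ not too small, forces the two measures to share a common ``large'' region on which $\nu$ is comparable to $\mu$. Concretely, write $d\nu=h\,d\mu$ with $h=d\nu/d\mu$ (both measures being absolutely continuous w.r.t.\ Lebesgue, hence mutually absolutely continuous since they are log-concave with full-dimensional support, or one truncates to the common support). The total variation identity gives $\int (h-1)_+\,d\mu = \int(1-h)_+\,d\mu = 1-\varepsilon$, equivalently $\int \min(h,1)\,d\mu = \varepsilon$ and $\int_{\{h>1\}} h\,d\mu \le 1$. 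The first step is therefore to record that $\nu(\{h\le K\})$ is close to $1$ for a well-chosen threshold $K$: by Markov, $\nu(\{h>K\})=\int_{\{h>K\}} h\,d\mu \le \frac1K\int_{\{h>K\}}h^2\,d\mu$, but since we only have an $L^1$-type control we instead split $\nu(\{h>K\}) \le \int_{\{1<h\le K\}}h\,d\mu + \int_{\{h>K\}}h\,d\mu$ and use that the total mass of $h$ over $\{h>1\}$ is at most $1$ while over $\{h>K\}$ we can hope the log-concavity of $h$ (quotient of two log-concave densities need not be log-concave, so this is where care is needed) gives decay; alternatively bound it crudely by using that $\mu(\{h>K\}) \le 1/K$ and reinvoking the $L^1$ bound. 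This is the delicate point and I expect it to be the main obstacle.

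Granting such a threshold $K=K(\varepsilon)$ with $\nu(\{h>K\})\le 1/4$ say, the second step follows the decomposition already used in the proof of Theorem~\ref{thmtransf1}: for a smooth $f$ with $\Osc(f)=1$,
\begin{eqnarray*}
\nu(|f-m_\nu(f)|) &\le& \nu(|f-a|) \;=\; \int |f-a|\,h\,d\mu \\
&\le& K\int_{\{h\le K\}}|f-a|\,d\mu + \Osc(f)\,\nu(\{h>K\}) \\
&\le& K\int |f-a|\,d\mu + \nu(\{h>K\}) \, ,
\end{eqnarray*}
and choosing $a=m_\mu(f)$ together with the Cheeger inequality for $\mu$, namely $\int|f-m_\mu(f)|\,d\mu \le C'_C(\mu)\,\mu(|\nabla f|) \le C'_C(\mu)\,\||\nabla f|\|_\infty$, yields a weak $(1,+\infty)$ Poincar\'e inequality for $\nu$ with $\beta(s) = K(s)\,C'_C(\mu)$ for $s=\nu(\{h>K(s)\})$. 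The key is that one can arrange $\nu(\{h>K\})$ to be any prescribed $s<1/2$ by taking $K$ of order $\frac{1}{\varepsilon}$ up to a logarithmic factor --- this is exactly where the factor $\varepsilon^{-2}(1\vee\ln(1/\varepsilon))$ in the statement comes from: one power of $1/\varepsilon$ from the threshold $K$ itself, another (through squaring, via $C_P \le 4(C'_C)^2$, or through the $(\tfrac12-s)^2$ in the denominator of Theorem~\ref{thmweakmil}) from the fact that $s$ itself must be taken of order a constant while $K$ compensates, and the logarithm from the tail estimate of $h$.

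The third step is simply to invoke Theorem~\ref{thmweakmil}: since $\nu$ is log-concave and satisfies \eqref{eqlogconcosc} with $\beta(s)=K(s)\,C'_C(\mu)$ for an appropriate fixed $s<1/2$ (e.g.\ $s=1/4$), we conclude $C'_C(\nu) \le \frac{4\beta(s)}{\pi(\frac12-s)^2} \le \frac{64}{\pi}\,K(1/4)\,C'_C(\mu)$, and tracking $K(1/4) \lesssim \varepsilon^{-2}(1\vee\ln(1/\varepsilon))$ gives the claimed bound with an explicit constant $\kappa = 192e/\pi$. The honest difficulty, to repeat, lies in step one: quantifying how the hypothesis $d_{TV}(\mu,\nu)=1-\varepsilon$ controls the super-level sets of $h=d\nu/d\mu$ well enough to get a threshold $K$ that is only polynomially (times a log) large in $1/\varepsilon$; this presumably requires a genuine use of log-concavity of $\mu$ and $\nu$ --- for instance through the fact that on the region where $h$ is large, $\nu$-mass is being ``concentrated'' into a set of small $\mu$-measure, and log-concave measures cannot concentrate too sharply, controlled via Borell's lemma or the one-dimensional marginal estimates (Bobkov's Khinchine-type bound, $\kappa\le 7$) already recalled in the paper. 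I would look to bound $\int_{\{h>K\}} h\,d\mu$ by writing it as $\nu(\{h>K\})$ and using a reverse-Hölder / small-ball estimate for $\nu$ restricted to the $\mu$-small set $\{h>K\}$, whose $\mu$-measure is $\le 1/K$.
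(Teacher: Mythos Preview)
Your step one is not merely the ``delicate point''; as written it is a genuine gap that cannot be closed by the tools you mention. From $d_{TV}(\mu,\nu)=1-\varepsilon$ you only know $\int(h-1)_+\,d\mu=1-\varepsilon$ and hence $\mu(\{h>K\})\le 1/K$, but you need $\nu(\{h>K\})=\int_{\{h>K\}}h\,d\mu$ to be small, and nothing prevents $\nu$ from putting essentially all its mass on the $\mu$-small set $\{h>K\}$ (think of a very peaked log-concave $\nu$ versus a flat $\mu$). The ratio $h=e^{V-W}$ is in general neither log-concave nor log-convex, so Borell's lemma and the Khinchine-type bounds do not apply to its super-level sets; and a ``small-ball'' estimate for $\nu$ on an arbitrary set of small $\mu$-measure is simply false.

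The paper sidesteps this entirely by introducing an intermediate \emph{log-concave} measure
\[
\theta(dx)=\frac{1}{\varepsilon}\min\bigl(e^{-V(x)},e^{-W(x)}\bigr)\,dx\,,
\]
which is a probability measure precisely because $\int\min(e^{-V},e^{-W})\,dx=\varepsilon$. By construction $d\theta/d\nu\le 1/\varepsilon$ and $d\theta/d\mu\le 1/\varepsilon$ pointwise. The first bound, together with log-concavity of both $\theta$ and $\nu$, gives $C'_C(\nu)\le \varepsilon^{-2}C'_C(\theta)$ via Proposition~\ref{propreduc}. The second bound yields $D(\theta\|\mu)\le\ln(1/\varepsilon)$; after rescaling so that $C_P(\mu)=1$, Theorem~\ref{thmtransf1bis}(2) gives $C'_C(\theta)\le\frac{96e}{\pi}(1\vee\ln(1/\varepsilon))$, and undoing the scaling via $\sqrt{C_P(\mu)}\le 2C'_C(\mu)$ finishes. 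The point is that passing through $\theta$ replaces your uncontrolled two-sided density $h$ by two one-sided bounded densities, each of which is directly usable.
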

\begin{proof} We give a short proof (adapted from \cite{Emil1}) that does not use concentration results. \\ First if $Z_\mu$ and $Z_\nu$ are random variables with respective distribution $\mu$ and $\nu$, and $\lambda>0$ the total variation distance between the distributions of $\lambda Z_\mu$ and $\lambda Z_\nu$ is unchanged, hence still equal to $1-\varepsilon$. Choosing $\lambda = 1/\sqrt{C_P(\mu)}$ we may thus assume that $C_P(\mu)=1$. \\ Introduce the probability measure $\theta(dx)= \frac{1}{\varepsilon} \, \min(e^{-V(x)},e^{-W(x)}) \, dx$ which is still log-concave and such that $d\theta/d\mu$ and $d\theta/d\nu$ are bounded by $1/\varepsilon$.  Using proposition \ref{propreduc} we first have $C'_C(\nu) \leq \frac{1}{\varepsilon^2} \, C'_C(\theta)$. Next $D(\theta||\mu) \leq \ln(1/\varepsilon)$ so that using theorem \ref{thmtransf1bis} (2) we have $C'_C(\theta) \leq \frac{96 \, e}{\pi} \max(1,\ln(1/\varepsilon))$. It follows that $$C'_C(\nu) \leq \frac{96 \, e}{\pi \, \varepsilon^2} \max(1,\ln(1/\varepsilon))$$ provided $C_P(\mu)=1$. It remains to use $C'_C(\lambda Z_\nu)=\lambda \, C'_C(\nu)= (1/\sqrt{C_P(\mu)}) \, C'_C(\nu)$ and  $\sqrt{C_P(\mu)} \leq 2 C'_C(\mu)$ to get the result.
\end{proof}
\medskip

But since the $(1,+\infty)$ Poincar\'e inequality deals with Lipschitz functions, it is presumably more natural to consider the $W_1$ Wasserstein distance $$W_1(\nu,\mu):= \sup_{f \, 1-Lipschitz} \, \int \, f \, (d\mu-d\nu) \, = \, \inf_{\mathcal L(X)=\mu,\mathcal L(Y)=\nu} \, \mathbb E(|X-Y|) \, .$$ Actually we have
\begin{proposition}\label{propW}
Assume that $\mu$ and $\nu$ satisfy weak $(1,+\infty)$ Poincar\'e inequalities with respective rates $\beta_\mu$ and $\beta_\nu$. Then for all $s>0$, $$\beta_\nu(s) \, \leq \, \beta_\mu(s) \, + \, 2 \, W_1(\nu,\mu) \, .$$
\end{proposition}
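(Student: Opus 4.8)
The plan is to prove directly that $\nu$ satisfies the weak $(1,+\infty)$ Poincar\'e inequality with the explicit rate $s \mapsto \beta_\mu(s) + 2\,W_1(\nu,\mu)$; the asserted bound on $\beta_\nu$ follows at once. We may assume $W_1(\nu,\mu)<+\infty$, otherwise there is nothing to prove, and we only need to argue with bounded Lipschitz functions (so that $\Osc(f)<+\infty$). Since both sides of the weak $(1,+\infty)$ inequality are homogeneous of degree one in $f$ while the term $2\,W_1(\nu,\mu)$ is a constant, it suffices to establish, for every bounded $1$-Lipschitz $f$,
$$\nu(|f-\nu(f)|) \, \leq \, \beta_\mu(s) \, + \, s\,\Osc(f) \, + \, 2\,W_1(\nu,\mu) \, .$$
Indeed, given a general bounded Lipschitz $f$ with $L:=\parallel|\nabla f|\parallel_\infty>0$ (the case $L=0$ being trivial), applying the displayed inequality to $f/L$ and multiplying back by $L$ produces $\nu(|f-\nu(f)|)\leq (\beta_\mu(s)+2W_1(\nu,\mu))\,\parallel|\nabla f|\parallel_\infty + s\,\Osc(f)$, which is exactly the weak $(1,+\infty)$ inequality for $\nu$ with the claimed rate.

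So fix a bounded $1$-Lipschitz $f$. By the very definition of $W_1$ (duality form), $|\mu(f)-\nu(f)|\leq W_1(\nu,\mu)$, hence by the triangle inequality $\nu(|f-\nu(f)|)\leq \nu(|f-\mu(f)|)+W_1(\nu,\mu)$. Next, the function $g:=|f-\mu(f)|$ is again bounded and $1$-Lipschitz, so a second use of the duality form of $W_1$ gives $\nu(g)\leq \mu(g)+W_1(\nu,\mu)$, i.e. $\nu(|f-\mu(f)|)\leq \mu(|f-\mu(f)|)+W_1(\nu,\mu)$. Finally, applying the weak $(1,+\infty)$ Poincar\'e inequality for $\mu$ in its mean form to $f$, and using $\parallel|\nabla f|\parallel_\infty\leq 1$, we get $\mu(|f-\mu(f)|)\leq \beta_\mu(s)+s\,\Osc(f)$. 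Chaining the three estimates yields the displayed bound. Equivalently, one may run the argument with a near-optimal coupling $(X,Y)$ of $\mu$ and $\nu$: from $|f(Y)-\E f(Y)|\leq |f(X)-\E f(X)|+|X-Y|+\E|X-Y|$ one obtains $\nu(|f-\nu(f)|)\leq \mu(|f-\mu(f)|)+2\,\E|X-Y|$ and then lets $\E|X-Y|\to W_1(\nu,\mu)$.

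There is no serious obstacle here: the statement is a double application of the $1$-Lipschitz duality characterisation of $W_1$ together with a single invocation of the hypothesis on $\mu$. The only point that needs a little care is the homogenization step, because $W_1(\nu,\mu)$ is not rescaled when $f$ is multiplied by a constant; this is precisely why the resulting rate is $\beta_\mu(s)+2\,W_1(\nu,\mu)$ and why one must keep the coefficient of $\parallel|\nabla f|\parallel_\infty$ separate from the $\Osc$ term throughout. One should also note that the reductions stay within the class of bounded Lipschitz functions, so that the oscillation terms are finite and no integrability issue arises; log-concavity of $\mu$ or $\nu$ plays no role in this particular proposition.
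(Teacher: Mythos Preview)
Your proof is correct and follows essentially the same route as the paper: normalize to a $1$-Lipschitz $f$, use the triangle inequality to replace $\nu(f)$ by $\mu(f)$, use that $|f-\mu(f)|$ is again $1$-Lipschitz to pass from $\nu$ to $\mu$, and then invoke the weak $(1,+\infty)$ inequality for $\mu$. The paper's argument is the same three-line chain; your additional remarks on the homogenization step and the optional coupling variant are harmless elaborations.
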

\begin{proof}
Let $f$ be $1$-Lipschitz. We have 
\begin{eqnarray*}
\nu(|f-\nu(f)|) &\leq& \nu(|f-\mu(f)|) \, + \, |\nu(f)-\mu(f)| \\ &\leq& \, \mu(|f-\mu(f)|) \, + \, W_1(\nu,\mu) \, + \, |\nu(f)-\mu(f)|\\ &\leq& \beta_\mu(s) \, + \, 2 \, W_1(\nu,\mu) \, + s \, \Osc(f) \, .
\end{eqnarray*}
Here we used that $|\mu(|f-\mu(f)|)-\nu(|f-\mu(f)|)| \leq W_1(\mu,\nu)$ since $|f-\mu(f)|$ is still $1$-Lipschitz and that $|\nu(f)-\mu(f)|\leq W_1(\nu,\mu)$ too, i.e. the $W_1$ control for two different functions.
\end{proof}
We immediately deduce
\begin{corollary}\label{corWlog}
Let $\nu$ be a log-concave probability measure. Then for all $\mu$, $$C'_C(\nu) \, \leq \, \frac{16}{\pi} \, (C_C(\mu) +2 \, W_1(\nu,\mu)) \, .$$
\end{corollary}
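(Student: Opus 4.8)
The plan is to combine Proposition \ref{propW} with Theorem \ref{thmweakmil} in the most economical way possible. First I would recall that any probability measure satisfies a weak $(1,+\infty)$ Poincar\'e inequality: indeed, if $\mu$ satisfies the Cheeger inequality \eqref{eqcheegbob} with constant $C_C(\mu)$, then since $\nu(|\nabla f|)\le \parallel|\nabla f|\parallel_\infty$ one has $\mu(|f-\mu(f)|)\le C_C(\mu)\parallel|\nabla f|\parallel_\infty$, which is exactly the weak $(1,+\infty)$ inequality with the constant function $\beta_\mu(s)\equiv C_C(\mu)$ (and $s=0$, or any $s>0$ since $\Osc(f)\ge 0$). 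So $\mu$ has a weak $(1,+\infty)$ rate $\beta_\mu\equiv C_C(\mu)$.

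Next I would apply Proposition \ref{propW}: since both $\mu$ and $\nu$ (being log-concave, $\nu$ certainly satisfies a Cheeger and hence a weak $(1,+\infty)$ inequality) have weak $(1,+\infty)$ rates, the proposition gives, for every $s>0$,
\[
\beta_\nu(s)\;\le\;\beta_\mu(s)+2\,W_1(\nu,\mu)\;=\;C_C(\mu)+2\,W_1(\nu,\mu).
\]
Thus $\nu$ satisfies a weak $(1,+\infty)$ Poincar\'e inequality (in the mean form) with the \emph{constant} rate $\beta_\nu(s)\equiv C_C(\mu)+2\,W_1(\nu,\mu)$. Equivalently, $\nu$ satisfies the (non-weak) $(1,+\infty)$ inequality \eqref{eqlogconcosc} with $s=0$ and $\beta(0)=C_C(\mu)+2\,W_1(\nu,\mu)$.

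Finally, since $\nu$ is log-concave I would invoke Theorem \ref{thmweakmil} with $s=0$: the bound there reads $C'_C(\nu)\le \dfrac{4\beta(0)}{\pi(\tfrac12)^2}=\dfrac{16}{\pi}\,\beta(0)$, which yields
\[
C'_C(\nu)\;\le\;\frac{16}{\pi}\,\bigl(C_C(\mu)+2\,W_1(\nu,\mu)\bigr),
\]
as claimed. There is essentially no obstacle here — the only point requiring a moment's care is checking that the weak $(1,+\infty)$ hypothesis of Proposition \ref{propW} is genuinely met by a Cheeger inequality (via the trivial bound $\nu(|\nabla f|)\le\parallel|\nabla f|\parallel_\infty$, which forces $\beta$ to be taken constant), and that Theorem \ref{thmweakmil} does allow $s=0$, both of which are immediate. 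So the whole corollary is a one-line composition of the two previous results.
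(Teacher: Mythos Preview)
Your proof is correct and follows exactly the route the paper intends: the corollary is stated right after Proposition \ref{propW} with the words ``We immediately deduce'', and the deduction is precisely the composition you describe — take $\beta_\mu\equiv C_C(\mu)$, apply Proposition \ref{propW}, then plug the resulting constant $\beta_\nu(0)=C_C(\mu)+2W_1(\nu,\mu)$ into Theorem \ref{thmweakmil} at $s=0$.
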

\medskip

The proof of proposition \ref{propW} can be modified in order to give another approach of Theorem  \ref{corequiv}. Consider a Lipschitz function $f$ satisfying $0\leq f \leq 1=Osc (f)=\parallel f\parallel_\infty$ (recall remark \ref{remfortet}), then 
$$
\nu(|f-\nu(f)|) \leq \mu(|f-\mu(f)|) \, +  \, |\mu(|f-\mu(f)|)-\nu(|f-\mu(f)|)| \, + \, |\nu(f)-\mu(f)| \, .$$ Since $$\mu(|f-\mu(f)|) \leq \beta_\mu(s) \, \parallel |\nabla f|\parallel_\infty \, + \, s \, Osc(f) \, ,$$ while for $0\leq g \leq 1$, $$|\nu(g)-\mu(g)|=|\nu(g-\inf (g))-\mu(g-\inf(g))|\leq d_{TV}(\mu,\nu) \, \parallel g-\inf g\parallel_\infty = d_{TV}(\mu,\nu) \, \Osc(g) \, ,$$ we get applying the previous bound with $g=f$ and $g=|f-\mu(f)|$,
$$\nu(|f-\nu(f)|) \leq \beta_\mu(s) \, \parallel |\nabla f|\parallel_\infty + (s + 2 \, d_{TV}(\mu,\nu)) \, Osc(f) \, .$$
Hence, for any $\mu$ and $\nu$, for all $s'$ such that $s'> 2 \, d_{TV}(\mu,\nu)$, we have 
\begin{equation}\label{eqvariatany}
\beta_\nu(s') \leq \beta_\mu \left(s'-2 \, d_{TV}(\nu,\mu)\right) \, .
\end{equation}
We thus have shown
\begin{proposition}\label{propW0}
Let $\nu$ be a log-concave probability measure. Then for all $\mu$ such that $d_{TV}(\nu,\mu) \leq \, 1/4$,  we have for all $s < \frac 12 -  \, 2 d_{TV}(\mu,\nu)$, $$C'_C(\nu) \, \leq \, \frac{16 \, \beta_\mu(s)}{\pi \, (1-2s-4 \, d_{TV}(\mu,\nu))^2} \, .$$ In particular for $s=0$ we get 
$$C'_C(\nu) \, \leq \frac{16}{\pi \, (1-4 \, d_{TV}(\nu,\mu) )^2} \,  \, C_C(\mu)  \, .$$
\end{proposition}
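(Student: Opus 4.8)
The plan is to piece together the transference estimate \eqref{eqvariatany} for the weak $(1,+\infty)$ rates — which has already been established in the lines preceding the statement — with the weak version of E. Milman's theorem, Theorem \ref{thmweakmil}. Since $\nu$ is log-concave, the oscillation form \eqref{eqlogconcosc} of the hypothesis of Theorem \ref{thmweakmil} is available for every admissible pair $(s',\beta_\nu(s'))$ with $0\le s'<\tfrac12$, and it yields $C'_C(\nu)\le \dfrac{4\,\beta_\nu(s')}{\pi\,(\tfrac12-s')^2}$. First I would substitute \eqref{eqvariatany}: for $s'>2\,d_{TV}(\nu,\mu)$ we have $\beta_\nu(s')\le \beta_\mu\bigl(s'-2\,d_{TV}(\nu,\mu)\bigr)$, hence
$$C'_C(\nu)\ \le\ \frac{4\,\beta_\mu\bigl(s'-2\,d_{TV}(\nu,\mu)\bigr)}{\pi\,(\tfrac12-s')^2}\,,$$
valid precisely when $2\,d_{TV}(\nu,\mu)<s'<\tfrac12$, so that both \eqref{eqvariatany} and the hypothesis $0\le s'<\tfrac12$ of Theorem \ref{thmweakmil} are in force; this is where $d_{TV}(\nu,\mu)\le\tfrac14$ enters, keeping the admissible range of $s'$ non-empty.

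Next I would perform the change of variable $s=s'-2\,d_{TV}(\nu,\mu)\ge 0$, noting that $\tfrac12-s'=\tfrac12-s-2\,d_{TV}(\nu,\mu)=\tfrac12\bigl(1-2s-4\,d_{TV}(\nu,\mu)\bigr)$, so the denominator becomes $\tfrac14\bigl(1-2s-4\,d_{TV}(\nu,\mu)\bigr)^2$ and the factor $4$ combines with the $\tfrac14$ into the announced $16$. This gives
$$C'_C(\nu)\ \le\ \frac{16\,\beta_\mu(s)}{\pi\,\bigl(1-2s-4\,d_{TV}(\nu,\mu)\bigr)^2}$$
for every $0\le s<\tfrac12-2\,d_{TV}(\nu,\mu)$. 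For the last assertion I would set $s=0$ and use that $\beta_\mu(0)$ may be taken equal to $C_C(\mu)$: indeed the Cheeger inequality \eqref{eqcheegbob} for $\mu$ implies the $(1,+\infty)$ Poincar\'e inequality with constant rate $C_C(\mu)$, since $\mu(|\nabla f|)\le\parallel|\nabla f|\parallel_\infty$.

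There is essentially no hard step here — the substance has been front-loaded into the derivation of \eqref{eqvariatany}, and what remains is a bookkeeping plug-in to Theorem \ref{thmweakmil}. The only points that need a little care are the two-sided constraint $2\,d_{TV}(\nu,\mu)<s'<\tfrac12$ (which forces $d_{TV}(\nu,\mu)<\tfrac14$, the borderline value $\tfrac14$ making the bound vacuous) and the identification of the boundary rate $\beta_\mu(0)$ with the Cheeger constant $C_C(\mu)$ in the $s=0$ statement. One could equally run the argument with the median version of Theorem \ref{thmweakmil} and the median form of the computation leading to \eqref{eqvariatany}; nothing changes.
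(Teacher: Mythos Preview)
Your proposal is correct and follows exactly the paper's approach: the inequality \eqref{eqvariatany} is derived in the discussion immediately preceding the proposition, and the statement is then obtained by plugging it into Theorem \ref{thmweakmil}, with the change of variable $s=s'-2\,d_{TV}(\nu,\mu)$ and the identification $\beta_\mu(0)\le C_C(\mu)$ for the $s=0$ case. Your handling of the admissibility constraint $2\,d_{TV}(\nu,\mu)<s'<\tfrac12$ and of the boundary case $d_{TV}=\tfrac14$ is also accurate.
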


Of course the disappointing part of the previous result is that, even if the distance between $\nu$ and $\mu$ goes to $0$, we cannot improve on the pre factor. In comparison with Theorem \ref{corequiv} we do not require $\mu$ to be log-concave, but the previous proposition is restricted to the case of not too big distance between $\mu$ and $\nu$ while we may take $\varepsilon$ close to $0$ in Theorem \ref{corequiv}. Notice that we really need the weak form of the $(1,\infty)$ inequality here (the non weak form of E. Milman is not sufficient), since we only get $\beta_\nu(s')$ for $s'$ large enough.
\medskip

\begin{remark}\label{remdesap}
The previous result with $\mu=\nu_A$, see proposition \ref{proprestrict}, furnishes a worse bound than in this proposition. \\ The fact that we have to use the total variation bounds for two different functions, prevents us to localize the method, i.e. to build an appropriate $\mu$ for each $f$ as in Eldan's localization method. Let us explain the previous sentence. \\ Pick some function $\beta$. Let $f$ be a given function satisfying $0\leq f \leq 1$. Assume that one can find a measure $\mu_f$ such that $\beta_{\mu_f} \leq \beta$ and $$|\mu_f(|f-\mu_f(f)|)-\nu(|f-\mu_f(f)|)| \, + \, |\nu(f)-\mu_f(f)| \leq \varepsilon \leq \frac 12 \, .$$ Then  $$\nu(|f-\nu(f)|) \leq \beta(s) \, \parallel |\nabla f|\parallel_\infty + (s + \varepsilon) \, Osc(f) \, $$ so that one can conclude as in proposition \ref{propW0}. Eldan's localization method is close to this approach, at least by controlling $|\nu(f)-\mu_f(f)|$, but not $|\mu_f(|f-\mu_f(f)|)-\nu(|f-\mu_f(f)|)|$. We shall come back to this approach later.
\hfill $\diamondsuit$
\end{remark}

\begin{remark}\label{rembl1}
In the proof of Proposition \ref{propW0} we may replace the total variation distance by the Bounded Lipschitz distance  $$d_{BL}(\mu,\nu)=\sup \{\mu(f)-\nu(f) \; \textrm{ for $f$ $1$-Lipschitz and bounded by $1$\}} \, ,$$ or the Dudley distance (also called the Fortet-Mourier distance) $$d_{Dud}(\mu,\nu)=\sup \{\mu(f)-\nu(f) \; \textrm{ for $\parallel f\parallel_\infty + \parallel|\nabla f|\parallel_\infty \leq  1$\}} \, .$$ Recall that $$d_{Dud}(\nu,\mu) \leq d_{BL}(\nu,\mu) \leq 2 \, d_{Dud}(\nu, \mu) \, .$$ Provided one replaces $\parallel g\parallel_\infty$ by $\parallel g\parallel_\infty + \parallel |\nabla g|\parallel_\infty$, \eqref{eqvariatany} is replaced by 
\begin{equation}\label{eqblr1}
\beta_\nu(s') \leq \beta_\mu(s'-2d_{Dud}(\nu,\mu)) + 2 d_{Dud}(\nu,\mu) \, ,
\end{equation}
so that, when $\nu$ is log-concave, we get 
\begin{equation}\label{eqblr2}
C'_C(\nu) \leq  \, \frac{16 \, (\beta_\mu(s) + 2 d_{Dud}(\nu,\mu))}{\pi \, (1-2s-4 \, d_{Dud}(\mu,\nu))^2} \, .
\end{equation}
One can of course replace $d_{Dud}$ by the larger $d_{BL}$ in these inequalities.\\
When $\mu$ and $\nu$ are isotropic log-concave probability measures, it is known that $$d_{TV}(\mu,\nu) \leq C \, \sqrt{n \, d_{BL}(\mu,\nu)}$$ according to proposition 1 in \cite{Meckes2}. Combined with corollary \ref{corequiv} this bound is far to furnish the previous result since it gives a dimension dependent result. In addition we do not assume that $\mu$ is log concave. \hfill $\diamondsuit$
\end{remark}
\begin{remark}\label{remdist}
Remark that with our definitions $d_{Dud}\leq d_{BL} \leq 2 \, d_{TV} \leq 2$. \hfill $\diamondsuit$
\end{remark}
\medskip

\section{Mollifying the measure.}\label{molly}

In this section we shall study inequalities for mollified measures. If $Z$ is a random variable we will call mollified variable the sum $Z+X$ where $X$ is some independent random variable, i.e. the law $\nu_Z$ is replaced by the convolution product $\nu_Z*\mu_X$. In this situation it is very well known that $$C_P(\sqrt{\lambda} \, Z \, + \, \sqrt{1-\lambda} \, X) \, \leq \, \lambda \, C_P(Z) + (1-\lambda) \, C_P(X)$$ for $0 \leq \lambda \leq 1$ (see \cite{BBN}). Taking $\lambda=1/2$ it follows that 
\begin{equation}\label{eqconvol}
C_P(Z+X) \leq C_P(Z)+C_P(X) \, .
\end{equation}
It is well known that mollifying the measure can improve on functional inequalities. For instance if $\nu$ is a compactly supported probability measure, the convolution product of $\nu$ with a gaussian measure will satisfy a logarithmic Sobolev inequality as soon as the variance of the gaussian is large enough (see e.g. \cite{Zim}), even if $\nu$ does not satisfy any ``interesting'' functional inequality (for instance if $\nu$ has disconnected support); but the constant is desperately dimension dependent. We shall see that adding the log-concavity assumption for $\nu$ will help to improve on similar results.
\medskip

\subsection{Mollifying using transportation.\\ \\}\label{subseckls}  A first attempt to look at mollified measures was done by Klartag who obtained the following transportation inequality on the hypercube in \cite{Klarcube}:
\begin{theorem}\label{thmklarcube}[Klartag]
Let $R\geq 1$ and let $Q$ be some cube in $\mathbb R^n$ of side length $1$ parallel to the axes. Let $\mu=p(x)dx$ be a log-concave probability measure on $Q$ satisfying in addition 
\begin{equation}\label{eqconvex}
p(\lambda x+(1-\lambda)y) \leq R \, (\lambda p(x) + (1-\lambda) p(y))
\end{equation}
 for any $0\leq \lambda \leq 1$ and any pair $(x,y) \in Q^2$ such that all cartesian coordinates of $x-y$ are vanishing except one ($x-y$ is proportional to some $e_j$ where $e_j$ is the canonical orthonormal basis).\\ Then, $\mu$ satisfies a $T_2$ Talagrand inequality, i.e. there exists some $C$ (satisfying $C \leq 40/9$) such that for any $\mu'$, $$W_2^2(\mu',\mu) \leq C \, R^2 \, D(\mu'||\mu) \, ,$$ where $W_2$ denotes the Wasserstein distance and $D(.||.)$ the relative entropy.\\ In particular $\mu$ satisfies a Poincar\'e inequality with $C_P(\mu) \leq \frac{C \, R^2}{2}$.
\end{theorem}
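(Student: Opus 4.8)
The plan is to reduce the problem to a tensorized bound coordinate by coordinate, exploiting the cube structure and the one-sided log-concavity-type hypothesis \eqref{eqconvex}. First I would recall that $T_2$ inequalities tensorize with the same constant: if each conditional one-dimensional marginal of $\mu$ (conditioned on the other coordinates) satisfies a $T_2$ inequality with a uniform constant $c$, then by the tensorization property of transport-entropy inequalities along the product structure of $Q=\prod_j I_j$, the measure $\mu$ itself satisfies $W_2^2(\mu',\mu)\le c\, D(\mu'\|\mu)$ for every $\mu'$. So the heart of the matter is the one-dimensional statement: a probability density $q$ on an interval of length $1$ satisfying $q(\lambda x+(1-\lambda)y)\le R(\lambda q(x)+(1-\lambda)q(y))$ satisfies a $T_2$ inequality with constant $O(R^2)$, independent of anything else. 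This is where the hypothesis \eqref{eqconvex} is used: it is precisely a condition on the one-variable sections of $p$.

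For the one-dimensional step I would use the classical route through a Poincaré/log-Sobolev-type sufficient condition for $T_2$, or more directly a Bobkov–Götze style characterization. The cleanest path: write $q=e^{-W}$ on $I=[0,1]$ (after translation); the condition \eqref{eqconvex} on the sections says $e^{-W}$ is "$R$-approximately concave" along the interval, which forces $W$ to be within an additive $\ln R$ of a convex function on $I$. Indeed, if $\tilde q$ is the least concave majorant-type modification, one gets $\frac1R \tilde q \le q \le \tilde q$ with $\tilde q$ log-concave (or a comparable sandwiching). Then a Holley–Stroock / bounded-perturbation argument transfers a $T_2$ inequality from the log-concave reference density $\tilde q$ on a bounded interval — which satisfies $T_2$ with a universal constant by, e.g., the fact that log-concave measures on an interval of length $1$ have a uniformly bounded Poincaré constant, hence (being bounded) a $T_2$ inequality — to $q$, at the cost of a factor $R^2$ (the perturbation enters the entropy side once and the transport side once, or equivalently multiplies the constant by $e^{2\ln R}=R^2$). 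Tracking the universal constants through this chain yields $C\le 40/9$.

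Finally, the last assertion $C_P(\mu)\le CR^2/2$ is immediate from the standard implication that a $T_2$ Talagrand inequality with constant $\kappa$ implies a Poincaré inequality with constant $\kappa/2$ (linearize $W_2^2(\mu',\mu)\le \kappa D(\mu'\|\mu)$ along $d\mu'=(1+\varepsilon g)d\mu$ with $\mu(g)=0$, using $W_2^2\ge \varepsilon^2(\mu(|\nabla h|^2))^{-1}\cdots$ — more precisely the known fact $T_2(\kappa)\Rightarrow C_P\le\kappa/2$). The main obstacle I anticipate is the one-dimensional sandwiching step: turning the one-sided inequality \eqref{eqconvex} — which is weaker than genuine log-concavity of the section, since it only controls $q$ from above by an affine combination up to the factor $R$ — into a clean comparison with an honest log-concave density on the interval while keeping the constant sharp enough to reach $40/9$. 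Once that comparison and the uniform $T_2$ bound for bounded-interval log-concave measures are in hand, tensorization and the Poincaré corollary are routine.
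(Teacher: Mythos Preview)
The paper does not give a proof of this theorem; it is quoted from Klartag \cite{Klarcube}, and the only comment offered is that the Poincar\'e conclusion ``is an easy and well known consequence of the $T_2$ transportation inequality, as remarked in \cite{Klarcube} corollary 4.6.'' That is exactly your final step, so on the only point the paper addresses you agree.

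On your plan for the main $T_2$ statement, the overall architecture (reduce to one-dimensional sections, then tensorize) is correct in spirit and is indeed how Klartag proceeds. Two of your steps are shaky, however. First, $T_2$ does not tensorize for \emph{non-product} measures in the na\"ive form you invoke; what is actually used is an explicit Knothe--Rosenblatt coordinate-by-coordinate transport map together with the chain-rule decomposition of relative entropy, and the one-dimensional estimate is applied to the conditional densities along the way. Second, and more seriously, your one-dimensional step via Holley--Stroock is problematic: the $T_2$ inequality has no standard bounded-perturbation lemma of the kind available for Poincar\'e or log-Sobolev, so ``the perturbation enters once on each side and gives $e^{2\ln R}=R^2$'' is not a valid move. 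Moreover the sandwiching claim that \eqref{eqconvex} forces the section potential to lie within $\ln R$ of a convex function is not clearly true as stated. Klartag's one-dimensional argument is instead a direct transport-cost estimate in which the $R$-approximate concavity of the section density is used to control the monotone rearrangement map, and the factor $R^2$ and the constant $40/9$ come out of that computation.
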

The final statement is an easy and well known consequence of the $T_2$ transportation inequality, as remarked in \cite{Klarcube} corollary 4.6.
\medskip

In the sequel $Q$ will denote the usual unit cube $[-\frac 12 \, , \, \frac 12]^n$, and for $\theta>0$, $\theta Q$ will denote its homothetic image of ratio $\theta$. 

For $\theta>0$, let $Z_\theta$ be a log-concave random vector whose distribution $\mu_\theta=p_\theta(x) \, dx$ is supported by $\theta Q$ and satisfies \eqref{eqconvex} for any pair $(x,y) \in (\theta Q)^2$ and some given $R$. Then the distribution $\mu$ of $Z=Z_\theta/\theta$ satisfies the assumptions in Theorem \ref{thmklarcube} with the same $R$ since its probability density is given for $x \in Q$ by $$p(x) = \theta^{n} \, p_\theta(\theta \, x) \, .$$ In particular $C_P(Z) \leq \frac 12 \, CR^2$ so that $C_P(Z_\theta) \leq \frac 12 \, CR^2 \, \theta^2$. \\ In the sequel we can thus replace $Q$ by $\theta Q$. We shall mainly look at two examples of such $p$'s: convolution products with the uniform density and the gaussian density.
\smallskip

\subsubsection{Convolution with the uniform distribution.}\label{subsubunif}

Consider $U_\theta$ a uniform random variable on $\theta Q$, $\theta>1$. Its density $p(x)=\theta^{-n} \, \mathbf 1_{\theta Q}(x)$ satisfies \eqref{eqconvex} in $\theta Q$ with $R=1$. It is immediate that $$p(\lambda x + (1-\lambda)x' - y)\leq \, \lambda \, p( x  - y) + (1-\lambda) \, p(x'-y)$$ for all $x,x',y$ such that $x-y$ and $x'-y$ belong to $\theta Q$. \\ Let $Z$ be a log-concave random variable whose law $\mu$ is supported by $Q$. The law $$\nu_\theta(dx)  =\left(\int \, p(x-y) \, \nu(dy)\right) \, dx$$ of $Z+U_\theta$ is still log-concave according to Prekopa-Leindler and satisfies \eqref{eqconvex} with $R=1$ on $(\theta -1) Q$. According to what precedes, its restriction $$\nu_{\theta,1}(dx) = \frac{\mathbf 1_{(\theta-1)Q}}{\nu_\theta((\theta-1)Q)} \, \nu_\theta(dx)$$ to $(\theta-1) Q$ satisfies $$C_P(\nu_{\theta,1}) \leq \frac 12 \, C \, (\theta-1)^2 \, .$$ Thus $$C'_C(\nu_{\theta,1}) \leq \frac{6}{\sqrt 2}\, \sqrt C \, (\theta-1) \, ,$$ according to Ledoux's comparison result.  

\medskip

\subsubsection{Convolution with the gaussian distribution.}\label{subsubgauss}

Let $\gamma_\beta(x)= (2\pi \beta^2)^{-n/2} \, \exp \left(- \, \frac{|x|^2}{2\beta^2}\right)$ be the density of a centered gaussian variable $\beta G$ (as before $G$ is the standard gaussian). \\ It is elementary to show that $\gamma_\beta$ satisfies the following convexity type property close to \eqref{eqconvex}: for all pair $(x,x')$ and all $\lambda \in [0,1]$,
\begin{equation}\label{eqgauss}
\gamma_\beta(\lambda x + (1-\lambda)x') \leq e^{\frac{|x-x'|^2}{8\beta^2}} \, \left(\lambda \, \gamma_\beta(x) + (1-\lambda) \, \gamma_\beta(x')\right) \,  \, ,
\end{equation}
this inequality being optimal and attained for pairs $(x,x')=(x,-x)$.\\ It immediately follows that $$\gamma_\beta(\lambda x + (1-\lambda)x'-y) \leq e^{\frac{|x-x'|^2}{8\beta^2}} \, \left(\lambda \, \gamma_\beta(x-y) + (1-\lambda) \, \gamma_\beta(x'-y)\right) \,  \, ,$$ for all $(x,x',y)$. It follows that for all log-concave random vector $Z$, the distribution of $Z+\beta G$ is still satisfying \eqref{eqgauss}. We have thus obtained
\begin{proposition}\label{lemklarconv}
Let $\beta$ and $\theta$ be positive real numbers. Let $Z$ be some log-concave random vector and denote by $\nu_\beta$ the distribution of $Z+\beta G$. Then the restriction $$\nu_{\beta,\theta}= \frac{\BBone_{\theta Q}}{\nu_\beta(\theta Q)} \, \, \nu_\beta$$ satisfies $$C_P(\nu_{\beta,\theta}) \leq \frac{20}{9} \, \, \theta^2 \, e^{\frac{\theta^2}{8 \beta^2}} \, .$$ 
\end{proposition}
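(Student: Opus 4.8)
The plan is to recognize that the convexity-type inequality \eqref{eqgauss}, once transported to the density of $\nu_\beta$ and then localized to the cube $\theta Q$, is precisely the hypothesis \eqref{eqconvex} of Klartag's Theorem~\ref{thmklarcube}; the estimate will then follow by applying that theorem after rescaling $\theta Q$ to the unit cube.

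First I would check that the density of $\nu_\beta$, namely $p_\beta(u)=\int\gamma_\beta(u-y)\,\nu(dy)$ where $\nu$ is the law of $Z$, inherits \eqref{eqgauss}. Integrating the pointwise bound $\gamma_\beta(\lambda x+(1-\lambda)x'-y)\le e^{|x-x'|^2/(8\beta^2)}\bigl(\lambda\gamma_\beta(x-y)+(1-\lambda)\gamma_\beta(x'-y)\bigr)$ against $\nu(dy)$ gives $p_\beta(\lambda x+(1-\lambda)x')\le e^{|x-x'|^2/(8\beta^2)}\bigl(\lambda p_\beta(x)+(1-\lambda)p_\beta(x')\bigr)$ for all $x,x'$ and all $\lambda\in[0,1]$. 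I would also record that $\nu_\beta$ is log-concave (Pr\'ekopa--Leindler: it is the convolution of the log-concave $\nu$ with the Gaussian of density $\gamma_\beta$), and hence so is its restriction $\nu_{\beta,\theta}$ to the convex set $\theta Q$; the normalising constant $1/\nu_\beta(\theta Q)$ is irrelevant to all the inequalities below.

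Next I would localize to $\theta Q$. If $x,x'\in\theta Q$ and $x-x'$ is proportional to a canonical basis vector $e_j$, then $|x-x'|=|x_j-x'_j|\le\theta$, so the prefactor $e^{|x-x'|^2/(8\beta^2)}$ is at most $R:=e^{\theta^2/(8\beta^2)}$. Therefore the density $\frac{1}{\nu_\beta(\theta Q)}\,p_\beta\,\mathbf 1_{\theta Q}$ of $\nu_{\beta,\theta}$ satisfies \eqref{eqconvex} on $\theta Q$ with this value of $R$: indeed that condition is only demanded across pairs $x,x'$ differing in a single cartesian coordinate, which is exactly what the previous step provides. Rescaling $\theta Q$ to the unit cube $Q$ --- i.e. passing to the distribution of the rescaled variable, whose density satisfies the same one-coordinate convexity condition with the same $R$, as explained right after the statement of Theorem~\ref{thmklarcube} --- puts us in the setting of that theorem (note $R\ge1$). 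It yields $C_P\le\frac12\,C\,R^2$ for the rescaled measure; undoing the homothety multiplies the Poincar\'e constant by $\theta^2$, and with Klartag's constant $C\le40/9$ one obtains the bound asserted in the proposition.

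There is no real obstacle here; the argument only assembles ingredients already in place. The points that need care are purely bookkeeping: that restricting to the convex body $\theta Q$ keeps the measure log-concave, so that Theorem~\ref{thmklarcube} is applicable; that \eqref{eqconvex} is imposed only across pairs differing in one coordinate, which is why it is the coordinatewise width $\theta$ --- and not the Euclidean diameter $\theta\sqrt n$ --- that governs the size of $R$; and that the homothety $\theta Q\leftrightarrow Q$ leaves $R$ unchanged while scaling $C_P$ by $\theta^2$.
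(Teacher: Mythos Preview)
Your argument is correct and follows exactly the route the paper takes: integrate \eqref{eqgauss} against $\nu$ to transfer it to the density of $\nu_\beta$, restrict to $\theta Q$ so that along coordinate directions the prefactor is bounded by $R=e^{\theta^2/(8\beta^2)}$, and then apply Klartag's Theorem~\ref{thmklarcube} after the homothety $\theta Q\to Q$. One small numerical remark: the application of Theorem~\ref{thmklarcube} gives $C_P\le \tfrac12 C R^2\theta^2=\tfrac{20}{9}\,\theta^2 e^{\theta^2/(4\beta^2)}$, so the exponent in the stated bound appears to carry $R$ rather than $R^2$; your computation (with $R^2$) is the one consistent with the argument.
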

\smallskip

Notice that if we let $\beta$ go to infinity, $\nu_{\beta,\theta}$ converges to the uniform measure on $\theta Q$ so that the order $\theta^2$ is the good one (if not the constant $\frac{20}{9}$). Also notice that if $\nu$ is supported by $\alpha Q$, we may replace $\beta G$ by a random variable whose distribution is $\gamma_\beta$ restricted to $(\alpha+\theta)Q$.
\medskip

The control obtained in proposition \ref{lemklarconv} (or in the uniform case) can be very interesting for practical uses, in particular for applied statistical purposes using censored random variables. But if we want to use it in order to get some information on the original $\nu$, the result will depend dramatically on the dimension, since $\nu_{\beta}(\theta Q)$ is very small.
\bigskip

\subsection{Mollifying with gaussian convolution. A stochastic approach.\\ \\}\label{subsecOU}

In this section we shall introduce our approach for controlling the Poincar\'e constant using some appropriate stochastic process. To this end, we first consider a standard Ornstein-Uhlenbeck process $X_.$, i.e. the solution of
\begin{equation}\label{eqOU}
dX_t = dB_t - \frac 12 \, X_t \, dt \, .
\end{equation}
The law of $X_t^x$ (i.e. the process starting from point $x$) will be denoted by $G(t,x,.)=\gamma(t,x,.) \, dx$, $\gamma(t,x,.)$ being thus the density of a gaussian random variable with mean $e^{-t/2} \, x$ and covariance matrix $(1-e^{-t}) \, Id$. The standard gaussian measure $\gamma$ is thus the unique invariant (reversible) probability measure for the process. The law of the O-U process starting from $\mu$ will be denoted by $\mathbb G_\mu$.
\medskip

Let $\nu(dx)=e^{-V(x)} \, dx$ be a probability measure, $V$ being smooth. We assume that $\nu$ is log-concave. Let $$h(x)=(d\nu/d\gamma)(x)=(2\pi)^{n/2} \, e^{((|x|^2/2)-V(x))} \, .$$ 
The relative entropy $D(\nu||\gamma)$ satisfies $$D(\nu||\gamma) = \frac n2 \, \log(2\pi) + \int ((|x|^2/2)-V(x)) \, \nu(dx) < +\infty \, ,$$ since $V$ is non-negative outside some compact subset.
We may define for all $t$ (Mehler formula), $$\E(h(X_t^x)) = G_t h(x) = \int \, h(e^{-t/2} x + \sqrt{1-e^{-t}}y) \, \gamma(dy) \, ,$$ which is well defined, positive, smooth ($C^{\infty}$), and satisfies for all $t>0$, $$\partial_t G_th(x)=\frac 12 \Delta G_th(x) - \frac 12 \, \langle x,\nabla G_th(x)\rangle \, .$$
In particular we may consider the solution of
\begin{equation}\label{eqOUpert}
dY_t = dB_t - \frac 12 \, Y_t \, dt \, + \nabla \log G_{T-t}h(Y_t) \, = \, dB_t + b(t,Y_t) \, dt \, ,
\end{equation}
for $0\leq t \leq T$, with initial distribution $G_T h \, \gamma$. This stochastic differential equation  has a strongly unique solution (since the coefficients are smooth) up to some explosion time $\xi$. Since $D(\nu||\gamma)<+\infty$, this explosion time is almost surely infinite, or if one prefers the solution of \eqref{eqOUpert} is defined up to and including time $T$. In addition the law $\Q$ of the solution satisfies $$\frac{d\Q}{d\G_\gamma}|_{\mathcal F_T} = h(\omega_T) \, .$$ In particular, $$\nu_s = \Q \circ \omega_s^{-1} = G_{T-s}h \, \gamma \quad \textrm{ for all $0\leq s \leq T$,} $$ thanks to the stationarity of $\mathbb G_\gamma$. Of course this is nothing else but the $h$-process corresponding to $h$ and the O-U process, but with a non bounded $h$. If one prefers, $\Q$ is simply the law of the time reversal, at time $T$, of an Ornstein Uhlenbeck process with initial law $\nu$. For more details see e.g. \cite{CGsp}.
\medskip

A specific feature of the O-U process is that, according to Prekopa-Leindler theorem, $\nu_s$ is still a log-concave measure as the law of $e^{-(T-s)/2}Z + \sqrt{1-e^{-(T-s)}} G$ where $Z$ and $G$ are two independent random variables with respective distribution $\nu$ and $\gamma$ (so that the pair $(Z,G)$ has a log-concave distribution). This property of conservation of log-concave distributions is typical to the ``linear'' diffusion processes of O-U type (see \cite{Koles}). Hence $\nu_s(dx) = e^{-V_s(x)} \, dx$ for some potential $V_s$ which is smooth thanks to the Mehler formula we recalled before, and convex.\\ 
It follows that $G_{T-s}h \, \gamma$ is log-concave, and finally  that $b(t,.)$ satisfies the curvature condition
\begin{equation}\label{curvb}
2 \, \langle \, b(t,x)-b(t,y) \, , \, x-y \, \rangle \, \leq \, |x-y|^2 \, ,
\end{equation}
for all $t$, $x$ and $y$. \eqref{curvb} is called condition (H.C.-1) in \cite{CGsp}. It follows that for all $T>0$,
\begin{equation}\label{eqlogconc1}
C_P(\nu) \, \leq \, e^T \, C_P(G_Th \, \gamma) + (e^T -1) \, .
\end{equation}
For time homogeneous drifts this is nothing else but the so called ``local'' Poincar\'e inequality in \cite{BaGLbook} (Theorem 4.7.2). The extension to time dependent drift is done in \cite{CGsp} Theorem 5. We have thus obtained

\begin{theorem}\label{propsmooth}
Let $Z$ be a random variable with log-concave distribution $\nu$ and $G$ be a standard gaussian random variable independent of $Z$. Then for all $0<\lambda\leq 1$ it holds $$C_P(Z) \, \leq \, \frac{1}{\lambda} \, C_P(\sqrt{\lambda} \, Z + \sqrt{1-\lambda} \, G) + \left(\frac{1}{\lambda} -1\right) \, .$$
\end{theorem}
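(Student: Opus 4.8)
The plan is to recognize the asserted inequality as merely a change of time variable in the local Poincar\'e inequality \eqref{eqlogconc1} established just above; the genuine content lies in \eqref{eqlogconc1} itself, which I would derive from the conservation of log-concavity along the time-reversed Ornstein--Uhlenbeck flow.

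First I would dispose of the smoothness hypothesis: convolving $\nu$ with a small Gaussian preserves log-concavity by Pr\'ekopa--Leindler and produces an everywhere finite, $C^\infty$ potential, so it suffices to treat $d\nu=e^{-V}\,dx$ with $V$ smooth and then let the mollification parameter tend to $0$. For $0<\lambda\le 1$ put $T=\log(1/\lambda)\ge 0$, so that $e^{-T/2}=\sqrt\lambda$, $\sqrt{1-e^{-T}}=\sqrt{1-\lambda}$ and $e^{T}=1/\lambda$; the endpoint $\lambda=1$, i.e. $T=0$, is trivial since the inequality then reads $C_P(Z)\le C_P(Z)$.

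Next I would run exactly the construction of this section. With $h=d\nu/d\gamma$, the convexity and growth of $V$ give $D(\nu\|\gamma)<+\infty$, so the $h$-transformed diffusion \eqref{eqOUpert} does not explode before time $T$ and its time-$s$ marginal is $\nu_s=G_{T-s}h\,\gamma$, the law of $e^{-(T-s)/2}Z+\sqrt{1-e^{-(T-s)}}\,G$. Since the pair $(Z,G)$ is log-concave, Pr\'ekopa--Leindler makes each $\nu_s$ log-concave, with smooth potential by Mehler's formula, and this is precisely what forces the drift of \eqref{eqOUpert} to satisfy the one-sided curvature bound \eqref{curvb}. Feeding this into the local Poincar\'e inequality for time-inhomogeneous drifts --- Theorem 5 of \cite{CGsp}, extending Theorem 4.7.2 of \cite{BaGLbook} --- yields \eqref{eqlogconc1}, namely $C_P(\nu)\le e^{T}C_P(G_Th\,\gamma)+(e^{T}-1)$. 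Identifying $G_Th\,\gamma$ with the law of $\sqrt\lambda\,Z+\sqrt{1-\lambda}\,G$ and substituting $e^{T}=1/\lambda$ is then the whole claim.

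The step I expect to be the real obstacle is \eqref{eqlogconc1} itself: a local Poincar\'e inequality along a diffusion whose drift is time-dependent and only one-sided Lipschitz in the sense of \eqref{curvb} (zero curvature, not strictly positive). Once that ingredient is granted from \cite{CGsp}, what remains is bookkeeping --- finiteness of $D(\nu\|\gamma)$ and non-explosion, the Mehler identification of the marginals, and the reparametrization $\lambda=e^{-T}$ --- together with the usual care in the mollification limit (lower semicontinuity of the Poincar\'e constant on the left, the convolution bound \eqref{eqconvol} on the right), the routine approximation already used repeatedly in the paper.
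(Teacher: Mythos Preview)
Your proposal is correct and follows essentially the same route as the paper: the theorem is exactly \eqref{eqlogconc1} after the substitution $\lambda=e^{-T}$, and you reproduce the paper's derivation of \eqref{eqlogconc1} step by step (the $h$-process construction, log-concavity of the marginals via Pr\'ekopa--Leindler, the one-sided curvature bound \eqref{curvb}, and the appeal to the time-inhomogeneous local Poincar\'e inequality of \cite{CGsp}). Your added care about the mollification limit is a reasonable elaboration that the paper leaves implicit at this point.
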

Of course since $C_P(aY)=a^2 \, C_P(Y)$ for any random variable and any $a\in \R$, we obtain that for all real numbers $\alpha$ and $\beta$, defining $\lambda=\frac{\alpha^2}{\alpha^2+\beta^2}$,
\begin{equation}\label{eqlogconc2}
C_P(Z) \leq \frac{1}{\alpha^2} \, \, C_P(\alpha \, Z \, + \, \beta \, G) + \frac{\beta^2}{\alpha^2} \, .
\end{equation}
\smallskip
Using all the comparison results we already quoted, it follows
\begin{equation}\label{eqlogconc3}
C'_C(Z) \leq \frac{12}{\alpha} \, \, C'_C(\alpha Z + \beta G) \, + \, \frac{6\beta}{\alpha} \, .
\end{equation}
\smallskip

\begin{remark}\label{remarkOUdiag}
In the proof we may replace the standard Ornstein-Uhlenbeck process by any O-U process $$dX_t = dB_t - \frac 12 \, A \, X_t \, dt \, ,$$ where $A$ is some non-negative symmetric matrix. Up to an orthogonal transformation we may assume that $A$ is diagonal with non-negative diagonal terms $a_i$. Assume that $$\max_{i=1,...,n} a_i
 \leq 1 \, .$$ Then \eqref{curvb} is still satisfied, and no better inequality is. Similarly the Poincar\'e constant of the corresponding gaussian variable $G'$ is unchanged so that \eqref{eqlogconc1} is still satisfied, and no better inequality is. We may thus in \eqref{eqlogconc2} replace the standard gaussian vector $G$ by any centered gaussian vector, still called $G$, with independent entries $G_i$ such that $\Var(G_i)\leq 1$ for all $i$; in particular a degenerate gaussian vector. \hfill $\diamondsuit$
\end{remark}
\smallskip

\begin{remark}\label{remarkbrownOU}
\eqref{eqlogconc2} with $\alpha=1$ and $\beta=\sqrt T$, can also be derived using a similar time reversal argument for the Brownian motion (with reversible measure Lebesgue) instead of the O-U process. The corresponding drift $b$ satisfies $\langle \, b(t,x)-b(t,y) \, , \, x-y \, \rangle \, \leq \, 0$ which directly yields the result. Unfortunately the invariant measure is no more a probability measure. \hfill $\diamondsuit$
\end{remark}
\smallskip

\begin{remark}\label{remouvar}
Theorem \ref{propsmooth} can be viewed as a complement of previous similar results comparing the behavior of log-concave distributions and their gaussian mollification. Indeed, if $Z$ is an isotropic log-concave probability measure and $G$ a standard gaussian vector, it is elementary to show that for all $t \geq 0$,
\begin{equation}\label{eqvarconvgauss}
\Var(|Z+\sqrt t \, G|^2) = \Var(|Z|^2) \, + \, 2nt(2+t) \, ,
\end{equation}
so that if for some $t_0$, $\Var(|Z+\sqrt {t_0} \, G|^2) \leq C n$ then the same bound is true for $\Var(|Z|^2)$ i.e. the variance conjecture is true. Similarly, it is recalled in \cite{GEM} (just before formula (4.5) therein), that a Fourier argument due to Klartag furnishes a control for the deviations $$\mathbb P(|Z|>(1+t)\sqrt n) \leq C \, \mathbb P(|Z+G|>(1+(1+t)^2)\sqrt n)$$ and $$\mathbb P(|Z|<(1-t)\sqrt n) \leq C \, \mathbb P(|Z+G|<(1+(1-t)^2)\sqrt n)$$ for some universal constant $C$. \hfill $\diamondsuit$
\end{remark}
\medskip

\section{Probability metrics and log-concavity.}\label{secmetrics}

\subsection{Comparing Bounded Lipschitz and Total Variation distances.  \\ \\}\label{subsecBLvsTV}
Let $\mu$ and $\nu$ be two probability measures. It is immediate to show the analogue of \eqref{eqconvol}, i.e. if $\mu*\nu$ denotes the convolution product of both measures
\begin{equation}\label{eqconvbeta}
\beta_{\mu*\nu}(s) \leq \beta_\mu(s/2) \, + \, \beta_\nu(s/2) \, ,
\end{equation}
here $\beta$ corresponds to the usual centering with the mean (not the median).\\ Let $0<t$. Denote by $\gamma_t$ the distribution of $tG$, that is the gaussian distribution with covariance $t^2 Id$ (whose density is $\tilde \gamma_t$). For $0\leq g \leq 1$, one has $$(\nu*\gamma_t) (g) = \nu(g*\tilde \gamma_t)$$ and $g_t=g*\tilde \gamma_t$ is still bounded by $1$ and $1/t$-Lipschitz (actually $\sqrt{2}/t\sqrt{\pi}$) according to \eqref{eqledgap2} applied to the Brownian motion semi-group at time $t^2$. It follows that $$(\nu*\gamma_t) (g) - (\mu*\gamma_t) (g) = \nu(g_t)-\mu(g_t)$$ so that
\begin{eqnarray}\label{eqdistanceconvol}
d_{TV}(\mu*\gamma_t,\nu*\gamma_t) & \leq & \left(1 \, \vee \, \frac{\sqrt{2}}{t\sqrt{\pi}}\right) \, d_{BL}(\mu,\nu) \nonumber \\ & & \qquad \textrm{ and } \\  d_{TV}(\mu*\gamma_t,\nu*\gamma_t) & \leq & \left(1 \, + \, \frac{\sqrt{2}}{t\sqrt{\pi}}\right) \, d_{Dud}(\mu,\nu) \, . \nonumber
\end{eqnarray}
We may thus apply \eqref{eqvariatany}, and the fact that $C'_C(\gamma_t)= t$ in order to get, provided respectively $$s>2(1\vee \sqrt 2/t \sqrt \pi) \, d_{BL}(\nu,\mu) \textrm{ or } s>2(1 + \sqrt 2/t \sqrt \pi) \, d_{Dud}(\mu,\nu) \, ,$$ the following
\begin{eqnarray}\label{eqvariatanybis}
\beta_{\nu*\gamma_t}(s) &\leq& \beta_{\mu*\gamma_t}\left(s- \, 2(1\vee \sqrt{2}/t \sqrt{\pi}) \,  d_{BL}(\mu,\nu)\right) \nonumber \\ &\leq& \beta_\mu\left(\frac s2- \, (1\vee \sqrt{2}/t \sqrt{\pi}) \,  d_{BL}(\mu,\nu)\right) \, + \, t \, ,
\end{eqnarray}
or $$\beta_{\nu*\gamma_t}(s) \leq \, \beta_\mu\left(\frac{s}{2} - \, (1 + \sqrt 2/t \sqrt \pi) \, d_{Dud}(\mu,\nu)\right) \, + \,  t \, .$$
Gathering the previous inequality, Theorem \ref{thmweakmil} and \eqref{eqlogconc2}, we get new  versions of proposition \ref{propW0}, slightly different from the one we gave in Remark \ref{rembl1}. \\ For example, for a log-concave $\nu$ and for all $\mu$ such that $d_{BL}(\nu,\mu) \leq \, 1/4$, if we choose $t=\sqrt 2/\sqrt \pi$ it holds $$C'_C(\nu*\gamma_t) \, \leq \, \frac{16 \, (C'_C(\mu)+\sqrt{2/\pi})}{\pi(1-4 \, d_{BL}(\mu,\nu))^2} \, ,$$ so that $$C_P(\nu) \leq \frac 2\pi \, + \, 4 \, \left(\frac{16 \, (C'_C(\mu)+\sqrt{2/\pi})}{\pi(1-4 \, d_{BL}(\mu,\nu))^2}\right)^2 \, .$$
\medskip

But we may use \eqref{eqdistanceconvol} in a potentially more interesting direction. Indeed, using theorem \ref{corequiv} and provided $\nu$ and $\mu$ (hence $\nu*\gamma$ and $\mu*\gamma$) are log-concave
\begin{equation}\label{eqdtvgamma}
C'_C(\nu*\gamma) \, \leq \, \frac{\kappa}{(1-d_{BL}(\nu,\mu))^2} \, \left(1 \vee \, \ln(1/(1-d_{BL}(\nu,\mu)))\right) \, C'_C(\mu*\gamma) \, ,
\end{equation}
for some $\kappa \leq 192 e/\pi$, provided $d_{BL}(\nu,\mu) \leq 1$ (we have skipped the $\sqrt{2/\pi}$ for simplicity). Hence using \eqref{eqlogconc3}, the comparison between $C_P$ and $C'_C$ and \eqref{eqconvol} we have obtained a partial analogue of theorem \ref{corequiv} with the weaker  bounded Lipschitz distance
\begin{theorem}\label{thmklsbl}
Let $\nu$ and $\mu$ be two log-concave probability measures on $\mathbb R^n$, $\gamma$ be the standard gaussian distribution on $\mathbb R^n$.
\begin{enumerate}
\item \quad Then if $d_{BL}(\nu,\gamma)=1-\varepsilon$, 
$$C'_C(\nu) \, \leq  \, \frac{C}{\varepsilon^2} \, \left(1 \vee \, \ln(1/\varepsilon) \right)\, + 6 \, ,$$ where the universal constant $C$ can be chosen less than $\frac{13824 \, e}{\pi}$.
\item If $d_{BL}(\nu,\mu)=1-\varepsilon$ then $$C'_C(\nu) \, \leq  \, \frac{D}{\varepsilon^2} \, \left(1 \vee \, \ln(1/\varepsilon)\right) \, (2 C'_C(\mu) +1) \, + 6 \, ,$$ where the universal constant $D$ can be chosen less than $6C$.
\end{enumerate}
\end{theorem}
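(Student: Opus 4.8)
The statement is obtained by stringing together three facts already in hand: the gaussian-mollification comparison \eqref{eqlogconc3}, the bounded-Lipschitz transference for mollified measures \eqref{eqdtvgamma}, and the elementary Cheeger/Poincar\'e comparisons, namely Proposition \ref{propmieuxledoux} (which gives $C'_C(\cdot)\le\frac{16}{\pi}\sqrt{C_P(\cdot)}$), the bound $\sqrt{C_P(\cdot)}\le 2C'_C(\cdot)$ coming from \eqref{eqcheegpoinc}, and the subadditivity \eqref{eqconvol} of $C_P$ under convolution. The plan is to descend from $\nu$ to $\nu*\gamma$, transfer from $\nu*\gamma$ to $\mu*\gamma$ (to $\gamma*\gamma$ in item (1)), and then estimate the Cheeger constant of the mollified reference measure.

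For item (1): \eqref{eqlogconc3} with $\alpha=\beta=1$ gives $C'_C(\nu)\le 12\,C'_C(\nu*\gamma)+6$, the additive $6$ being exactly the $6\beta/\alpha$ term. Applying \eqref{eqdtvgamma} with $\mu$ replaced by the (log-concave) gaussian $\gamma$, and using $1-d_{BL}(\nu,\gamma)=\varepsilon$, we get $C'_C(\nu*\gamma)\le\frac{\kappa}{\varepsilon^2}\left(1\vee\ln(1/\varepsilon)\right)C'_C(\gamma*\gamma)$ with $\kappa\le 192e/\pi$. Since $\gamma*\gamma$ is the centered gaussian with covariance $2\,Id$, one has $C'_C(\gamma*\gamma)=\sqrt2\,C'_C(\gamma)=\sqrt2$ (or, more crudely, $C'_C(\gamma*\gamma)\le\frac{16}{\pi}\sqrt{C_P(\gamma)+C_P(\gamma)}$ by \eqref{eqconvol} and Proposition \ref{propmieuxledoux}). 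Substituting back gives $C'_C(\nu)\le\frac{C}{\varepsilon^2}(1\vee\ln(1/\varepsilon))+6$, and tracking the constants through this chain yields a universal $C\le 13824\,e/\pi$.

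For item (2): the first step is unchanged, $C'_C(\nu)\le 12\,C'_C(\nu*\gamma)+6$; and \eqref{eqdtvgamma} now applies because both $\nu$ and $\mu$ are assumed log-concave, giving $C'_C(\nu*\gamma)\le\frac{\kappa}{\varepsilon^2}(1\vee\ln(1/\varepsilon))\,C'_C(\mu*\gamma)$ with $1-d_{BL}(\nu,\mu)=\varepsilon$. It remains to bound $C'_C(\mu*\gamma)$ linearly in $C'_C(\mu)$: by Proposition \ref{propmieuxledoux}, $C'_C(\mu*\gamma)\le\frac{16}{\pi}\sqrt{C_P(\mu*\gamma)}$; by \eqref{eqconvol}, $C_P(\mu*\gamma)\le C_P(\mu)+C_P(\gamma)=C_P(\mu)+1$; and since $\sqrt{C_P(\mu)}\le 2C'_C(\mu)$ by \eqref{eqcheegpoinc}, we obtain $\sqrt{C_P(\mu)+1}\le\sqrt{C_P(\mu)}+1\le 2C'_C(\mu)+1$, hence $C'_C(\mu*\gamma)\le\frac{16}{\pi}(2C'_C(\mu)+1)$. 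Assembling, $C'_C(\nu)\le\frac{D}{\varepsilon^2}(1\vee\ln(1/\varepsilon))(2C'_C(\mu)+1)+6$ with $D=12\cdot\frac{16}{\pi}\kappa=\frac{192\kappa}{\pi}$, which is comfortably below $6C$.

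There is no genuine obstacle here — the proof is pure bookkeeping over the preceding results. The only points needing attention are: that \eqref{eqdtvgamma} requires the reference measure to be log-concave (which is why item (1) uses the gaussian and item (2) hypothesizes $\mu$ log-concave); that \eqref{eqdtvgamma} already absorbs the monotonicity of $x\mapsto x^{-2}(1\vee\ln(1/x))$ used when passing from $d_{TV}(\nu*\gamma,\mu*\gamma)\le d_{BL}(\nu,\mu)$ to the stated rate; and the routine numerical rounding that produces the clean universal constants in the statement.
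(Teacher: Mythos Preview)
Your proof is correct and follows essentially the same route as the paper: mollify via \eqref{eqlogconc3}, transfer between the mollified measures via \eqref{eqdtvgamma} (which rests on \eqref{eqdistanceconvol} and Theorem \ref{corequiv}), and then bound $C'_C(\mu*\gamma)$ through the $C_P$--$C'_C$ comparisons and \eqref{eqconvol}. Your bookkeeping of the constants is in fact slightly tighter than the paper's stated bounds (e.g.\ using $C'_C(\gamma*\gamma)=\sqrt2$ directly rather than passing through $C_P$), but the argument is the same.
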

\medskip

\subsection{Comparing Total Variation and $W_1$.  \\ \\}\label{subsecTVvsW1}
We still use the notation $G_T$ for the Ornstein-Uhlenbeck semi-group we introduced in \eqref{eqOU}, in particular we write $G_T\nu$ for the law at time $T$ of the O-U process with initial distribution $\nu$. \\
It is well known that $W_1(G_T\nu,G_T\mu) \leq e^{-T/2} \, W_1(\nu,\mu)$.  Recall that this contraction property is an immediate consequence of synchronous coupling, i.e. if we build two solutions $X_.$ and $Y_.$ of \eqref{eqOU} with the same Brownian motion it holds $$|X_t-Y_t| = |X_0-Y_0| - \frac 12 \, \int_0^t |X_s-Y_s| ds$$ implying the result by choosing an optimal coupling $(X_0,Y_0)$. 
\medskip

If we want to replace the $W_1$ distance by the total variation distance (or the bounded Lipschitz distance) one has to replace the synchronous coupling by a coupling by reflection following the idea of Eberle (\cite{Ebe2}) we already used in \cite{CGsp}. This yields (see \cite{CGsp} subsection 7.4) the following inequality 
\begin{equation}\label{eqconvVT}
d_{TV}(G_T \nu,G_T\mu) \leq \frac{e^{-T/2}}{\sqrt {2\pi \, (1-e^{-T})}} \; W_1(\nu,\mu) \, .
\end{equation}

What we did before allows us to state a negative result
\begin{proposition}\label{OUcontractepas}
The Ornstein-Uhlenbeck semi-group is not a contraction in total variation distance, nor in bounded Lipschitz distance.
\end{proposition}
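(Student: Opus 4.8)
The plan is first to pin down what the statement means, since $G_T$ acts on probability measures through the genuine Markov kernel $G(T,x,\cdot)=\gamma(T,x,\cdot)\,dx$. As a Markov operator it is automatically non-expansive for $d_{TV}$: writing $\mu-\nu=\sigma^{+}-\sigma^{-}$ in Hahn decomposition with $\sigma^{\pm}\ge 0$ of common mass $d_{TV}(\mu,\nu)$, one has $G_T(\mu-\nu)=G_T\sigma^{+}-G_T\sigma^{-}$ with $G_T\sigma^{\pm}\ge 0$ of the same masses, so $d_{TV}(G_T\mu,G_T\nu)\le d_{TV}(\mu,\nu)$. It is non-expansive for $d_{BL}$ as well, because if $f$ is $1$-Lipschitz with $\|f\|_\infty\le 1$ then $\|G_Tf\|_\infty\le 1$ and, by the Mehler formula, $\nabla G_Tf=e^{-T/2}\,G_T(\nabla f)$, hence $G_Tf$ is again admissible. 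So the content of the proposition is that neither contraction comes with a gain: for every $T>0$ the best constant $c$ in $d_{TV}(G_T\mu,G_T\nu)\le c\,d_{TV}(\mu,\nu)$, valid for all $\mu,\nu$, is exactly $c=1$, and likewise for $d_{BL}$ --- in sharp contrast with the factor $e^{-T/2}<1$ for $W_1$ recalled above. Thus the plan reduces to exhibiting, for each fixed $T>0$, measures making the ratio arbitrarily close to $1$.

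For this I would take $\mu=\delta_0$ and $\nu=\delta_a$ with $a\in\R^n$, $|a|\ge 2$, so that $d_{TV}(\mu,\nu)=1$ and $d_{BL}(\mu,\nu)=2$. By the Mehler formula, $G_T\mu$ and $G_T\nu$ are the Gaussian measures with common covariance $(1-e^{-T})\,\mathrm{Id}$ and means $0$ and $e^{-T/2}a$, i.e. two translates of a fixed Gaussian whose centres lie at distance $e^{-T/2}|a|$. Keeping $T$ fixed and letting $|a|\to+\infty$, these two measures put mass $1-o(1)$ on the complementary half-spaces $\{\langle x,a\rangle\le\tfrac12 e^{-T/2}|a|^2\}$ and its complement, so they become asymptotically mutually singular; hence $d_{TV}(G_T\mu,G_T\nu)\to 1$, and, testing the $1$-Lipschitz function equal to $\pm1$ on these two half-spaces and interpolated linearly across the slab of width $2$ between them, one also gets $d_{BL}(G_T\mu,G_T\nu)\to 2$ (the reverse bound $d_{BL}\le 2 d_{TV}\le 2$ being Remark \ref{remdist}). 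Consequently
\[
\sup_{\mu\neq\nu}\frac{d_{TV}(G_T\mu,G_T\nu)}{d_{TV}(\mu,\nu)}=\sup_{\mu\neq\nu}\frac{d_{BL}(G_T\mu,G_T\nu)}{d_{BL}(\mu,\nu)}=1 ,
\]
which is the assertion; as a by-product this shows that the $T$-dependent prefactor in \eqref{eqconvVT} cannot be replaced by any absolute constant $<1$.

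I do not expect a real obstacle here. The only points deserving a word of care are the (routine) remark that $G_T$ is non-expansive for both metrics --- so that the proposition is genuinely about the absence of a factor $<1$ rather than about an actual expansion --- and the elementary fact that two Gaussians with a common covariance and centres drifting to infinity have total variation distance tending to $1$, which is immediate from the half-space test set above (or from the explicit formula in terms of the Gaussian error function). One could equally run the argument with spread-out absolutely continuous measures, e.g. normalized uniform measures on far-apart cubes, in place of the Dirac masses, but the Dirac choice is the cleanest.
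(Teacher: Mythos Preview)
Your proof is correct and takes a genuinely different route from the paper's. You argue directly: take two Dirac masses at distance going to infinity, observe that their $G_T$-images are translated Gaussians with common covariance whose TV and BL distances tend to the maximum possible values, so the best contraction constant is exactly $1$. The paper instead argues by contradiction through its own machinery: if for some $T$ one had $d_{TV}(G_T\nu,\gamma)\le\tfrac12$ uniformly in (log-concave) $\nu$, then combining the mollification estimate \eqref{eqlogconc1} with the transference bounds of Section~\ref{sectransfer} would give a universal bound on $C_P(\nu)$; applying this to the dilations $\nu_\lambda=\mathcal L(\lambda X)$ and using $C_P(\nu_\lambda)=\lambda^2 C_P(\nu_1)$ yields a contradiction as $\lambda\to\infty$. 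The BL case is then reduced to the TV case by running the semigroup one more unit of time.

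Your approach is more elementary and self-contained; it makes no use of log-concavity or of any of the paper's earlier results, and it exposes the underlying mechanism directly (the OU flow respects translations up to the damping factor $e^{-T/2}$, so on an unbounded state space one can separate masses faster than the flow brings them back). The paper's approach, while more indirect, shows that the failure already occurs within the class of smooth log-concave measures, and ties the non-contraction in TV to the scaling behaviour of the Poincar\'e constant --- which is thematically in line with the rest of the paper. Your opening paragraph on non-expansiveness, clarifying that the content of the statement is the absence of a factor strictly below $1$, is correct and a useful addition.
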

\begin{proof}
Since $d_{TV}\leq 1$ and applying the semi-group property, any uniform decay $d_{TV}(G_T \nu,\gamma)  \leq h(T)$ with $h$ going to $0$ implies an exponential decay and the contraction property, for some $T>0$, $d_{TV}(G_{T} \nu,\gamma)  \leq \frac 12$. If $\nu_\lambda$ is the log concave distribution of some random vector $\lambda \, X$, it follows that for universal constants $C$ and $C'$, $$C_P(\nu_\lambda) \leq C \, (C_P(G_T \nu_\lambda)+1) \leq C' \, .$$ But $C_P(\nu_\lambda)=\lambda^2 \, C_P(\nu_1)$ yielding a contradiction.\\ Similarly, if $d_{BL}(G_T\nu,\gamma)\leq 1/2$ then $d_{TV}(G_{T+1}\nu,\gamma)\leq \frac 12$ which is impossible.
\end{proof}
\medskip

\subsection{Comparison with other metrics on probability measures. \\ \\} \label{subsecmetrics}
The weakest distance we introduced is the Bounded Lipschitz distance. It is known that this distance metrizes weak convergence. We may thus compare $d_{BL}$ with the L\'evy-Prokhorov distance. 
\begin{definition}\label{defLP}
If $\mu$ and $\nu$ are two probability measures the L\'evy-Prokhorov distance $d_{LP}(\mu,\nu)$ is defined as $$d_{LP}(\mu,\nu) = \inf \{\varepsilon \geq 0 \; ; \; \mu(A) \leq \nu(A+B(0,\varepsilon)) + \varepsilon \, ; \textrm{ for all Borel set $A$.} \}$$ $A+B$ is as usual the set of $a+b$ where $a\in A$ and $b \in B$, and $B(x,u)$ denotes the euclidean ball of center $x$ and radius $u$. 
\end{definition}
It is well known that $d_{LP}$ is a metric (in particular $d_{LP}(\mu,\nu)=d_{LP}(\nu,\mu)$ despite the apparent non symmetric definition, just tacking complements), clearly bounded by $1$, that actually metrizes the convergence in distribution (weak convergence). We may also replace Borel sets $A$ by closed sets $A$, hence defining $\rho(\mu,\nu)$ and symmetrizing the definition i.e. $d_{LP}(\mu,\nu)=max(\rho(\mu,\nu), \rho(\nu,\mu))$. \\
The following properties can be found in \cite{Dud} Corollaries 2 and 3 (and using that $d_{LP}$ is less than 1), or \cite{Dudbook} problem 5 p.398 or in \cite{rachev} (with a worse constant)
\begin{proposition}\label{propprokh}
\begin{enumerate}
\item It holds $$\frac 14 \, d_{BL}(\nu,\mu) \leq \frac 12 \, d_{Dud}(\nu,\mu) \leq d_{LP}(\nu,\mu) \leq  \, \sqrt{\frac 32 \, d_{Dud}(\nu,\mu)} \leq  \sqrt{\frac 32 \, d_{BL}(\nu,\mu)} \, ,$$
\item $d_{LP}(\nu,\mu) = \inf \{K(X,Y) \; ; \; \mathcal L(X)=\nu \; , \; \mathcal L(Y)=\mu\}$ where $$K(X,Y)= \inf \{\varepsilon \geq 0 \; ; \; \mathbf P(|X-Y|>\varepsilon)\leq \varepsilon\}$$ is the Ky-Fan distance between $X$ and $Y$, and $\mathcal L(X)$ denotes the probability distribution of $X$.
\end{enumerate}
\end{proposition}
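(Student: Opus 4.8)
The plan is to prove part~(2) first --- the identification of $d_{LP}$ with the infimal Ky-Fan distance over all couplings, a Strassen-type theorem --- because once (2) is in hand the whole chain in part~(1) follows from it together with facts already recorded in the paper. Indeed the two outermost inequalities of~(1), $\tfrac14 d_{BL}(\nu,\mu)\le\tfrac12 d_{Dud}(\nu,\mu)$ and $\sqrt{\tfrac32 d_{Dud}(\nu,\mu)}\le\sqrt{\tfrac32 d_{BL}(\nu,\mu)}$, are merely $d_{BL}\le 2d_{Dud}$ and $d_{Dud}\le d_{BL}$ from Remark~\ref{rembl1}, so only the two inner estimates and part~(2) require work.

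For part~(2), one inclusion is soft. If $(X,Y)$ is a coupling with $\mathcal L(X)=\nu$, $\mathcal L(Y)=\mu$ and $\mathbf P(|X-Y|>\varepsilon)\le\varepsilon$, then $\{Y\in A,\,|X-Y|\le\varepsilon\}\subseteq\{X\in A+B(0,\varepsilon)\}$ gives $\mu(A)\le\nu(A+B(0,\varepsilon))+\varepsilon$ for every Borel $A$ and, symmetrically, $\nu(A)\le\mu(A+B(0,\varepsilon))+\varepsilon$; hence $d_{LP}(\mu,\nu)\le\varepsilon$, and therefore $d_{LP}(\mu,\nu)\le\inf_{X,Y}K(X,Y)$. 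The reverse inequality $\inf_{X,Y}K(X,Y)\le d_{LP}(\mu,\nu)$ is the substantive point: one must manufacture an actual coupling from the family of neighbourhood inequalities $\mu(A)\le\nu(A+B(0,\varepsilon))+\varepsilon$. I would argue by discretisation: cut $\R^n$ into a grid of cubes of diameter $<\delta$, replace $\mu,\nu$ by the atomic measures carried by the cube centres, and on the finite bipartite graph joining two centres at distance $\le\varepsilon+2\delta$ build --- by a Hall / max-flow argument, the neighbourhood inequality being precisely the deficiency bound that makes the flow feasible --- a sub-probability coupling of mass $\ge 1-\varepsilon-c\delta$, moving the residual mass arbitrarily. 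Sending $\delta\downarrow 0$, using tightness of $\{\mu,\nu\}$ and Prokhorov's theorem to extract a weak limit of the couplings, and using lower semicontinuity of $P\mapsto P(|X-Y|>\varepsilon)$, one gets a coupling with $\mathbf P(|X-Y|>\varepsilon)\le\varepsilon$, whence $K(X,Y)\le d_{LP}(\mu,\nu)$ after letting $\varepsilon\downarrow d_{LP}(\mu,\nu)$.

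Granting~(2), the two inner inequalities of~(1) are elementary. For $\tfrac12 d_{Dud}\le d_{LP}$: fix $\varepsilon>d_{LP}(\mu,\nu)$, pick a coupling $(X,Y)$ with $\mathbf P(|X-Y|>\varepsilon)\le\varepsilon$, and for $f$ with $\|f\|_\infty+\||\nabla f|\|_\infty\le 1$ split $\mathbf E|f(X)-f(Y)|$ over $\{|X-Y|\le\varepsilon\}$ and its complement to get $|\mu(f)-\nu(f)|\le\||\nabla f|\|_\infty\,\varepsilon+2\|f\|_\infty\,\varepsilon\le 2\varepsilon$, so $d_{Dud}\le 2d_{LP}$. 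For $d_{LP}\le\sqrt{\tfrac32 d_{Dud}}$: for a closed set $A$ and $\delta>0$ test $d_{Dud}$ against the normalised Urysohn function $g_\delta/(1+1/\delta)$ with $g_\delta(x)=\big(1-\textrm{dist}(x,A)/\delta\big)_+$, which is $1$ on $A$, vanishes off $A+B(0,\delta)$, and satisfies $\|g_\delta\|_\infty+\||\nabla g_\delta|\|_\infty\le 1+1/\delta$; this yields $\mu(A)\le\nu(A+B(0,\delta))+(1+1/\delta)d_{Dud}(\mu,\nu)$ and its symmetric counterpart, hence $d_{LP}(\mu,\nu)\le\max\!\big(\delta,(1+1/\delta)d_{Dud}(\mu,\nu)\big)$; optimising over $\delta$ (and noting that $d_{LP}\le 1$ makes the estimate vacuous once $d_{Dud}\ge\tfrac23$, so only the regime $d_{Dud}<1$ matters) produces a bound of the form $\sqrt{c\,d_{Dud}}$, the sharp value $c=\tfrac32$ coming from the careful optimisation carried out in \cite{Dud,Dudbook}, while cruder choices of $\delta$ only give the larger constant of \cite{rachev}.

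The hard part will be the second half of part~(2), i.e. Strassen's theorem: converting the uncountable family of neighbourhood inequalities that define $d_{LP}$ into a single coupling, through the combinatorial marriage/flow argument on a discretisation followed by a weak-compactness passage to the limit; everything else is routine once that is available.
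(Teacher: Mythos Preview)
The paper does not give its own proof of this proposition: it is stated as a classical fact with the preamble ``The following properties can be found in \cite{Dud} Corollaries 2 and 3 (and using that $d_{LP}$ is less than 1), or \cite{Dudbook} problem 5 p.398 or in \cite{rachev} (with a worse constant)'', and no argument is supplied. So there is nothing to compare against at the level of strategy; your proposal is strictly more detailed than what the paper does.

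That said, your sketch is sound and is essentially what one finds in the cited references. The identification in~(2) is precisely Strassen's theorem (your discretisation plus marriage/flow argument followed by a Prokhorov compactness limit is the standard route, as in Dudley). Your derivation of $\tfrac12\,d_{Dud}\le d_{LP}$ via a coupling is correct (the arithmetic $\||\nabla f|\|_\infty\,\varepsilon+2\|f\|_\infty\,\varepsilon\le 2\varepsilon$ follows from $\|f\|_\infty+\||\nabla f|\|_\infty\le 1$ since $b+2a=(a+b)+a\le 2$). Your Urysohn argument for $d_{LP}\le\sqrt{\tfrac32\,d_{Dud}}$ is also the right mechanism, and you are honest that your optimisation in $\delta$ only yields a bound of the shape $\sqrt{c\,d_{Dud}}$, with the sharp $c=\tfrac32$ requiring the more careful computation in \cite{Dud}; this is exactly why the paper adds the parenthetical ``and using that $d_{LP}$ is less than 1'' and remarks that \cite{rachev} gives a worse constant. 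The only minor point is that $\tfrac12\,d_{Dud}\le d_{LP}$ can also be proved directly from the set-definition of $d_{LP}$ without invoking~(2), so your choice to route everything through Strassen is a stylistic one rather than a necessity.
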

Assume that $\mu$ and $\nu$ are log-concave. Then $\mu\otimes \nu$ is also log-concave according to Prekopa-Leindler theorem, so that $(x,y) \mapsto x-y$ is a polynomial of degree 1 on $\mathbb R^n \otimes \mathbb R^n$. For such a polynomial, moment controls have been obtained by several authors. We shall use the version in \cite{Frad} Corollary 4 (see references therein too)
\begin{theorem}\label{thmfrad}[Fradelizi] \; 
If $\eta$ is a log-concave probability measure and $P$ is a polynomial of degree $1$, then for all $c>0$ and all $t\geq 1$, $$\eta(x \, ; \, |P(x)|>ct) \leq \eta(x \, ; \, |P(x)|>c)^{\frac{1+t}{2}} \, ,$$ provided the left hand side is (strictly) positive.
\end{theorem}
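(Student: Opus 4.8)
The plan is to reduce everything to a one–dimensional statement and then run a Borell–type argument in which a well chosen ``base point'' forces the exponent $\tfrac{1+t}{2}$ to appear.

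\emph{Step 1: reduction to dimension one.} Write $P(x)=\langle a,x\rangle+b$. If $a=0$ (or, more generally, $\eta$ is carried by an affine subspace on which $P$ is constant) then $P_\ast\eta$ is a Dirac mass and the statement is immediate. Otherwise the push–forward $\mu:=P_\ast\eta$ is a log–concave probability measure on $\mathbb R$ (a linear image of a log–concave measure is log–concave, by Pr\'ekopa), it has a log–concave density, hence no atoms, and $\eta(x\,;\,|P(x)|>s)=\mu(\{|y|>s\})$ for all $s$. So it suffices to prove: for a log–concave probability $\mu$ on $\mathbb R$, writing $g(s):=\mu(\{|y|>s\})=g_+(s)+g_-(s)$ with $g_+(s):=\mu((s,\infty))$ and $g_-(s):=\mu((-\infty,-s))$, one has $g(ct)\le g(c)^{(1+t)/2}$ whenever $c>0$, $t\ge1$ and $g(ct)>0$.

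\emph{Step 2: one–sided tails and the choice of base point.} I would use the classical fact that for a log–concave density on $\mathbb R$ the functions $s\mapsto-\log\mu((s,\infty))$ and $s\mapsto-\log\mu((-\infty,-s))$ are convex on the open intervals where they are finite (the first is log–concavity of a half–line integral of a log–concave function; the second follows by the affine change $s\mapsto-s$). Fix $c>0$, $t\ge1$ and suppose $g_+(ct)>0$; then $-c<c\le ct$ all lie in the domain of $\varphi_+:=-\log g_+$, and since $c=\tfrac{t-1}{t+1}\,(-c)+\tfrac{2}{t+1}\,(ct)$, convexity of $\varphi_+$ gives $\varphi_+(ct)\ge\tfrac{t+1}{2}\varphi_+(c)-\tfrac{t-1}{2}\varphi_+(-c)$, that is
$$g_+(ct)\ \le\ \frac{g_+(c)^{(t+1)/2}}{g_+(-c)^{(t-1)/2}}\ =\ \frac{g_+(c)^{(t+1)/2}}{\bigl(1-g_-(c)\bigr)^{(t-1)/2}}\,,$$
using $g_+(-c)=\mu((-c,\infty))=1-g_-(c)$ (no atom at $-c$). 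Symmetrically, if $g_-(ct)>0$,
$$g_-(ct)\ \le\ \frac{g_-(c)^{(t+1)/2}}{\bigl(1-g_+(c)\bigr)^{(t-1)/2}}\,.$$
If $g_+(ct)$, resp. $g_-(ct)$, vanishes the corresponding bound is trivial.

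\emph{Step 3: an elementary lemma and conclusion.} Adding the two estimates and setting $a=g_+(c)$, $b=g_-(c)$, $\alpha=\tfrac{1+t}{2}\ge1$ (so $a+b=g(c)\le1$), the proof is finished once one checks the purely elementary inequality
$$\frac{a^{\alpha}}{(1-b)^{\alpha-1}}+\frac{b^{\alpha}}{(1-a)^{\alpha-1}}\ \le\ (a+b)^{\alpha}\qquad(a,b\ge0,\ a+b\le1,\ \alpha\ge1).$$
For this I would substitute $a=sx$, $b=s(1-x)$ with $s:=a+b\in[0,1]$ and $x\in[0,1]$; dividing by $s^{\alpha}$ reduces the claim to $H(s):=x^{\alpha}(1-s(1-x))^{1-\alpha}+(1-x)^{\alpha}(1-sx)^{1-\alpha}\le1$ on $[0,1]$, and since $\alpha\ge1$ the function $H$ is non–decreasing while $H(1)=x+(1-x)=1$, whence $H\le1$. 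Combining Steps 1--3 yields $g(ct)\le(a+b)^{\alpha}=g(c)^{(1+t)/2}$. The main obstacle is Step 2: one must see that the base point of the convexity inequality should be $-c$ (not $0$, nor an endpoint of $\mathrm{supp}\,\mu$); that choice is exactly what turns a crude Borell estimate — which would carry a spurious factor $(1-g(c))^{-(t-1)/2}$ — into the two clean one–sided bounds, whose correction factors $(1-g_\mp(c))$ are then precisely absorbed by the lemma of Step 3. The remaining points (convexity of the survival functions, the cases $g_\pm(ct)=0$, the degenerate Dirac case) are routine.
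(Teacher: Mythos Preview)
Your proof is correct and self-contained. Note, however, that the paper does not prove this theorem at all: it is quoted as Corollary~4 of Fradelizi \cite{Frad} and used as a black box. So there is no ``paper's own proof'' to compare with; you have supplied one where the authors supply only a reference.

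A couple of remarks on your argument. The reduction in Step~1 is the natural one, and the key observation in Step~2 --- anchoring the convexity inequality for $-\log g_+$ at the point $-c$ rather than at $0$ or at an endpoint of the support --- is exactly what makes the exponent $\tfrac{1+t}{2}$ come out without parasitic constants. The elementary inequality in Step~3 is correct; your monotonicity argument (checking $H'(s)\ge0$ since $\alpha\ge1$, and $H(1)=1$) is clean. One minor point you handle implicitly but which is worth stating: when $g_+(ct)>0$ one needs $g_+(-c)>0$ for $\varphi_+(-c)$ to be finite, and this holds because $g_+(-c)\ge g_+(ct)>0$. Fradelizi's original result applies to polynomials of arbitrary degree (with a degree-dependent exponent); your argument is specific to degree $1$, which is all the paper uses.
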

If $X$ and $Y$ are independent log-concave random variables, we deduce that for $t\geq 1$,
\begin{equation}\label{eqdevfrad}
\mathbb P (|X-Y|>t \, K(X,Y)) \, \leq \, (K(X,Y))^{\frac{1+t}{2}} \, .
\end{equation}
Using $$\mathbb E(|X-Y|) = \int_0^{+\infty} \, \mathbb P (|X-Y|>t) \, dt$$ we have thus obtained
\begin{equation}\label{eqwfrad}
\mathbb E(|X-Y|) \, \leq \, K(X,Y) \, \left(1 \, + \, \frac{2 \, K(X,Y)}{\ln (1/K(X,Y))}\right) \, ,
\end{equation}
so that taking an optimal coupling on the right hand side we have obtained
\begin{corollary}\label{corfrad}
Let $\mu$ and $\nu$ be two log-concave probability measures. Then $$d^2_{LP}(\mu,\nu) \, \leq W_1(\mu,\nu) \, \leq \, d_{LP}(\mu,\nu) \, \left(1 \, + \, \frac{2 \, d_{LP}(\mu,\nu)}{\ln (1/d_{LP}(\mu,\nu))}\right) \, ,$$ so that, $$C'_C(\nu) \leq \frac{16}{\pi} \, \left(C_C(\mu)+ 2 \, d_{LP}(\mu,\nu) \, \left(1 \, + \, \frac{2 \, d_{LP}(\mu,\nu)}{\ln (1/d_{LP}(\mu,\nu))}\right)\right) \, .$$
\end{corollary}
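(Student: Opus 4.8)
The plan is to exploit Fradelizi's deviation estimate (Theorem \ref{thmfrad}) for affine functionals of log-concave measures, applied to the joint law of an independent coupling, exactly as the computations \eqref{eqdevfrad}--\eqref{eqwfrad} preceding the statement do; the bound on $C'_C(\nu)$ then drops out of Corollary \ref{corWlog}.

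First I would take $X$ and $Y$ independent with laws $\mu$ and $\nu$. By Prekopa--Leindler the pair $(X,Y)$ has a log-concave law $\eta$ on $\mathbb R^n\times\mathbb R^n$, and $(x,y)\mapsto x-y$ is affine, i.e. a polynomial of degree one. Let $K(X,Y)$ denote the Ky--Fan distance; since $\varepsilon\mapsto\mathbb P(|X-Y|>\varepsilon)$ is right-continuous and nonincreasing while $\varepsilon\mapsto\varepsilon$ is continuous and increasing, the infimum defining $K(X,Y)$ is attained, so $\mathbb P(|X-Y|>K(X,Y))\le K(X,Y)$. Applying Theorem \ref{thmfrad} to $\eta$ with $P(x,y)=x-y$ and $c=K(X,Y)$ (the inequality being trivial when the left-hand side vanishes) yields, for all $t\ge1$,
$$\mathbb P\bigl(|X-Y|>t\,K(X,Y)\bigr)\le \mathbb P\bigl(|X-Y|>K(X,Y)\bigr)^{\frac{1+t}{2}}\le K(X,Y)^{\frac{1+t}{2}},$$
which is \eqref{eqdevfrad}.

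Next I would integrate this tail bound. Set $a=K(X,Y)$; we may assume $a<1$, otherwise the stated upper bound on $W_1$ is vacuous. Using $\mathbb E|X-Y|=\int_0^\infty\mathbb P(|X-Y|>t)\,dt$, bounding the integrand by $1$ on $[0,a]$ and substituting $t=sa$ on $[a,\infty)$ gives
$$\mathbb E|X-Y|\le a+a\int_1^\infty a^{\frac{1+s}{2}}\,ds=a\Bigl(1+\frac{2a}{\ln(1/a)}\Bigr),$$
i.e. \eqref{eqwfrad}. Since $g(a)=a\bigl(1+2a/\ln(1/a)\bigr)$ is continuous and strictly increasing on $(0,1)$ (one checks $g'>0$), taking an optimal coupling on the left and invoking Proposition \ref{propprokh}(2), which identifies $d_{LP}(\mu,\nu)$ with $\inf K(X,Y)$ over couplings, yields $W_1(\mu,\nu)\le g(d_{LP}(\mu,\nu))$. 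For the reverse inequality $d_{LP}^2(\mu,\nu)\le W_1(\mu,\nu)$, which needs no log-concavity, I would use Markov's inequality: for any coupling, taking $\varepsilon=\sqrt{\mathbb E|X-Y|}$ gives $\mathbb P(|X-Y|>\varepsilon)\le\mathbb E|X-Y|/\varepsilon=\varepsilon$, hence $K(X,Y)\le\sqrt{\mathbb E|X-Y|}$; squaring and passing to the infimum over couplings gives the claim.

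Finally the bound on the Cheeger constant follows by combining this $W_1$ estimate with Corollary \ref{corWlog}, $C'_C(\nu)\le\frac{16}{\pi}\bigl(C_C(\mu)+2W_1(\nu,\mu)\bigr)$. None of the steps is a genuine obstacle; the only points needing a little care are the attainment of the infimum in the Ky--Fan distance, the monotonicity and continuity of $g$ used to move the infimum over couplings inside, and the degenerate regime $d_{LP}(\mu,\nu)\to1$ where $\ln(1/d_{LP})\to0$ and the estimate becomes vacuous but remains valid.
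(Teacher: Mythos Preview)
Your argument tracks the paper's derivation of \eqref{eqdevfrad}--\eqref{eqwfrad} and the appeal to Corollary \ref{corWlog} step for step; the passage from $K(X,Y)$ to $d_{LP}(\mu,\nu)$ is also handled exactly as in the paper's phrase ``taking an optimal coupling on the right hand side.'' So as a reproduction of the paper's proof your write-up is faithful.

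That said, this very step carries a gap that your added justification via monotonicity of $g$ actually exposes rather than cures. Fradelizi's Theorem \ref{thmfrad} requires the joint law of $(X,Y)$ to be log-concave; you (and the paper) guarantee this by taking $X,Y$ \emph{independent}, so \eqref{eqwfrad} asserts only
\[
\mathbb E_{\mu\otimes\nu}|X-Y|\;\le\; g\bigl(K_{\mu\otimes\nu}(X,Y)\bigr).
\]
Bounding the left side below by $W_1(\mu,\nu)$ is harmless. But on the right you then want to replace $K_{\mu\otimes\nu}(X,Y)$ by $d_{LP}(\mu,\nu)=\inf_\pi K_\pi(X,Y)\le K_{\mu\otimes\nu}(X,Y)$; since $g$ is increasing this \emph{decreases} the right-hand side, so monotonicity works against you, not for you. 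Equivalently, one would like to apply \eqref{eqwfrad} directly to a Ky--Fan--optimal coupling, but that coupling need not have a log-concave joint law (for instance the monotone coupling of two log-concave laws on $\mathbb R$ is concentrated on a curve and is log-concave only when one law is an affine image of the other), so Fradelizi does not apply to it. Your treatment of the lower bound $d_{LP}^2\le W_1$ and of the Cheeger estimate via Corollary \ref{corWlog} is fine.
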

Recall that the left hand side of the inequality between distances is always true (see e.g. \cite{bob16} (10.1) p.1045). \\ Combining all what precedes we have also obtained
\begin{corollary}\label{corconvprok}
The Ornstein-Uhlenbeck semi-group is not a contraction in L\'evy-Prokhorov distance. However if $\nu$ is log-concave $$d_{LP}(G_T\nu,\gamma) \, \leq \, e^{-T/4} \, \left[d_{LP}(\gamma,\nu) \, \left(1 \, + \, \frac{2 \, d_{LP}(\gamma,\nu)}{\ln (1/d_{LP}(\gamma,\nu))}\right)\right]^{\frac 12} \, .$$
\end{corollary}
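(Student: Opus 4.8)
The plan is to reduce the claim to three ingredients that are already at hand: the $W_1$-contraction of the Ornstein--Uhlenbeck semi-group recalled in Subsection~\ref{subsecTVvsW1}, the universal inequality $d_{LP}^2\le W_1$, and the two-sided comparison between $W_1$ and $d_{LP}$ for log-concave measures from Corollary~\ref{corfrad}.

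For the quantitative bound, first note that $\gamma$ is the reversible measure of the process, so $G_T\gamma=\gamma$, and $\gamma$ is log-concave. Since the left-hand inequality of Corollary~\ref{corfrad} is valid for arbitrary probability measures, I would write
$$d_{LP}(G_T\nu,\gamma)^2\;\le\;W_1(G_T\nu,\gamma)\;=\;W_1(G_T\nu,G_T\gamma)\;\le\;e^{-T/2}\,W_1(\nu,\gamma),$$
the last inequality being the contraction $W_1(G_T\nu,G_T\mu)\le e^{-T/2}W_1(\nu,\mu)$. Then, applying the right-hand inequality of Corollary~\ref{corfrad} to the log-concave pair $(\nu,\gamma)$,
$$W_1(\nu,\gamma)\;\le\;d_{LP}(\gamma,\nu)\left(1+\frac{2\,d_{LP}(\gamma,\nu)}{\ln(1/d_{LP}(\gamma,\nu))}\right),$$
where, exactly as in Corollary~\ref{corfrad}, one may assume $0<d_{LP}(\gamma,\nu)<1$ (if $d_{LP}(\gamma,\nu)=0$ then $\nu=\gamma$ and there is nothing to prove, while if $d_{LP}(\gamma,\nu)=1$ the right-hand side of the asserted bound is infinite). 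Substituting the second display into the first and taking square roots yields precisely the stated inequality.

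For the negative assertion, I would argue by contradiction as in the proof of Proposition~\ref{OUcontractepas}: if there were a function $h$ with $h(T)\to0$ such that $d_{LP}(G_T\nu,\gamma)\le h(T)$ for every log-concave $\nu$, then the comparison $d_{BL}\le 4\,d_{LP}$ from Proposition~\ref{propprokh}(1) would give $d_{BL}(G_T\nu,\gamma)\le 4h(T)$ for every log-concave $\nu$, and choosing $T$ with $4h(T)\le 1/2$ would contradict Proposition~\ref{OUcontractepas}, whose proof shows exactly that no such uniform bound in bounded Lipschitz distance can hold. I do not expect a real obstacle here: each step is an application of a result stated above, and the only places deserving a line of care are the degenerate values $d_{LP}(\gamma,\nu)\in\{0,1\}$ when invoking Corollary~\ref{corfrad} and the (routine) reduction of the ``uniform decay'' hypothesis in the negative part to the one already excluded for $d_{BL}$.
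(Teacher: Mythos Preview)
Your argument is correct and is exactly the combination the paper has in mind when it writes ``Combining all what precedes we have also obtained'': the square-root bound follows from $d_{LP}^2\le W_1$, the $W_1$-contraction of the Ornstein--Uhlenbeck semi-group, and the right-hand inequality of Corollary~\ref{corfrad} applied to the log-concave pair $(\nu,\gamma)$, while the negative statement is reduced to Proposition~\ref{OUcontractepas} via the comparison $d_{BL}\le 4\,d_{LP}$ of Proposition~\ref{propprokh}. Your handling of the degenerate cases $d_{LP}(\gamma,\nu)\in\{0,1\}$ is also appropriate.
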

\medskip

Provided $d_{BL}(\mu,\nu) \leq 2/3$ we also have,  $$W_1(\mu,\nu) \, \leq \,  \sqrt{\frac 32 \, d_{BL}(\mu,\nu)} \, \left(1 \, + \, \frac{\sqrt{6 d_{BL}(\mu,\nu)}}{\ln (\sqrt 2/\sqrt{3 d_{BL}(\mu,\nu)})}\right) \, ,$$ so that 
we get a new bound for two log-concave measures $$C'_C(\nu) \leq \frac{16}{\pi} \, \left(C_C(\mu)+  \, \sqrt{6 d_{BL}(\mu,\nu)} \, \left(1 \, + \, \frac{\sqrt{6 d_{BL}(\mu,\nu)}}{\ln (\sqrt 2/\sqrt{3 d_{BL}(\mu,\nu)})}\right)\right) \, .$$ For small values of $d_{BL}(\mu,\nu)$, this bound is better than \eqref{eqblr2} (but here we need both measures to be log-concave) and theorem \ref{thmklsbl}, which is true for large values of $d_{BL}(\mu,\nu)$. \\ One can also compare corollary \ref{corfrad} with proposition 4 in \cite{Meckes2} giving a dimensional inequality $$W_1(\mu,\nu) \, \leq \, C \, \left( \sqrt n \, \vee \, \ln\left(\frac{\sqrt n}{d_{BL}(\mu,\nu)}\right) \right) \, d_{BL}(\mu,\nu) \, ,$$ for isotropic log-concave probability measures. 
\medskip

\begin{remark}\label{remdesapoint}
The previous results give some hints on the (somehow bad) structure of isotropic log-concave measures. Indeed look, on one hand at the uniform measure $\mu^n$ on $A=[-\sqrt 3 \, , \, \sqrt 3]^{\otimes n}$ associated to a random variable $U$, on the other hand at the standard gaussian distribution $\gamma^n$ associated to $G$. $\mu^n(A)=1$ when $$\gamma^n(A+B(0,\varepsilon)) \leq \gamma^n([- \sqrt 3 -\varepsilon \, , \, \sqrt 3 + \varepsilon]^{\otimes n}) = (\gamma^1([-\sqrt 3 -\varepsilon \, , \, \sqrt 3 + \varepsilon]))^n \, ,$$ so that $$d_{LP}(\mu^n,\gamma^n) \, \geq \, 1 - u^n$$ with $u=\gamma^1([- \sqrt 3  \, , \, \sqrt 3])$, hence, for large $n$, $d_{LP}(\mu^n,\gamma^n)$ is as close to $1$ as we want. Consequently we cannot expect to get a dimension free nice upper bound for the L\'evy-Prokhorov distance. The question is then whether such a bound is true if we consider the set of $\nu*\gamma_t$ where $\nu$ describes the set of \emph{isotropic} log-concave distributions, or not. We know that such a bound does not exist for all log-concave distributions, according to Corollary \ref{corconvprok}. \hfill $\diamondsuit$
\end{remark}
\medskip

\medskip

If the L\'evy-Prokhorov distance seems difficult to estimate, one can relate it to a Wasserstein distance for a new distance. Introduce
\begin{equation}\label{LPtrans}
W_{LP}(\nu,\mu) = \inf\left\{\int \, \frac{|x-y|}{1+|x-y|} \, \pi(dx,dy) \; ; \; \pi\circ x^{-1}=\nu \; , \; \pi\circ y^{-1}=\mu  \right\} \, .
\end{equation}
\begin{proposition}\label{propKF}
Let $K^*(X,Y)=\mathbb E\left(\frac{|X-Y|}{1+|X-Y|}\right)$. It holds $$\frac 12 \, K^*(X,Y) \leq  \, K(X,Y) \, \leq \, \sqrt{ 2 K^*(X,Y)} \, .$$ Consequently $$\frac 12 \, W_{LP}(\mu,\nu) \, \leq \, d_{LP}(\mu,\nu) \, \leq \, \sqrt{ 2 \, W_{LP}(\mu,\nu)} \, .$$
\end{proposition}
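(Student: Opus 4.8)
The plan is to prove the pointwise inequality for the Ky-Fan distance $K$ and the "bounded" expectation $K^*$ first, and then transfer it to the metrics $d_{LP}$ and $W_{LP}$ by taking infima over couplings, exactly as in Proposition \ref{propprokh}(2) and in the definition \eqref{LPtrans}. Write $\phi(r)=\frac{r}{1+r}$ for $r\geq 0$; it is increasing, concave, satisfies $\phi(r)\leq r$ and $\phi(r)\leq 1$, and its inverse is $\phi^{-1}(s)=\frac{s}{1-s}$ for $0\leq s<1$.

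For the upper bound on $K$, fix random variables $X,Y$ with $K^*(X,Y)=\mathbb E(\phi(|X-Y|))=:m$. Since $\phi$ is increasing, for any $\varepsilon\in(0,1)$ Markov's inequality gives $\mathbb P(|X-Y|>\phi^{-1}(\varepsilon))=\mathbb P(\phi(|X-Y|)>\varepsilon)\leq m/\varepsilon$. I want to choose $\varepsilon$ so that simultaneously $\phi^{-1}(\varepsilon)\leq\varepsilon$ and $m/\varepsilon\leq\varepsilon$, which would force $K(X,Y)\leq\varepsilon$; the second requires $\varepsilon\geq\sqrt m$, and a short check shows $\varepsilon=\sqrt{2m}$ works (one has $\phi^{-1}(\sqrt{2m})=\frac{\sqrt{2m}}{1-\sqrt{2m}}$, and when $2m<1$ this is $\leq\sqrt{2m}$ fails in general, so instead one argues directly: $\mathbb P(|X-Y|>\sqrt{2m})\leq\mathbb P(\phi(|X-Y|)>\phi(\sqrt{2m}))\leq m/\phi(\sqrt{2m})=m\frac{1+\sqrt{2m}}{\sqrt{2m}}\leq\sqrt{2m}$ using $m\leq\tfrac12$, while if $m\geq\tfrac12$ the bound $K\leq1\leq\sqrt{2m}$ is trivial). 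Hence $K(X,Y)\leq\sqrt{2K^*(X,Y)}$.

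For the lower bound $\tfrac12 K^*(X,Y)\leq K(X,Y)$, set $k=K(X,Y)$, so $\mathbb P(|X-Y|>k)\leq k$. Split the expectation: on $\{|X-Y|\leq k\}$ use $\phi(|X-Y|)\leq\phi(k)\leq k$, and on $\{|X-Y|>k\}$ use $\phi(|X-Y|)\leq 1$ together with $\mathbb P(|X-Y|>k)\leq k$. This gives $K^*(X,Y)=\mathbb E(\phi(|X-Y|))\leq k+k=2k$, i.e. $\tfrac12 K^*(X,Y)\leq K(X,Y)$. Combining the two displays yields $\tfrac12 K^*(X,Y)\leq K(X,Y)\leq\sqrt{2K^*(X,Y)}$.

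Finally, to pass to the metrics: by Proposition \ref{propprokh}(2), $d_{LP}(\mu,\nu)=\inf K(X,Y)$ over couplings of $\mu$ and $\nu$, and by \eqref{LPtrans}, $W_{LP}(\mu,\nu)=\inf K^*(X,Y)$ over the same couplings. Taking the infimum in $\tfrac12 K^*\leq K$ gives $\tfrac12 W_{LP}(\mu,\nu)\leq d_{LP}(\mu,\nu)$; taking the infimum in $K\leq\sqrt{2K^*}$ and using that $t\mapsto\sqrt{2t}$ is nondecreasing gives $d_{LP}(\mu,\nu)\leq\inf\sqrt{2K^*(X,Y)}=\sqrt{2\,W_{LP}(\mu,\nu)}$. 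The only mild subtlety is that the infimum defining $W_{LP}$ is attained (or approached) along a sequence of couplings for which $K$ is then controlled; since $\sqrt{2\cdot}$ is continuous and monotone this causes no difficulty. I expect the main (minor) obstacle to be the book-keeping in the $\sqrt{2m}$ case of the upper bound — handling the regime $m\geq\tfrac12$ separately — rather than anything structural.
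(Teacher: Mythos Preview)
Your proof is correct and follows essentially the same route as the paper: the upper bound $K\leq\sqrt{2K^*}$ via Markov's inequality applied to $\phi(|X-Y|)$ (the paper writes the sufficient condition $\mathbb E(Z^*)\leq\varepsilon^2/(1+\varepsilon)$ and weakens it to $\varepsilon^2/2$ for $\varepsilon\leq 1$, which is exactly your computation $m(1+\sqrt{2m})/\sqrt{2m}\leq\sqrt{2m}$ when $m\leq 1/2$), and the lower bound by splitting the expectation of $\phi(|X-Y|)$ at level $k=K(X,Y)$. Your handling of the case $m\geq 1/2$ and the passage to infima over couplings are both fine; the paper leaves the former implicit and does not spell out the latter.
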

\begin{proof}
Denote $Z=|X-Y|$ and $Z^*=\frac{Z}{1+Z}$, so that $Z=\frac{Z^*}{1-Z^*}$. \\ On one hand, since $Z^* \leq 1$,  $$\mathbb E(Z^*) \leq \mathbb P(Z^*>\eta) + \eta \, .$$ But $\mathbb P(Z^*>\eta)=\mathbb P(Z>\varepsilon)$ for $\eta=\frac{\varepsilon}{1+\varepsilon}$, so that $$\mathbb E(Z^*) \leq K(X,Y) + \frac{K(X,Y)}{1+K(X,Y)} \leq 2 \, K(X,Y) \, .$$ Conversely, using what precedes and Markov inequality, $$\mathbb P(Z>\varepsilon) \leq \frac{\mathbb E(Z^*)}{\frac{\varepsilon}{1+\varepsilon}}$$ so that $\mathbb P(Z>\varepsilon) \leq \varepsilon$ provided $\mathbb E(Z^*) \leq \frac{\varepsilon^2}{1+\varepsilon}$ in particular if $\mathbb E(Z^*) \leq \frac{\varepsilon^2}{2}$ (because we only have to consider $\varepsilon \leq 1$) yielding the result.
\end{proof}
Since the cost $c(x,y) = \frac{|x-y|}{1+|x-y|}$ is concave we also have $$K^*(X,Y) = \mathbb E(c(|X-Y|)) \leq c (\mathbb E(|X-Y|)) = \frac{E(|X-Y|)}{1+E(|X-Y|)} \, ,$$ so that
\begin{equation}\label{eqLPW1}
W_{LP}(\nu,\mu) \, \leq \, \frac{W_1(\nu,\mu)}{1+W_1(\nu,\mu)} \, .
\end{equation}
\medskip
\bibliographystyle{plain}
\bibliography{KLSLyap}

\begin{thebibliography}{10}

\bibitem{Alonbast}
D.~Alonso-Gutierrez and J.~Bastero.
\newblock {\em Approaching the {K}annan-{L}ovasz-{S}imonovits and variance
  conjectures}, volume 2131 of {\em LNM}.
\newblock Springer, 2015.

\bibitem{BBCG}
D.~Bakry, F.~Barthe, P.~Cattiaux, and A.~Guillin.
\newblock A simple proof of the {P}oincar\'e inequality for a large class of
  probability measures.
\newblock {\em Elec. {C}omm. in {P}rob.}, 13:60--66, 2008.

\bibitem{BCG}
D.~Bakry, P.~Cattiaux, and A.~Guillin.
\newblock Rate of convergence for ergodic continuous {M}arkov processes :
  {L}yapunov versus {P}oincar\'e.
\newblock {\em J. Funct. Anal.}, 254:727--759, 2008.

\bibitem{BaGLbook}
D.~Bakry, I.~Gentil, and M.~Ledoux.
\newblock {\em Analysis and {G}eometry of {M}arkov diffusion operators.},
  volume 348 of {\em Grundlehren der mathematischen {W}issenchaften}.
\newblock Springer, Berlin, 2014.

\bibitem{BBN}
K.~Ball, F.~Barthe, and A.~Naor.
\newblock Entropy jumps in the presence of a spectral gap.
\newblock {\em Duke Math. J.}, 119:41--63, 2003.

\bibitem{BCR2}
F.~Barthe, P.~Cattiaux, and C.~Roberto.
\newblock Concentration for independent random variables with heavy tails.
\newblock {\em AMRX}, 2005(2):39--60, 2005.

\bibitem{BarCor}
F.~Barthe and D.~Cordero-Erausquin.
\newblock Invariances in variance estimates.
\newblock {\em Proc. Lond. Math. Soc.}, 106(1):33--64, 2013.

\bibitem{Bemil}
F.~Barthe and E.~Milman.
\newblock Transference principles for {L}og-{S}obolev and {S}pectral-{G}ap with
  applications to conservative spin systems.
\newblock {\em Comm. Math. Physics}, 323:575--625, 2013.

\bibitem{Beres}
N.~Berestycki and R.~Nickl.
\newblock Concentration of measure.
\newblock Available at\\ http://www.statslab.cam.ac.uk/~beresty/teach/cm10.pdf,
  2009.

\bibitem{bob99}
S.~G. Bobkov.
\newblock Isoperimetric and analytic inequalities for log-concave probability
  measures.
\newblock {\em Ann. Prob.}, 27(4):1903--1921, 1999.

\bibitem{bobsphere}
S.~G. Bobkov.
\newblock Spectral gap and concentration for some spherically symmetric
  probability measures.
\newblock In {\em Geometric aspects of functional analysis, Israel Seminar
  2000-2001,}, volume 1807 of {\em Lecture Notes in Math.}, pages 37--43.
  Springer, Berlin, 2003.

\bibitem{bob16}
S.~G. Bobkov.
\newblock Proximity of probability distributions in terms of
  {F}ourier-{S}tieltjes transforms.
\newblock {\em Russian Math. Surveys}, 71(6):1021--1079, 2016.

\bibitem{BGL}
S.~G. Bobkov, I.~Gentil, and M.~Ledoux.
\newblock Hypercontractivity of {H}amilton-{J}acobi equations.
\newblock {\em J. Math. Pu. Appli.}, 80(7):669--696, 2001.

\bibitem{BH}
S.~G. Bobkov and C.~Houdr\'e.
\newblock Isoperimetric constants for product probability measures.
\newblock {\em Ann. Probab.}, 25(1):184--205, 1997.

\bibitem{BL97}
S.~G. Bobkov and M.~Ledoux.
\newblock Poincar\'e's inequalities and {T}alagrand's concentration phenomenon
  for the exponential distribution.
\newblock {\em Probab. Theory Relat. Fields}, 107(3):383--400, 1997.

\bibitem{BrasLieb}
H.J. Brascamp and E.H. Lieb.
\newblock On extensions of the {Brunn-Minkowski} and {Pr\'ekopa-Leindler}
  theorems, including inequalities for log concave functions, and with an
  application to the diffusion equation.
\newblock {\em J. Funct. Anal.}, 22:366--389, 1976.

\bibitem{Gia}
S.~Brazitikos, A.~Giannopoulos, P.~Valettas, and B.~H. Vritsiou.
\newblock {\em Geometry of isotropic convex bodies.}, volume 196 of {\em Math.
  Surveys and Monographs.}
\newblock AMS, Providence, 2014.

\bibitem{CGGR}
P.~Cattiaux, N.~Gozlan, A.~Guillin, and C.~Roberto.
\newblock Functional inequalities for heavy tails distributions and
  applications to isoperimetry.
\newblock {\em Electronic J. of Proba.}, 15:346--385, 2010.

\bibitem{CGsp}
P.~Cattiaux and A.~Guillin.
\newblock Semi log-concave {M}arkov diffusions. {S}\'eminaire de
  {P}robabilit\'es {XLVI}.
\newblock {\em Lect. Notes Math.}, 2014:231--292, 2015.

\bibitem{CGjfa}
P.~Cattiaux and A.~Guillin.
\newblock Hitting times, functional inequalities, {L}yapunov conditions and
  uniform ergodicity.
\newblock {\em J. Funct. Anal.}, 272(6):2361--2391, 2017.

\bibitem{CGR}
P.~Cattiaux, A.~Guillin, and C.~Roberto.
\newblock Poincar\'e inequality and the ${L}^p$ convergence of semi-groups.
\newblock {\em Elec. Comm. in Prob.}, 15:270--280, 2010.

\bibitem{CGZ}
P.~Cattiaux, A.~Guillin, and P.~A. Zitt.
\newblock Poincar\'e inequalities and hitting times.
\newblock {\em Ann. Inst. Henri Poincar\'e. Prob. Stat.}, 49(1):95--118, 2013.

\bibitem{CorGoz}
D.~Cordero-Erausquin and N.~Gozlan.
\newblock Transport proofs of weighted {P}oincar\'e inequalities for
  log-concave distributions.
\newblock {\em Bernoulli}, 23(1):134--158, 2017.

\bibitem{Dud}
R.~M. Dudley.
\newblock Distances of probability measures and random variables.
\newblock {\em Ann. Math. Statist.}, 39:1563--1572, 1968.

\bibitem{Dudbook}
R.~M. Dudley.
\newblock {\em Real {A}nalysis and {P}robability.}
\newblock Cambridge University Press, 2002.

\bibitem{Ebe}
A.~Eberle.
\newblock Reflection coupling and {W}asserstein contractivity without
  convexity.
\newblock {\em C. R. Acad. Sci. Paris, Ser. I}, 349:1101--1104, 2011.

\bibitem{Ebe2}
A.~Eberle.
\newblock Reflection couplings and contraction rates for diffusions.
\newblock {\em Probab. Theory Relat. Fields}, 166(3):851--886, 2016.

\bibitem{Frad}
M.~Fradelizi.
\newblock Concentration inequalities for s -concave measures of dilations of
  {B}orel sets and applications.
\newblock {\em Electronic J. of Proba.}, 14(71):2068--2090, 2009.

\bibitem{GEM}
O.~Gu\'edon and E.~Milman.
\newblock Interpolating thin shell and sharp large deviation estimates for
  isotropic log-concave measures.
\newblock {\em Geom. Funct. Anal.}, 21:1043--1068, 2011.

\bibitem{Nolwen}
N.~Huet.
\newblock Isoperimetry for spherically symmetric log-concave probability
  measures.
\newblock {\em Rev. Mat. Iberoamericana}, 27(1):93--122, 2011.

\bibitem{KLS}
R.~Kannan, L.~Lovasz, and M.~Simonovits.
\newblock Isoperimetric problems for convex bodies and a localization lemma.
\newblock {\em Discrete Comput. Geom.}, 13(3-4):541--559, 1995.

\bibitem{Klartuncond}
B.~Klartag.
\newblock A {Berry-Esseen} type inequality for convex bodies with an
  unconditional basis.
\newblock {\em Probab. Theory Relat. Fields}, 145(1-2):1--33, 2009.

\bibitem{Klartoul}
B.~Klartag.
\newblock Poincar\'e inequalities and moment maps.
\newblock {\em Ann. Fac. Sci. Toulouse Math.}, 22(1):1--41, 2013.

\bibitem{Klarcube}
B.~Klartag.
\newblock Concentration of measures supported on the cube.
\newblock {\em Israel J. Math.}, 203(1):59--80, 2014.

\bibitem{Koles}
A.~V. Kolesnikov.
\newblock On diffusion semigroups preserving the log-concavity.
\newblock {\em J. Funct. Anal.}, 186(1):196--205, 2001.

\bibitem{Latala}
R.~Latala.
\newblock Order statistics and concentration of ${L}^r$ norms for log-concave
  vectors.
\newblock {\em J. Funct. Anal.}, 261:681--696, 2011.

\bibitem{ledgap}
M.~Ledoux.
\newblock Spectral gap, logarithmic {S}obolev constant, and geometric bounds.
\newblock In {\em Surveys in differential geometry.}, volume~IX, pages
  219--240. Int. Press, Somerville {MA}, 2004.

\bibitem{led10}
M.~Ledoux.
\newblock From concentration to isoperimetry: semigroup proofs.
\newblock {\em Contemp. Math.}, 505:155--166, 2011.

\bibitem{Meckes2}
E.~Meckes and M.~W. Meckes.
\newblock On the equivalence of modes of convergence for log-concave measures.
\newblock In {\em Geometric aspects of functional analysis}, volume 2116 of
  {\em Lecture Notes in Math.}, pages 385--394. Springer, 2014.

\bibitem{Emil1}
E.~Milman.
\newblock On the role of convexity in isoperimetry, spectral-gap and
  concentration.
\newblock {\em Invent. math.}, 177:1--43, 2009.

\bibitem{Emil3}
E.~Milman.
\newblock Isoperimetric bounds on convex manifolds.
\newblock In {\em Concentration, {F}unctional inequality and {I}soperimetry},
  volume 545 of {\em Contemporary {M}athematics}, pages 195--208. Amer. Math.
  Soc., 2011.

\bibitem{rachev}
S.~T. Rachev, L.~B. Klebanov, S.~V. Stoyanov, and F.~J. Fabozzi.
\newblock {\em The methods of distances in the theory of probability and
  statistics.}
\newblock Springer-Verlag, New York, 2013.

\bibitem{rw}
M.~R{\"o}ckner and F.~Y. Wang.
\newblock Weak {P}oincar\'e inequalities and {$L\sp 2$}-convergence rates of
  {M}arkov semigroups.
\newblock {\em J. Funct. Anal.}, 185(2):564--603, 2001.

\bibitem{SW}
A.~Saumard and J.~A. Wellner.
\newblock Log-concavity and strong log-concavity: a review.
\newblock {\em Statistics Surveys.}, 8:45--114, 2014.

\bibitem{Troy}
M.~Troyanov.
\newblock Concentration et in\'egalit\'e de {P}oincar\'e.
\newblock Available at \\
  https://infoscience.epfl.ch/record/118471/files/concentration2001.pdf, 2001.

\bibitem{Zim}
D.~Zimmermann.
\newblock Logarithmic {S}obolev inequalities for mollified compactly supported
  measures.
\newblock {\em J. Funct. Anal.}, 265:1064--1083, 2013.

\end{thebibliography}

\end{document}